\newtheorem{theorem}{Theorem}[section]
\newtheorem{lemma}{Lemma}[section]
\newtheorem{proposition}{Proposition}[section]
\newtheorem{definition}{Definition}[section]
\newtheorem{remark}{Remark}[section]
\numberwithin{equation}{section}
\def\essinf{\mathop{\rm ess~inf}}
\def\N{\mathbb{N}}
\def\R{\mathbb{R}}
\def\Z{\mathbb{Z}}
\def\ra{\rightarrow}
\def\pa{\partial}
\def\al{\alpha}
\def\ep{\epsilon}
\begin{document}

\title[Lattice KPP equations in time heterogeneous media]{Spreading speeds and transition fronts of lattice KPP equations in time heterogeneous media}

\author{Feng Cao}
\address{Department of Mathematics, Nanjing University of Aeronautics and Astronautics, Nanjing, Jiangsu 210016, P. R. China}
\email{fcao@nuaa.edu.cn}
\thanks{Research of F. Cao was supported by the Fundamental Research Funds for the Central Universities No. NZ2014104.}

\author{Wenxian Shen}
\address{Department of Mathematics and Statistics, Auburn University, Auburn, AL 36849, USA}
\email{wenxish@auburn.edu}

\subjclass[2010]{35C07, 34K05, 34A34, 34K60}



\keywords{spreading speed intervals, transition fronts, lattice KPP equations}

\begin{abstract}
The current paper is devoted to the study of spreading speeds and transition fronts of lattice KPP equations in time heterogeneous media. We first
prove the existence, uniqueness, and stability of spatially homogeneous entire positive solutions. Next, we establish lower and upper bounds of the (generalized) spreading speed intervals. Then, by constructing appropriate  sub-solutions and super-solutions, we show
the existence and continuity of  transition fronts with given front position functions. Also, we prove the existence of some kind of critical front.

\end{abstract}

\maketitle



\section{Introduction}

The current paper deals with spatial spreading speeds and transition fronts for the following discrete Fisher-KPP equation in time heterogeneous media
\begin{equation}\label{main-eqn}
\dot{u}_{i}(t)=u_{i+1}(t)-2u_{i}(t)+u_{i-1}(t)+u_i(t)f(t,u_{i}(t)),i\in \Z,
\end{equation}
where $f(t,u)$ is of monostable type. More precisely, we  assume

\medskip

\noindent{\bf (H0)} $f(t,u)$ is locally H\"older continuous in $t\in\R$, Lipschitz continuous in $u\in\R$, and continuously differentiable in $u$ for $u\ge 0$.

\medskip
\noindent {\bf (H1)} For each $u\in\R$, $f(\cdot,u)\in L^{\infty}(\R)$;  $f(t,u)<0$ for $u\ge M_0$ and some $M_0>0$;   $f_u(t,u)<0$ for $u\ge 0$;  and
\begin{equation}
\label{assumption-eq}
\liminf_{t-s\to \infty}\frac{1}{t-s}\int_s^t f(\tau,0)d\tau>0.
\end{equation}

\medskip

\noindent {\bf (H2)} There are $0<\tilde m_0< \tilde M_0$ such that $f(t, 0)-\tilde M_0 u\le f(t,u)\le f(t,0)-\tilde m_0 u$ for $u\geq 0$.

\medskip

Let
$$
l^\infty(\Z)=\{u=\{u_i\}_{i\in \Z}:\sup \limits_{i \in\Z}|u_i|<\infty\}
$$
with norm $\|u\|=\|u\|_\infty=\sup_{i\in\Z}|u_i|$. By (H0), for any given $u^0\in l^\infty(\Z)$ and $s\in\R$, \eqref{main-eqn} has a unique (local) solution
$u(t;s,u^0)=\{u_i(t;s,u^0)\}_{i\in\Z}$ with $u(s;s,u^0)=u^0$. By (H1),  if $u^0_i\geq0$
for all $i\in \Z$, then $u(t;s,u^0)=\{u_i(t;s,u^0)\}_{i\in\Z}$ exists for all $t\ge s$ and $u_i(t;s,u^0)\geq0$ for all $i\in\Z$ and $t\geq s$ (see Proposition \ref{comparison}).

Equation (\ref{main-eqn}) is used to model the  population dynamics of species living in patchy environments in biology and ecology (see, for example, \cite{ShKa97, ShSw90}).
The following two equations are spatially continuous counterparts of \eqref{main-eqn},
\begin{equation}\label{eqn-con}
u_{t}=u_{xx}+uf(t,u)
\end{equation}
and
\begin{equation}
\label{eqn-nonlocal}
u_t(x,t)=\int_{\R}\kappa(y-x)u(y,t)dy-u(x,t)+u f(t,u),
\end{equation}
where $\kappa(\cdot)$ is a nonnegative smooth function with compact support and $\int_{\R}\kappa(z)dz=1$.
\eqref{eqn-con} is widely used to model the population dynamics of species when the movement or internal dispersal of the organisms
occurs between adjacent locations randomly in  spatially continuous media, and \eqref{eqn-nonlocal} is often used to model the population
 dynamics of species when the movement or internal dispersal of the organisms occurs between adjacent as well as non-adjacent locations
in  spatially continuous media. The dispersal described by $u\mapsto u_{xx}$ in \eqref{eqn-con} is therefore referred to as {\it random dispersal}
and the dispersal  described by $u(x,t)\mapsto \int_{\R}\kappa(y-x)u(y,t)dy-u(x,t)$ in \eqref{eqn-nonlocal} is referred to as {\it nonlocal dispersal}
in literature.

 One of the central dynamical issues about \eqref{main-eqn}, \eqref{eqn-con}, and \eqref{eqn-nonlocal}
 is to know how a solution whose initial datum  is strictly positive, or  is nonnegative and has compact support, or is a front-like function evolves as time increases.
   For example, it is important to know how a solution $u(t;s,u^0)$ of (\ref{main-eqn}) evolves as $t$ increases, where $u^0$ is strictly positive (that is, $\inf_{i\in\Z}u_i^0>0$), or $u^0$ is nonnegative (that is, $u^0_i\geq 0$ for all $i\in\Z$)  and has compact support (that is, $\{i\in \Z:u^0_i>0\}$ is a bound subset of $\Z$), or $u^0$ is nonnegative and a front-like function (that is,
$$\sup_{i\geq I}u^0_i\ra 0,~~~~\inf_{i\leq -I}u^0_i\ra u_*>0,~~~(I\ra \infty)).$$
This is strongly related to the so called spatial spreading speeds and traveling wave solutions.
Pioneering works on these issues are due to Fisher \cite{Fish37} and Kolmogorov, Petrovsky and Piscunov \cite{KPP37}. They studied the existence of traveling wave solutions of \eqref{eqn-con} when $f(u)=1-u$, that is, solutions which can be written as $u(x,t)=\phi(x-ct)$ with $\phi(-\infty)=1$, $\phi(+\infty)=0$.   Fisher in \cite{Fish37} found traveling wave solutions $u(x,t)=\phi(x-ct)$
$(\phi(-\infty)=1,\phi(\infty)=0)$ of \eqref{eqn-con} with $f(u)=1-u$ of all speeds $c\geq 2$ and showed that there are no such traveling wave
solutions of slower speed.  Kolmogorov, Petrowsky, and Piscunov in \cite{KPP37} proved that for any nonnegative
solution $u(x,t)$ of \eqref{eqn-con} with $f(u)=1-u$, if at time $t=0$, $u$ is $1$ near $-\infty$ and $0$ near
$\infty$, then $\lim_{t\to \infty}u(t,ct)$ is $0$ if $c>2$ and $1$ if $c<2$ (that is, the population invades into
the region  with no initial population  with speed
$2$). $c_*:=2$ is therefore the minimal wave speed of \eqref{eqn-con} with $f(u)=1-u$ and is also called the spatial spreading
speed of \eqref{eqn-con} with $f(u)=1-u$. Thanks to the works \cite{Fish37} and \cite{KPP37}, \eqref{main-eqn}, \eqref{eqn-con}, and \eqref{eqn-nonlocal} with $f$ satisfying
(H1) and (H2) are called Fisher or KPP type equations.

Since the pioneering works by Fisher (\cite{Fish37}) and Kolmogorov, Petrowsky, Piscunov  (\cite{KPP37}), spatial spreading speeds and
traveling wave solutions of Fisher or KPP type evolution equations  in spatially and temporally homogeneous media or spatially and/or temporally
periodic media have been widely studied.
 The reader is referred to \cite{ArWe75, ArWe78,   BeHaNa1, BeHaNa2, BeHaRo, BeNa12, FrGa, HuSh09, Kam76, KoSh, LiZh1, LiZh2, Na09,  NoRuXi, NoXi, NRRZ12, Sa76, Sh10, Sh04,  Uch78, We02}, etc.,  for the study of Fisher or KPP type equations with random dispersal in homogeneous or periodic media,
to \cite{CaCh04, CoDu05, CDM13,  RaShZh, ShSh1,  ShZh10, ShZh12-1, ShZh12-2}, etc., for the study of Fisher or KPP type equations with nonlocal dispersal
in homogeneous or periodic media,
and  to \cite{ChFuGu06, ChGu02, ChGu03, FuGuSh02, GuHa06, GuWu09, HuZi94, WuZo97, ZiHaHu93}, etc., for the study of Fisher or KPP type equations
in spatially discrete homogeneous or periodic media.

The study of spatial spreading speeds and traveling wave solutions of KPP type equations with general time and/or space dependence is
more recent. Quite a few works have been carried out  toward the spatial spreading speeds and traveling wave solutions of
KPP type equations in non-periodic heterogeneous media. For example, in \cite{Sh04}, \cite{Sh11}, notions of random traveling wave solutions and generalized
traveling wave solutions are introduced for random KPP equations and
quite general time dependent KPP equations.
In \cite{BeHa07}, \cite{BeHa12}, a notion of generalized transition waves is introduced for KPP type equations with general space and
time dependence. Among others, the authors of \cite{NaRo12} proved the existence of generalized transition waves of \eqref{eqn-con} with general time dependent
KPP nonlinearity $f(t,u)$.  In \cite{BeNa12}, the authors studied spreading speeds of general spatially heterogeneous { including space periodic}  Fisher-KPP reaction diffusion
equations  (see also \cite{BeHaNa, Fre, FrGa, We02}, etc.).
Zlatos \cite{Zl12} established the existence of generalized transition waves of   spatially inhomogeneous Fisher-KPP reaction-diffusion equations { under some specific hypotheses (see (1.2)-(1.5) in \cite{Zl12})}.
In \cite{ShSh2}, the second author of the current paper together with Zhongwei Shen proved the existence, uniqueness, and stability of generalized transition waves of \eqref{eqn-nonlocal} with  time dependent KPP nonlinearity $f(t,u)$ { under some assumptions (see (H1), (H2) in \cite{ShSh2})}. The authors of \cite{LiZl14} obtained
the existence of generalized transition waves of  spatially heterogeneous KPP equations with nonlocal dispersal { under some specific assumptions
(see (J1), (J2), (F1)-(F3), (G1), (G2) in \cite{LiZl14})}. For spatially discrete KPP equations,
the work  \cite{Sh09} studied spatial spreading speeds of \eqref{main-eqn} with time recurrent KPP nonlinearity $f(t,u)$. However, there is little study on spatial spreading speeds and traveling wave solutions of spatially discrete KPP type equations  with general time and/or space dependence.


In this paper, we are going to investigate spatial spreading speeds and generalized transition waves  for general time dependent KPP model (\ref{main-eqn}). Throughout this paper, we assume (H0)-(H2).

We first study the existence, uniqueness, and stability of spatially homogeneous entire solutions. A solution $u(t)=\{u_j(t)\}$ of \eqref{main-eqn} is called an {\it entire positive solution} if it is a solution of \eqref{main-eqn} for $t\in\R$ and $\inf_{t\in\R,i\in\Z}u_i(t)>0$. A solution $u(t)=\{u_i(t)\}$ of \eqref{main-eqn} is called {\it spatially homogeneous} if $u_i(t)=u_j(t)$ for all $i,j\in\Z$.
For given function  $t\mapsto u(t)\in l^\infty(\Z)$ and $c\in\R$, we define
$$\limsup_{|i|\ge ct, t\to\infty} u_i(t)=\limsup_{t\to\infty}\sup_{i\in\Z,|i|\ge ct} u_i(t),$$
$$\liminf_{|i|\le ct, t\to\infty} u_i(t)=\liminf_{t\to\infty}\inf_{i\in\Z,|i|\le ct} u_i(t),$$
and
$$\limsup_{|i|\le ct, t\to\infty} u_i(t)=\limsup_{t\to\infty}\sup_{i\in\Z,|i|\le ct} u_i(t).$$
 We prove

\begin{theorem}\label{stable-thm}
\begin{itemize}
\item[(1)]
There is a unique spatially homogeneous entire positive  solution $u^+(t)$ of \eqref{main-eqn} which is globally stable in the sense that for any $u^0\in l^\infty(\Z)$ with $\inf_{i\in \Z}u_i^0>0$,
$$\|u(t+s;s,u^0)-u^+(t+s)\|_\infty \to 0\quad {\rm as}\quad t\to\infty$$
uniformly in $s\in\R$.

\smallskip

\item[(2)] Let $u^0\in l^\infty(\Z)$ with $u^0_i\ge 0$ for $i\in\Z$ and $\gamma^{'}>0$ be given. If
$$
\liminf_{|i|\le \gamma^{'} t,t\to\infty} u_i(t;0,u^0)>0,
$$
then for any $0<\gamma<\gamma^{'}$,
\begin{equation}
\label{convergence-aux-eq1}
\limsup_{|i|\le \gamma t,t\to\infty} |u_i(t;0,u^0)-u^+(t)|=0.
\end{equation}
If
$$
\liminf_{s\in\R,|i|\le \gamma^{'} t, t\to\infty} u_i(t+s;s,u^0)>0,
$$
then for any $0<\gamma<\gamma^{'}$,
\begin{equation}
\label{convergence-aux-eq2}
\limsup_{|i|\le \gamma t,t\to\infty} |u_i(t+s;s,u^0)-u^+(t+s)|=0
\end{equation}
uniformly in $s\in\R$.
\end{itemize}
\end{theorem}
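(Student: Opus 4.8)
The plan is to reduce everything to the spatially homogeneous ODE $\dot a = a f(t,a)$ and to a comparison argument. First I would establish part (1). For the spatially homogeneous entire positive solution, I look for $u^+(t) = \{a(t)\}_{i\in\Z}$ where $a(\cdot)$ solves the scalar nonautonomous logistic-type ODE $\dot a(t) = a(t) f(t, a(t))$ for all $t\in\R$, with $\inf_{t\in\R} a(t) > 0$. Existence of such an entire positive solution of the ODE is standard under (H1): the assumption $\liminf_{t-s\to\infty}\frac{1}{t-s}\int_s^t f(\tau,0)\,d\tau > 0$ together with $f(t,u)<0$ for $u\ge M_0$ and $f_u(t,u)<0$ for $u\ge 0$ forces solutions starting from any positive datum to enter and remain in a compact interval $[\delta, M_0]$ with $\delta>0$, uniformly in the initial time; then a standard diagonal/compactness argument over $s\to-\infty$ produces an entire solution bounded below by $\delta$. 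Uniqueness: if $a_1, a_2$ are two such entire positive solutions, set $w = a_1 - a_2$; then $\dot w = w\,g(t)$ where $g(t) = f(t,a_1) + a_2 \frac{f(t,a_1)-f(t,a_2)}{a_1-a_2}$ (interpreting the quotient via the mean value theorem and $f_u$), and since $f_u<0$ on $[0,\infty)$ the second term is bounded above by a negative constant $-\tilde m_0 \inf a_2 < 0$ while $f(t,a_1)$ is bounded above (by (H1)/(H2)); on any interval where $w\ne 0$ this gives exponential contraction, and a limiting argument backward in time forces $w\equiv 0$. Global stability in the stated uniform-in-$s$ sense follows the same way: for $u^0$ with $\inf_i u_i^0 = \rho > 0$, the spatially constant functions $t\mapsto a_-(t)$ and $t\mapsto a_+(t)$ solving the ODE with data $\rho$ and $\|u^0\|_\infty$ at time $s$ are, by Proposition~\ref{comparison}, respectively a sub- and super-solution bracketing $u(t;s,u^0)$; both converge to $u^+$ as $t-s\to\infty$ uniformly in $s$ by the contraction estimate above, so $u(t+s;s,u^0)$ is squeezed onto $u^+(t+s)$ uniformly in $s$.

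For part (2), the idea is a localization-in-space version of the same squeeze. Fix $0 < \gamma < \gamma'$ and pick an intermediate speed $\gamma < \tilde\gamma < \gamma'$. By hypothesis there is $\rho>0$ and $T_0>0$ so that $u_i(t;0,u^0) \ge \rho$ for all $t\ge T_0$ and all $|i|\le \tilde\gamma t$. For the lower bound I construct a spatially localized sub-solution: a function of the form $\underline{v}_i(t) = a(t)\,\phi(i,t)$ or, more simply, compare on the moving region $\{|i|\le \tilde\gamma t\}$ using the comparison principle on a truncated problem — at the moving boundary $|i| = \tilde\gamma t$ the solution stays $\ge \rho$, and inside, the spatially constant sub-solution $a_-(t)$ of the ODE started at value $\rho$ at time $T_0$ lies below $u$ as long as we can control the discrete Laplacian's contribution near the boundary. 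The cleaner route is the standard one: since the region $|i|\le\tilde\gamma t$ strictly contains $|i|\le\gamma t$ with a linearly growing gap $(\tilde\gamma-\gamma)t$, and since the "bad" boundary effects of the discrete diffusion decay as one moves into the region, one shows that for any $\epsilon>0$ there is $T_\epsilon$ with $u_i(t;0,u^0)\ge a_-^{\rho}(t) - \epsilon$ for $|i|\le\gamma t$, $t\ge T_\epsilon$, where $a_-^\rho$ is the ODE solution from $\rho$; letting $t\to\infty$ and using part (1)'s stability for the ODE gives $\liminf_{|i|\le\gamma t} u_i(t;0,u^0) \ge a^+(t)$ in the appropriate sense. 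The upper bound is easier and global: $a_+(t)$, the ODE solution from $\|u^0\|_\infty$ at time $0$, dominates $u(t;0,u^0)$ everywhere by Proposition~\ref{comparison}, and $a_+(t) - u^+(t)\to 0$. Combining the two bounds yields \eqref{convergence-aux-eq1}. The uniform-in-$s$ statement \eqref{convergence-aux-eq2} is proved identically, carrying the base time $s$ through every estimate and using that all the convergence rates above (ODE contraction, decay of boundary effects) are uniform in the initial time because $f(\cdot,u)\in L^\infty(\R)$ and the constants in (H1), (H2) are time-independent.

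The main obstacle will be the lower bound in part (2): controlling the outward-flux contribution of the discrete Laplacian $u_{i+1}-2u_i+u_{i-1}$ at the moving boundary $|i|\approx\tilde\gamma t$ so that the spatially constant function $a_-^\rho(t)$ genuinely remains a sub-solution on the shrinking-relative region, despite the fact that a discrete diffusion on a truncated domain need not obey a naive maximum principle without a correction term. I expect this is handled by an explicit exponentially-decaying-in-distance-to-boundary correction $\psi_i(t) = \rho - \eta e^{-\mu(\tilde\gamma t - |i|)}$ for suitable small $\eta,\mu>0$ chosen using the KPP lower bound $f(t,u)\ge f(t,0)-\tilde M_0 u$ from (H2) and $\liminf\frac1{t-s}\int_s^t f(\tau,0)d\tau>0$ from (H1); one checks $\dot\psi_i \le \psi_{i+1}-2\psi_i+\psi_{i-1} + \psi_i f(t,\psi_i)$ directly, exploiting that for $\mu$ small the discrete Laplacian of the exponential is $O(\mu^2)$, dominated by the linear growth term. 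Once this correction sub-solution is in place, everything else is a routine application of Proposition~\ref{comparison} and the ODE stability from part (1).
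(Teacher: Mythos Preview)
Your overall architecture is reasonable, but both parts have genuine gaps, and the paper takes a different route that sidesteps them.

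\textbf{Part (1).} The contraction claim for the ODE is not justified. Writing $\dot w = w\,g(t)$ with $g(t) = f(t,a_1) + a_2\,\frac{f(t,a_1)-f(t,a_2)}{a_1-a_2}$, you assert the second term is bounded above by $-\tilde m_0\inf a_2$; but (H2) bounds $f$, not $f_u$, so no uniform negative bound on $f_u$ is available. Moreover $f(t,a_1)$ can be positive ($f(t,0)$ has no sign pointwise; only its long-time average is positive by (H1)). Hence $g$ is not bounded above by a negative constant and forward exponential contraction of $w$ fails in general. The paper instead works with the \emph{part metric} $\rho(u,v)=\inf\{\ln\alpha:\alpha^{-1}v\le u\le\alpha v\}$: Proposition~\ref{part-metric}(2) shows that for data trapped in $[\epsilon,M]$ with $\rho\ge\sigma>0$, the part metric drops by a fixed $\delta>0$ over each time interval of length $\tau$, uniformly in $s$. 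Iterating this gives both uniqueness of the entire positive solution and the uniform-in-$s$ global stability in one stroke. Your sandwich $a_-(t)\le u_i(t;s,u^0)\le a_+(t)$ is correct, but it presupposes exactly the uniform-in-$s$ ODE stability you have not established.

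\textbf{Part (2).} The upper bound via $a_+(t)$ is fine. For the lower bound, your sub-solution $\psi_i(t)=\rho-\eta e^{-\mu(\tilde\gamma t-|i|)}$ has two problems. First, it hovers near the constant $\rho$, not near $u^+(t)$, so even if it worked it would only give $\liminf u_i\ge\rho-\eta$ on the inner cone; you still need a separate step to upgrade this to convergence to $u^+$. Second, and more seriously, the sub-solution inequality $\dot\psi_i\le H\psi_i+\psi_i f(t,\psi_i)$ requires $\psi_i f(t,\psi_i)$ to dominate a positive quantity pointwise in $t$, but $f(t,\rho)$ can be negative at individual times (again, only averages are controlled), so the inequality fails at such $t$. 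The paper avoids explicit sub-solutions altogether by invoking Proposition~\ref{convergence-prop}: if $u_{0n}\to u_0$ locally uniformly with $\{\|u_{0n}\|\}$ bounded, then $u(\cdot,s+t;s,u_{0n})\to u(\cdot,s+t;s,u_0)$ locally uniformly, uniformly in $s$. Arguing by contradiction with sequences $t_n\to\infty$, $|i_n|\le\gamma t_n$, one restarts at time $s_n+t_n-\tilde T$ from the truncated constant $\tilde u^n$ (equal to $\delta_0/2$ on $|i|\le(\gamma'-\gamma)(t_n-\tilde T)$, zero outside), which lies below the shifted solution by the hypothesis; since $\tilde u^n\to\delta_0/2$ locally, the proposition gives $u_0(s_n+t_n;s_n+t_n-\tilde T,\tilde u^n)\to u_0(s_n+t_n;s_n+t_n-\tilde T,\delta_0/2)$, and the latter is within $\epsilon_0/2$ of $u^+(s_n+t_n)$ by part~(1). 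This is shorter and immune to the pointwise-in-$t$ sign obstruction that breaks your $\psi$.
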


Thanks to Theorem \ref{stable-thm} (1),  $f$ satisfying (H1) and (H2) is also said to be of {\it monostable type}.

\smallskip

Next, we investigate spatial  spreading speeds of \eqref{main-eqn}.
Let $$l^\infty_0(\Z)=\{u=\{u_i\}_{i\in\Z}\in l^\infty(\Z)\,:\, u_i\ge 0\,\, \text{for all}\,\, i\in\Z,\,\, u_i=0\,\,\text{for}\,\, |i|\gg 1,\,\, \{u_i\}\not =0\}.
$$

\begin{definition}[Spreading speed interval]
(1) Let
$$c^*=\inf\{ c\,:\, \limsup_{|i|\ge ct, t\to\infty} u_i(t;0,u^0)=0\quad \text{for all}\,\, u^0\in l^\infty_0(\Z)\},$$
$$c_*=\sup\{c\,:\, \limsup_{|i|\le ct, t\to\infty} |u_i(t;0,u^0)-u^+(t)|=0\quad \text{for all}\,\, u^0\in l^\infty_0(\Z)\}.$$
We call $[c_*,c^*]$ the {\rm spreading speed interval} of \eqref{main-eqn}.

(2) Let
$$\tilde c^*=\inf\{ c\,:\, \limsup_{|i|\ge ct, t\to\infty} u_i(t+s;s,u^0)=0\,\, \text{uniformly in}\,\, s\in\R\,\, \text{for all}\,\, u^0\in l^\infty_0(\Z)\},$$
$$\tilde c_*=\sup\{c\,:\, \limsup_{|i|\le ct, t\to\infty} |u_i(t+s;s,u^0)-u^+(t+s)|=0\,\, \text{uniformly in}\,\,s\in\R\,\, \text{for all}\,\, u^0\in l^\infty_0(\Z)\}.$$
We call $[\tilde c_*,\tilde c^*]$ the {\rm generalized spreading speed interval} of \eqref{main-eqn}.
\end{definition}

Observe that
$$
\tilde  c_*\le  c_*\le  c^*\le \tilde  c^*,
$$
and that for any  $u^0\in l^\infty_0(\Z)$,
$$
 \limsup_{|i|\le ct, t\to\infty} |u_i(t;0,u^0)-u^+(t)|=0 \quad \forall\,\, c<c_*,
 $$
$$
 \limsup_{|i|\le ct, t\to\infty} |u_i(t+s;s,u^0)-u^+(t+s)|=0\quad \text{uniformly in }\,\, s\in\R\,\, \forall c<\tilde c_*,
 $$
 $$
 \limsup_{|i|\ge ct, t\to\infty} u_i(t;0,u^0)=0\quad \forall\,\, c>c^*,
 $$
 and
 $$
 \limsup_{|i|\ge ct, t\to\infty} u_i(t+s;s,u^0)=0\quad\text{uniformly in}\,\, s\in\R\,\, \forall\, c>\tilde c^*.
 $$
If $f(t,u)$ is independent of $t$ or periodic in $t$,  then
$$\tilde c_*=c_*=c^*=\tilde c^*
$$
(see \cite{LiZh1,LiZh2, We02}).

Define
$$
\bar f_{\inf}=\liminf_{t\ge s, t-s\to\infty}\frac{1}{t-s}\int_s^t f(\tau,0)d\tau,
$$
$$
\bar f_{\sup}=\limsup_{t\ge s, t-s\to\infty}\frac{1}{t-s}\int_s^t f(\tau,0)d\tau,
$$
$$
\bar f_{\inf}^+=\liminf_{t\ge s\ge 0, t-s\to\infty}\frac{1}{t-s}\int_s^t f(\tau,0)d\tau,
$$
and
$$
\bar f_{\sup}^+=\limsup_{t\ge s\ge 0, t-s\to\infty}\frac{1}{t-s}\int_s^t f(\tau,0)d\tau.
$$
We have the following theorem on the lower and upper bounds of the spreading speed intervals of \eqref{main-eqn}.

\begin{theorem}
\label{main-thm1}
Assume (H0)-(H2).
\begin{itemize}
\item[(1)]
$c_0^-:=\inf \limits_{\mu>0}\frac{e^{-\mu}+e^\mu-2+\bar f_{\inf}^+}{\mu}\le c_*\le
c^*\le c_0^+:=\inf \limits_{\mu>0}\frac{e^{-\mu}+e^{\mu}-2+\bar f_{\sup}^+}{\mu}.
$

\item[(2)] $\tilde c_0^-:=\inf \limits_{\mu>0}\frac{e^{-\mu}+e^\mu-2+\bar f_{\inf}}{\mu}\le \tilde c_*\le
\tilde c^*\le \tilde c_0^+:=\inf \limits_{\mu>0}\frac{e^{-\mu}+e^{\mu}-2+\bar f_{\sup}}{\mu}.
$
\end{itemize}
\end{theorem}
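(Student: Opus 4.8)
The plan is to establish each chain of inequalities by two independent halves: the upper bounds $c^*\le c_0^+$, $\tilde c^*\le\tilde c_0^+$ via comparison with the linearization at $u=0$ and exponential super-solutions, and the lower bounds $c_0^-\le c_*$, $\tilde c_0^-\le\tilde c_*$ via a spreading sub-solution together with Theorem~\ref{stable-thm}(2). Since \eqref{main-eqn} is invariant under the reflection $i\mapsto -i$, every two-sided statement about $|i|\ge ct$ or $|i|\le ct$ reduces to its one-sided analogue on $i\ge ct$ (resp.\ $i\le -ct$), so I will only describe the $i\ge ct$ side. Throughout I write $a(\mu):=e^{-\mu}+e^{\mu}-2>0$ for $\mu>0$, which is the role played here by the symbol $\mu^2$ of the continuous Laplacian.

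For the upper bound, (H1) gives $f(t,u)\le f(t,0)$ for $u\ge 0$, so a nonnegative solution of \eqref{main-eqn} is a sub-solution of the linear equation $\dot v_i=v_{i+1}-2v_i+v_{i-1}+f(t,0)v_i$. For fixed $\mu>0$ the function $v_i(t)=\exp\!\big(-\mu i+\int_s^t(a(\mu)+f(\tau,0))\,d\tau\big)$ solves this linear equation and (again by $f(t,0)\ge f(t,v)$ for $v\ge 0$) is a super-solution of \eqref{main-eqn}; after taking the minimum with the constant super-solution $M_0+\|u^0\|_\infty$ to stay in $l^\infty(\Z)$ and multiplying by a large constant $C$ depending only on $u^0$ (here $\supp u^0$ is bounded), it dominates $u^0$ at time $s$. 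Proposition~\ref{comparison} then yields $u_i(t+s;s,u^0)\le C\exp\!\big(-\mu i+\int_s^{t+s}(a(\mu)+f(\tau,0))\,d\tau\big)$. On $i\ge ct$ the exponent is at most $(a(\mu)-\mu c)t+\int_s^{s+t}f(\tau,0)\,d\tau$; bounding $\tfrac1t\int_s^{s+t}f(\tau,0)\,d\tau$ by $\bar f_{\sup}^+ +\epsilon$ when $s=0$, or by $\bar f_{\sup}+\epsilon$ uniformly in $s$ (directly from the definitions of $\bar f_{\sup}^+,\bar f_{\sup}$), for $t$ large, and choosing $\mu$ with $\mu c>a(\mu)+\bar f_{\sup}^+$ (resp.\ $>a(\mu)+\bar f_{\sup}$) when $c>c_0^+$ (resp.\ $c>\tilde c_0^+$), the exponent is negative for small $\epsilon$, so the bound tends to $0$ (uniformly in $s$ in the second case). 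With the mirror estimate this gives $c^*\le c_0^+$ and $\tilde c^*\le\tilde c_0^+$.

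For the lower bound, fix $c<c_0^-$ (resp.\ $c<\tilde c_0^-$); since these quantities are positive by (H1) we may assume $c>0$, and pick $c<c'<c_0^-$ (resp.\ $<\tilde c_0^-$). By Theorem~\ref{stable-thm}(2) it suffices to show, for every $u^0\in l^\infty_0(\Z)$, that $\liminf_{|i|\le c't,\,t\to\infty}u_i(t;0,u^0)>0$ (resp.\ the analogous liminf uniform in $s\in\R$), for then Theorem~\ref{stable-thm}(2) with $\gamma'=c'$ and $\gamma=c$ gives \eqref{convergence-aux-eq1} (resp.\ \eqref{convergence-aux-eq2}), whence $c$ belongs to the defining set of $c_*$ (resp.\ $\tilde c_*$), and letting $c\uparrow c_0^-$ (resp.\ $\uparrow\tilde c_0^-$) yields the claim. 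To produce the required positivity I first use (H2), $f(t,u)\ge f(t,0)-\tilde M_0u$, so that sub-solutions of the Fisher-type lattice equation $\dot w_i=w_{i+1}-2w_i+w_{i-1}+w_if(t,0)-\tilde M_0 w_i^2$ are sub-solutions of \eqref{main-eqn}. Because $c'<c_0^-$ there is $\mu>0$ with $c'\mu<a(\mu)+\bar f_{\inf}^+$, hence $\sigma>0$ and $T_0>0$ with $\int_s^{s+T}(a(\mu)+f(\tau,0))\,d\tau\ge (c'\mu+\sigma)T$ for all $T\ge T_0$ and all $s\ge 0$ (replace $\bar f_{\inf}^+$ by $\bar f_{\inf}$ and drop the restriction $s\ge 0$ for the uniform statement). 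Using the strong positivity of the lattice dynamics (solutions with nonnegative nontrivial data become strictly positive on every finite block of sites for $t>s$) together with the positivity of the averaged growth rate, one obtains $t_1,R>0$ and $\delta>0$ with $u_i(t_1;0,u^0)\ge\delta$ for $|i|\le R$. One then builds, from the averaged-growth inequality above, a compactly supported, outward-spreading sub-solution of the Fisher-type equation whose two edges move at asymptotic speed exceeding $c'$ and which at time $t_1$ is $\le\delta$ on $\{|i|\le R\}$ and $0$ outside, so that it sits below $u(t_1;0,u^0)$; comparison then propagates the domination and gives $\liminf_{|i|\le c't,\,t\to\infty}u_i(t;0,u^0)>0$ (uniform in $s$ in the other case).

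The main obstacle is the construction of that spreading sub-solution in the time-heterogeneous setting. Since $f(\cdot,0)$ is only $L^\infty$ and may be negative on arbitrarily long intervals, no profile of fixed shape with a constant front speed is a sub-solution, and a naive single exponential fails; the device is to use an exponentially-weighted bump of fixed width, with its amplitude and phase slaved to $\int_s^t(a(\mu)+f(\tau,0))\,d\tau$ so that the oscillations of $f(\cdot,0)$ are absorbed into the motion of the support, while the amplitude is simultaneously kept below a threshold $\delta_0$ for which $\tilde M_0\delta_0$ is dominated by the excess rate $\sigma$, so that the quadratic term $-\tilde M_0 w_i^2$ is harmless. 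These two demands — pushing the edges forward at speed approaching $c_0^-$ versus keeping the profile small — pull against each other (the unconstrained amplitude would grow like $e^{\sigma t}$), so the argument must be organized over successive time windows of bounded length on which the profile is renormalized, with the differential inequality checked carefully at the edges of the support and across consecutive windows; this bookkeeping, not the discreteness (which merely replaces $\mu^2$ by $a(\mu)$ and forces comparison only at integer sites) and not the reflection reduction, is where the work lies. The passage between the two versions of the theorem then costs nothing beyond using $\bar f_{\inf},\bar f_{\sup}$ in place of $\bar f_{\inf}^+,\bar f_{\sup}^+$ and invoking the uniform-in-$s$ clause of Theorem~\ref{stable-thm}(2).
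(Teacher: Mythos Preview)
Your upper-bound argument is correct and in fact more elementary than the paper's: a single exponential super-solution $C\exp\!\big(-\mu i+\int_s^t(a(\mu)+f(\tau,0))\,d\tau\big)$ together with the definition of $\bar f_{\sup}^+$ (resp.\ $\bar f_{\sup}$) gives the decay directly, whereas the paper runs the same change of variables it uses for the lower bound (see below) to reduce to an autonomous comparison and then cites Lemma~\ref{technical-lm4}.

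The lower-bound argument, however, has a genuine gap. You correctly locate the difficulty --- building a compactly supported spreading sub-solution when $f(\cdot,0)$ is only in $L^\infty$ and may be negative on long stretches --- but you do not actually build it. The description (``exponentially-weighted bump of fixed width, amplitude and phase slaved to $\int(a(\mu)+f)\,d\tau$, renormalized over successive time windows'') is not a construction: a bump of fixed width does not spread, and the renormalization step asks you to show that after each window the solution dominates a shifted copy of the initial bump, which is precisely the estimate in question. If one tries the natural moving-profile candidate $\delta_0\big(e^{-\mu(i-\xi(t))}-Ce^{-\tilde\mu(i-\xi(t))}\big)_+$ with $\xi'(t)=(a(\mu)+f(t,0))/\mu$, the sub-solution inequality fails whenever $f(t,0)$ is sufficiently negative, because the coefficient of the $e^{-\tilde\mu(\cdot)}$ correction changes sign with $f(t,0)$. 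The paper avoids the whole construction by a different device: Lemma~\ref{technical-lemma}(2) (from Nadin--Rossi) supplies $A\in W^{1,\infty}$ with $\essinf(A'+f(\cdot,0))=\bar f_T$ for $T$ chosen via Lemma~\ref{technical-lm2}, and then $v_i(t):=u_i(t;0,u^0)e^{A(t)}$ satisfies
\[
\dot v_i\ \ge\ v_{i+1}-2v_i+v_{i-1}+v_i\big(\bar f_T-\tilde M v_i\big),
\]
so $v$ is a super-solution of an \emph{autonomous} lattice KPP equation whose spreading speed $\inf_{\mu>0}\frac{a(\mu)+\bar f_T}{\mu}$ exceeds $\gamma'$ and is already known (Lemma~\ref{technical-lm4}). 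Since $A$ is bounded, $\liminf_{|i|\le\gamma't}v_i>0$ transfers to $u$, and Theorem~\ref{stable-thm}(2) finishes as you intended. This reduction to the autonomous case via the bounded correction $A$ is the idea your sketch is missing.
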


\begin{remark}
(1) If $f(\cdot,0)$ is unique ergodic, then the limit $$\hat f=\lim \limits_{t\ge s, t-s\to\infty}\frac{1}{t-s}\int_s^t f(\tau,0)d\tau$$ exists (see \cite{Sh09, Sh11} for details). Thus
$$c_*=c^*=\tilde c_*=\tilde c^*=\inf \limits_{\mu>0}\frac{e^{-\mu}+e^{\mu}-2+\hat f}{\mu}.$$
 In this case, $c_*$ is called the {\it spreading speed} of \eqref{main-eqn}.

 (2)  There is a unique $\mu^*>0$ such that
$$
\tilde c_0^-=\frac{e^{-\mu^*}+e^{\mu^*}-2+\bar f_{\inf}}{\mu^*}
$$
and
for any $\gamma>\tilde c_0^-$,
 the equation $\gamma=\frac{e^{-\mu}+e^{\mu}-2+\bar f_{\inf}}{\mu}$ has  exactly two positive solutions for $\mu$
 (see Lemma \ref{mu-star-lemma}).
 
  (3) It will be proved in Theorem \ref{main-thm2} that $\tilde c_*=\tilde c_0^-$.
\end{remark}

We then study transition front solutions of \eqref{main-eqn}.

\begin{definition} [Transition front] An entire solution $u(t)=\{u_{i}(t)\}_{i\in\Z}$ of \eqref{main-eqn} is called a  {\it transition front} (connecting $0$ and $u^+(t)$)  if $u_i(t)\in(0,u^+(t))$ for all $t\in\R$ and $i\in\Z$, and there exists
 $J:\R\to \Z$ such that
\begin{equation*}
\lim_{i\to-\infty}(u_{i+J(t)}(t)-u^+(t))=0\,\,\text{and}\,\,\lim_{i\to\infty}u_{i+J(t)}(t)=0\,\,\text{uniformly in}\,\,t\in\R.
\end{equation*}
\end{definition}

The notion of a transition front is a proper generalization of a traveling wave in homogeneous media or a periodic (or pulsating) traveling wave in periodic media. The {\it interface location function} $J(t)$ tells the position of the transition front $u(t)$ as time $t$ elapses. Notice, if $\xi(t)$ is a bounded integer-valued function, then $J(t)+\xi(t)$ is also an interface location function. Thus, interface location function is not unique. But, it is easy to check that if $\tilde J(t)$ is another interface location function, then $J(t)-\tilde J(t)$ is a bounded integer-valued function. Hence, interface location functions are unique up to addition by bounded integer-valued functions. The uniform-in-$t$ limits shows the \textit{bounded interface width}, that is,
\begin{equation*}
\forall\,\,0<\ep_{1}\leq\ep_{2}<1,\quad\sup_{t\in\R}{\rm diam}\{i\in\Z|\ep_{1}\leq u_{i}(t)\leq\ep_{2}\}<\infty.
\end{equation*}
We prove

\begin{theorem}
\label{main-thm2}
(1) For any $\gamma>\tilde c_0^-$,  let $0<\mu<\mu^*$ and  $c(t)=\frac{e^{-\mu}+e^{\mu}-2+f(t,0)}{\mu}$ be such that $\bar c_{\inf}=\gamma$. Then there exists a continuous function $\phi:\R\times\R\ra\R^+$ with $\phi(x,t)$ being non-increasing in $x$ and  \begin{equation*}
\lim_{x\to-\infty}(\phi(x,t)-u^+(t))=0\,\,\text{and}\,\,\lim_{x\to\infty}\frac{\phi(x,t)}{e^{-\mu x}}=1\,\,\text{uniformly in}\,\,t\in\R
\end{equation*}
 such that $u(t)$ is a transition front solution of \eqref{main-eqn}, where $u_i(t)=\phi(i-\int^t_0c(\tau)d\tau,t)$ for $i\in\Z$.
 
 \smallskip
 
  (2)  $\tilde c_*=\tilde c_0^-$.
\end{theorem}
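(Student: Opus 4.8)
\emph{Notation.} Write $\La(\nu)=e^{-\nu}+e^{\nu}-2$, so the identity defining the speed function reads $\mu c(t)=\La(\mu)+f(t,0)$; equivalently, with $\xi_i(t):=i-\int_0^t c(\tau)\,d\tau$, the function $e^{-\mu\xi_i(t)}$ solves \eqref{main-eqn} with $u_if(t,u_i)$ replaced by $f(t,0)u_i$, and since (H1) gives $f_u<0$ on $[0,\infty)$ (hence $f(t,w)\le f(t,0)$ for $w\ge 0$), the function $e^{-\mu\xi_i(t)}$ is in fact a super-solution of \eqref{main-eqn}.

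\emph{Plan for (1) --- a sub-/super-solution sandwich and a limiting argument.} Given $\gamma>\tilde c_0^-$, since $\bar c_{\inf}=\frac{\La(\mu)+\bar f_{\inf}}{\mu}$, which by Lemma \ref{mu-star-lemma} is strictly decreasing in $\mu$ on $(0,\mu^*)$ with value $\La'(\mu^*)=\tilde c_0^-$ at $\mu^*$, I pick the unique $\mu\in(0,\mu^*)$ with $\bar c_{\inf}=\gamma$ (note $c(\cdot)\in L^\infty(\R)$). The super-solution is $\bar u_i(t)=\min\{u^+(t),e^{-\mu\xi_i(t)}\}$ (a minimum of a solution and a super-solution). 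For the sub-solution I seek the classical shape $\underline u_i(t)=\max\bigl\{0,\,e^{-\mu\xi_i(t)}-d(t)\,e^{-(\mu+\tilde\mu)\xi_i(t)}\bigr\}$ with $0<\tilde\mu<\mu$ and $d(\cdot)$ bounded below by a large constant $d_0$ (which forces $\underline u\le\bar u$): substituting into \eqref{main-eqn} and using (H2) as $f(t,w)\ge f(t,0)-\tilde M_0w$, the $e^{-\mu\xi_i}$-terms cancel thanks to $\mu c(t)=\La(\mu)+f(t,0)$, and on the support of $\underline u$ the sub-solution requirement reduces to a scalar differential inequality of the form $\dot d(t)\ge\beta(t)\,d(t)+C\,d(t)^{-(\mu-\tilde\mu)/\tilde\mu}$ with $\beta(t):=\La(\mu)-\La(\mu+\tilde\mu)+\tilde\mu\,c(t)$ and $C=C(\mu,\tilde\mu,\tilde M_0)$ bounding the residual nonlinear term. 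Granting a suitable $d(\cdot)$, I solve \eqref{main-eqn} from each time $-n$ with data $\underline u(-n)$; by Proposition \ref{comparison} these solutions increase in $n$ and remain in $[\underline u,\bar u]$ on $[-n,\infty)$, and since \eqref{main-eqn} is an ODE in $l^\infty(\Z)$ a priori bounds produce an entire solution $u^\infty$ with $\underline u\le u^\infty\le\bar u$. Strong positivity of the lattice semiflow gives $0<u^\infty_i(t)<u^+(t)$; the bound by $\bar u$ gives the decay $u^\infty_i(t)\le e^{-\mu\xi_i(t)}$, and together with sharper comparison near the leading edge the asymptotics $\phi(x,t)/e^{-\mu x}\to1$; the convergence $u^\infty_i(t)-u^+(t)\to0$ as $i\to-\infty$, uniformly in $t$, follows from the global stability of $u^+$ (Theorem \ref{stable-thm}). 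Running the construction with $\xi_i$ shifted by an arbitrary $a\in\R$ and using the monotone dependence on $a$ yields the continuous profile $\phi(x,t)$ (monotone in $x$), and $u_i(t)=\phi\bigl(i-\int_0^t c(\tau)\,d\tau,t\bigr)$ is then a transition front with interface location $\bigl\lfloor\int_0^t c(\tau)\,d\tau\bigr\rfloor$.

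\emph{The main obstacle.} In a time-heterogeneous medium $\beta(t)$ need not be nonnegative for individual $t$ --- only the time-\emph{average} of $f(\cdot,0)$, not $f(t,0)$ itself, is controlled --- so the classical choice $d\equiv\mathrm{const}$ (valid in the autonomous and periodic cases) fails, and $d$ must be allowed to depend on $t$ and satisfy the displayed inequality globally on $\R$ while staying $\ge d_0$. The hypothesis $\mu<\mu^*$ is precisely what rescues this: it forces $\gamma=\bar c_{\inf}>\La'(\mu)$ (since $\frac{\La(\mu)+\bar f_{\inf}}{\mu}$ exceeds its minimum value $\La'(\mu^*)\ge\La'(\mu)$ for $\mu<\mu^*$), whence for $\tilde\mu$ small $\liminf_{t-s\to\infty}\frac1{t-s}\int_s^t\beta(\tau)\,d\tau=\La(\mu)-\La(\mu+\tilde\mu)+\tilde\mu\gamma>0$; this averaged positivity, i.e.\ $\int_s^t\beta\to+\infty$ as $t-s\to\infty$, is what allows a bounded-below $d(\cdot)$ with controlled growth solving the inequality on all of $\R$. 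Designing this $d(\cdot)$ --- and tracking the constants at the corners of the supports, especially as $t\to-\infty$ --- is the technical heart of part (1) and the step I expect to be hardest.

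\emph{Plan for (2).} By Theorem \ref{main-thm1}(2), $\tilde c_*\ge\tilde c_0^-$, so it suffices to show $\tilde c_*\le c$ for each fixed $c>\tilde c_0^-$; for that it is enough to exhibit one $u^0\in l^\infty_0(\Z)$ for which $\limsup_{|i|\le ct,\,t\to\infty}|u_i(t+s;s,u^0)-u^+(t+s)|=0$ fails uniformly in $s$. Choose $\gamma\in(\tilde c_0^-,c)$ and, as above, $\mu>0$ with $c_\gamma(t):=\frac{\La(\mu)+f(t,0)}{\mu}\in L^\infty(\R)$ satisfying $\bar c_{\inf}=\gamma$. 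Given $u^0\in l^\infty_0(\Z)$ with $u^0_i=0$ for $|i|>N$ and $\|u^0\|_\infty=K$, put $a:=N+\max\{0,\mu^{-1}\ln K\}$ (depending only on $u^0$). For $t\ge s$ the function $w_i(t):=e^{-\mu(i-a-\int_s^t c_\gamma(\tau)\,d\tau)}$ is a super-solution of \eqref{main-eqn} (again because $\mu c_\gamma(t)=\La(\mu)+f(t,0)$ and $f(t,w)\le f(t,0)$) with $w_i(s)\ge u^0_i$ for all $i$, so Proposition \ref{comparison} gives
\begin{equation*}
u_i(t+s;s,u^0)\ \le\ e^{-\mu(i-a-\int_s^{t+s}c_\gamma(\tau)\,d\tau)}\qquad(i\in\Z,\ t\ge0,\ s\in\R).
\end{equation*}
Since $\bar c_{\inf}=\gamma$, there are $s_n\in\R$ and $\tau_n\to\infty$ with $\tau_n^{-1}\int_{s_n}^{s_n+\tau_n}c_\gamma(\tau)\,d\tau\to\gamma$; writing $I_n:=\int_{s_n}^{s_n+\tau_n}c_\gamma(\tau)\,d\tau$ and $\de_0:=\tfrac12(c-\gamma)>0$, we have $0<I_n\le(c-\de_0)\tau_n$ for all large $n$. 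Fix $R\in\N$ with $e^{-\mu R}<\tfrac12\inf_{t\in\R}u^+(t)$ (positive, since $u^+$ is an entire positive solution) and set $i_n:=\lceil a+I_n\rceil+R\in\Z$. Then for $n$ large $0<i_n\le a+I_n+1+R\le a+1+R+(c-\de_0)\tau_n\le c\tau_n$, so $|i_n|\le c\tau_n$, while the displayed estimate gives $u_{i_n}(\tau_n+s_n;s_n,u^0)\le e^{-\mu(i_n-a-I_n)}\le e^{-\mu R}$, hence
\begin{equation*}
\bigl|u_{i_n}(\tau_n+s_n;s_n,u^0)-u^+(\tau_n+s_n)\bigr|\ \ge\ \inf_{t\in\R}u^+(t)-e^{-\mu R}\ \ge\ \tfrac12\inf_{t\in\R}u^+(t)>0 .
\end{equation*}
This is the required failure of uniformity, so $\tilde c_*\le c$; letting $c\downarrow\tilde c_0^-$ gives $\tilde c_*\le\tilde c_0^-$, and with Theorem \ref{main-thm1}(2) we conclude $\tilde c_*=\tilde c_0^-$.
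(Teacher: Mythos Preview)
Your sub-/super-solution framework for part (1) is essentially the paper's: your $d(t)$ is the paper's $e^{A(t)}$, and your ``averaged positivity'' observation is precisely the content of Lemma~\ref{technical-lemma}(1), which hands you a bounded $A\in W^{1,\infty}(\R)$ with $\essinf(A'+B)>0$; there is no need to solve a nonlinear differential inequality for $d$, since once $A$ is shifted up by a large constant the quadratic error term $\tilde M_0\psi^2 e^{\tilde\mu x-A(t)}\le\tilde M_0 e^{-A(t)}$ is absorbed.

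There is however a real gap in how you extract monotonicity and \emph{continuity} of $\phi$. You take the increasing limit of solutions launched from the sub-solution data $\underline u(\cdot,-n)$; but $\underline u$ is not monotone in $i$ (it vanishes to the far left), so neither is the limit a priori, and your shift-by-$a$ argument yields at best a monotone profile, not a continuous one (monotone limits can jump). The paper avoids this by working with the continuous-$x$ extension \eqref{main-eqn2} and taking \emph{both} limits: $v^\tau$ decreasing from the monotone super-solution data $\bar v(\cdot,-\tau)$ gives an upper-semicontinuous, $x$-nonincreasing limit $v^+$, while $v_\tau$ increasing from $\underline v(\cdot,-\tau)$ gives a lower-semicontinuous limit $v^-$. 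The part-metric decrease of Proposition~\ref{part-metric}(3) then forces $v^+=v^-$, which simultaneously delivers continuity and monotonicity. Your outline is missing this squeeze, and I do not see how to close it by shifts alone.

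Your argument for part (2) is correct and is genuinely more elementary than the paper's. The paper argues by contradiction using the transition front from part~(1): it plants a fixed $u^0\in l^\infty_0(\Z)$ below the shifted fronts $u^s$, uses the assumed spreading at speed $c''>\gamma$ to force the front to have caught up, and reads off $\bar c_{\inf}\ge c'>\gamma$. You instead compare any $u^0\in l_0^\infty(\Z)$ directly against the exponential super-solution $e^{-\mu(i-a-\int_s^{t+s}c_\gamma)}$ and exploit a liminf-attaining sequence for $\bar c_{\inf}=\gamma$ to locate points $i_n$ inside the cone $|i|\le c\tau_n$ where the solution stays below $e^{-\mu R}\ll\inf u^+$. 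This is cleaner, makes part~(2) independent of part~(1), and uses only Proposition~\ref{comparison} and the positivity $\inf_{t}u^+(t)>0$ from Theorem~\ref{stable-thm}.
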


\begin{remark}
(1) If $f(t,u)\equiv f(u)$ is independent of $t$, then so is $\phi(x,t)$ and hence
$u_i(t)=\phi(i- ct)$ is a traveling wave solution of \eqref{main-eqn} in the classical sense,
where $c=\frac{e^{-\mu}+e^{\mu}-2+f(0)}{\mu}$ (see Remarks \ref{super-solu-rk1} and \ref{super-solu-rk2}).

(2) If $f(t,u)$ is periodic in $t$ with period $T$, then so is $\phi(x,t)$ (see Remarks \ref{super-solu-rk1} and \ref{super-solu-rk2}). Let $\hat f=\frac{\int_0^T f(\tau,0)d\tau}{T}$ and
$\psi(x,t)=\phi(x-\frac{\int_0^t f(\tau,0)d\tau -\hat f t}{\mu},t)$. Then $\psi(x,t)$ is continuous in $x\in\R$ and $t\in\R$, nonincreasing in
$x$,  periodic in $t$ with period $T$,
and
$$
u_i(t)=\phi(i-\int_0^t c(\tau)d\tau,t)=\psi(i-ct,t),
$$
where $c=\frac{e^{-\mu}+e^{\mu}-2+\hat f}{\mu}$. Therefore, $u_i(t)=\psi(i-ct,t)$ is a periodic traveling wave solution
with $\psi: \R\times\R\to \R^+$ being continuous (see Remarks \ref{super-solu-rk1} and \ref{super-solu-rk2}), which is new. Observe that \cite[Theorem 4.2]{LiZh1} and \cite[Theorem 6.6]{We82} imply the existence of traveling wave
solution of \eqref{main-eqn} of the form  $u_i(t)=\Phi(i-ct,t)$, where for each fixed $t$, $\Phi(x,t)$ is only defined for $x\in\{i-c(nT+t)|n\in\Z\}$.
\end{remark}

{ We also prove

\begin{theorem}
\label{main-thm3}
There is a transition front solution $u_i^*(t)$ with interface location function $J^*(t)$ satisfying that $u_i^*(t)$ is nonincreasing in $i\in \Z$,
and
$$
\liminf_{t-s\to\infty}\frac{J^*(t)-J^*(s)}{t-s}=\tilde  c_0^-.
$$
\end{theorem}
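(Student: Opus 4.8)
\noindent\emph{Proof (sketch).} The plan is to obtain $u^*$ as a limit, along speeds $\gamma\downarrow\tilde c_0^-$, of the transition fronts supplied by Theorem~\ref{main-thm2}(1), and then to read off the asymptotics of its interface from the barriers appearing in that construction. Fix a sequence $\gamma_n\downarrow\tilde c_0^-$; for each $n$ choose $0<\mu_n<\mu^*$, set $c_n(t)=\frac{e^{-\mu_n}+e^{\mu_n}-2+f(t,0)}{\mu_n}$ (so that $\liminf_{t-s\to\infty}\frac1{t-s}\int_s^tc_n(\tau)\,d\tau=\gamma_n$ as in Theorem~\ref{main-thm2}(1)), and let $u^n_i(t)=\phi_n\bigl(i-\int_0^tc_n(\tau)\,d\tau,\,t\bigr)$ be the corresponding transition front. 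Then $\mu_n\uparrow\mu^*$ and $c_n\to c_{\mu^*}$ uniformly on compact $t$-sets, where $c_{\mu^*}(t)=\frac{e^{-\mu^*}+e^{\mu^*}-2+f(t,0)}{\mu^*}$, and the defining property of $\mu^*$ (Lemma~\ref{mu-star-lemma}) gives
$$
\liminf_{t-s\to\infty}\frac1{t-s}\int_s^tc_{\mu^*}(\tau)\,d\tau=\frac{e^{-\mu^*}+e^{\mu^*}-2+\bar f_{\inf}}{\mu^*}=\tilde c_0^-.
$$
Fixing $0<\theta^*<\delta_0:=\inf_{t\in\R}u^+(t)$, I would use the monotonicity of $i\mapsto u^n_i(0)$ together with the limits of $\phi_n$ to pick $a_n\in\Z$ with $u^n_{a_n}(0)\ge\theta^*>u^n_{a_n+1}(0)$, and normalize by setting $\tilde u^n_i(t):=u^n_{i+a_n}(t)$; this is again an entire solution of \eqref{main-eqn}, nonincreasing in $i$, with $0<\tilde u^n_i(t)<u^+(t)$ and $\tilde u^n_0(0)\ge\theta^*>\tilde u^n_1(0)$.

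On each compact time interval $\{\tilde u^n\}$ is uniformly bounded, hence so is the right-hand side of \eqref{main-eqn} along it and the derivatives $\dot{\tilde u}^n_i$; by Arzel\`a--Ascoli and a diagonal extraction I pass to a subsequence along which $\tilde u^n_i(t)\to u^*_i(t)$ for every $i\in\Z$, locally uniformly in $t$. The limit $u^*$ is an entire solution of \eqref{main-eqn}, nonincreasing in $i$, with $0\le u^*_i(t)\le u^+(t)$, $u^*_0(0)\ge\theta^*$ and $u^*_1(0)\le\theta^*<\delta_0\le u^+(0)$; thus $u^*\not\equiv0$ and $u^*\not\equiv u^+$, and applying the comparison principle (Proposition~\ref{comparison}) to $u^*$ and to $v:=u^+-u^*$ (which, via the mean value theorem, solves a linear lattice equation with bounded coefficients) forces $0<u^*_i(t)<u^+(t)$ for all $i,t$. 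To see that $u^*$ is a genuine transition front, I would carry the sub- and super-solutions from the proof of Theorem~\ref{main-thm2} through the limit: after the shift by $a_n$ they trap $\tilde u^n$ between $\min\{u^+(t),\,e^{-\mu_n(i-\int_0^tc_n(\tau)d\tau)}\}$ and a sub-solution whose level-$\theta^*$ interface sits within a bounded-in-$n$ distance of $\int_0^tc_n(\tau)\,d\tau$; letting $n\to\infty$ yields
$$
u^*_i(t)\le\min\!\Bigl\{u^+(t),\,e^{-\mu^*\left(i-\int_0^tc_{\mu^*}(\tau)d\tau\right)}\Bigr\},\qquad u^*_i(t)\ge\delta'\ \text{ for }\ i\le\int_0^tc_{\mu^*}(\tau)\,d\tau-K',
$$
for some $\delta'\ge\theta^*$ and $K'>0$, together with $u^*_i(t)\to u^+(t)$ as $i\to-\infty$ uniformly in $t$ and a uniform-in-$t$ bound on the interface width. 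Hence $J^*(t):=\sup\{i\in\Z:u^*_i(t)\ge\theta^*\}$ is a well-defined integer with $\bigl|J^*(t)-\int_0^tc_{\mu^*}(\tau)d\tau\bigr|\le K$, and it is an interface location function for the transition front $u^*$, which is nonincreasing in $i$.

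It remains to compute $\liminf_{t-s\to\infty}\frac{J^*(t)-J^*(s)}{t-s}$. For the lower bound, fix $0<c<\tilde c_0^-=\tilde c_*$ (Theorem~\ref{main-thm2}(2)) and $0<\ep<\delta_0-\theta^*$. For each $s$, the datum $v^0\in l^\infty_0(\Z)$ with $v^0_{J^*(s)}=\theta^*$ and $v^0_i=0$ otherwise satisfies $v^0\le u^*(\cdot,s)$ by the choice of $J^*$, so $u_i(t;s,v^0)\le u^*_i(t)$ for $t\ge s$ by Proposition~\ref{comparison}; applying the spreading estimate stated after the definition of the generalized spreading speed interval to the fixed datum $w^0$ obtained by translating $v^0$ to site $0$ (legitimate since $w^0\in l^\infty_0(\Z)$ and $c<\tilde c_*$), there is $T_\ep$ independent of $s$ with $u^*_{J^*(s)+\lfloor c(t-s)\rfloor}(t)\ge u_{\lfloor c(t-s)\rfloor}(t;s,w^0)\ge u^+(t)-\ep\ge\theta^*$ for $t-s\ge T_\ep$; hence $J^*(t)\ge J^*(s)+c(t-s)-1$, and letting $t-s\to\infty$ and then $c\uparrow\tilde c_0^-$ gives $\liminf\ge\tilde c_0^-$. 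For the upper bound, the two barriers above give $J^*(t)\le\int_0^tc_{\mu^*}(\tau)d\tau+K$ and $J^*(s)\ge\int_0^sc_{\mu^*}(\tau)d\tau-K'$, so $J^*(t)-J^*(s)\le\int_s^tc_{\mu^*}(\tau)d\tau+K+K'$ and therefore
$$
\liminf_{t-s\to\infty}\frac{J^*(t)-J^*(s)}{t-s}\le\liminf_{t-s\to\infty}\frac1{t-s}\int_s^tc_{\mu^*}(\tau)\,d\tau=\tilde c_0^-.
$$
Combining the two inequalities gives the asserted identity.

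The step I expect to be the main obstacle is the uniform passage to the limit in the barriers as $\mu_n\uparrow\mu^*$. Exactly at the critical exponent the exponential sub-/super-solutions used for speeds $>\tilde c_0^-$ degenerate (the second linearly independent solution of the linearization of \eqref{main-eqn} at $0$ in the frame moving with $c_{\mu^*}$ is no longer a pure exponential), so one must check that the barriers in the proof of Theorem~\ref{main-thm2} can be normalized with their level-$\theta^*$ interfaces locked to $\int_0^tc_n(\tau)d\tau$ within an $O(1)$ independent of $n$ — equivalently, that the shifts $a_n$ remain bounded — and that the limiting sub-barrier still enjoys the two-sided behaviour ($\to u^+(t)$ behind, vanishing far ahead, bounded interface width) used above; the behind-the-front statement can alternatively be recovered from the stability assertion Theorem~\ref{stable-thm}(2) once $u^*$ is known to be bounded below by $\delta'$ on the region $i\le\int_0^tc_{\mu^*}(\tau)d\tau-K'$. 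Granting this, the remaining ingredients — compactness, the comparison principle, and the already established identity $\tilde c_*=\tilde c_0^-$ — are routine.
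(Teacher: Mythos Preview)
Your route --- taking a limit of the fronts from Theorem~\ref{main-thm2} as $\gamma_n\downarrow\tilde c_0^-$ (equivalently $\mu_n\uparrow\mu^*$) --- is genuinely different from the paper's, which fixes the critical exponent $\mu^*$ from the start, launches solutions at times $-n$ from the super-barrier $\bar\phi_{\mu^*}(\cdot,-n)=\min\{e^{-\mu^*(\cdot)},u^+(-n)\}$, recenters so that the value at the origin equals $u^+(0)/2$, and extracts a subsequential limit as $n\to\infty$.

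The obstacle you flag in your last paragraph is a real gap, not a technicality. In the proof of Lemma~\ref{subsuper-solu-lemma} the sub-barrier is built from $\psi(x,t)=e^{-\mu x}-e^{A(t)-\tilde\mu x}$ with $\tilde\mu\in(\mu,2\mu)$ chosen so that $\bar B_{\inf}=\tilde\mu\bigl(\gamma-\chi_1(\tilde\mu)\bigr)>0$. As $\mu\uparrow\mu^*$ the admissible window for $\tilde\mu$ collapses to $\{\mu^*\}$, forcing $\bar B_{\inf}\downarrow0$; the additive constant added to $A$ (needed so that $\tilde M_0 e^{-A}\le\essinf(A'+B)$) then tends to $+\infty$, and the positivity set $\{x>A(t)/(\tilde\mu-\mu)\}$ of $\psi$ escapes to $+\infty$ while its maximum height tends to $0$. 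Thus $\{\underline\phi_{\mu_n}\}$ does \emph{not} provide a lower barrier whose level-$\theta^*$ interface stays within $O(1)$ of $\int_0^tc_n$ uniformly in $n$; the estimates you need --- $u^*_i(t)\ge\delta'$ behind the front, the uniform-in-$t$ interface width, and especially $J^*(s)\ge\int_0^sc_{\mu^*}-K'$, on which your upper bound for the $\liminf$ rests --- are unjustified.

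The paper sidesteps this degeneration by never seeking a sub-barrier at $\mu^*$. Instead it uses a discrete zero-number (intersection) principle, Lemma~\ref{zero-number-lemma}: after synchronizing the level $u^+(t)/2$, the $\mu^*$-profile and each subcritical profile $v_\mu$ ($0<\mu<\mu^*$) cross at most once, so the critical solution dominates $v_\mu$ to the left of the crossing and is dominated by it to the right. From this, both the transition-front property (uniform limits, bounded interface width) and the speed upper bound $J(t)-J(s)\le\int_s^tc_\mu+L$ are inherited from the well-controlled subcritical fronts, and letting $\mu\uparrow\mu^*$ yields the $\liminf\le\tilde c_0^-$ bound. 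If you wish to keep your limit-in-$\mu$ scheme, this intersection comparison with a fixed subcritical $\mu$ is the missing ingredient; the barrier route alone will not close.
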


\begin{remark}
The transition front solution in Theorem \ref{main-thm3} is the analogue of {\it critical traveling front solution} in literature
(see \cite{Na14}, \cite{Sh04}). It is also the analogue of the traveling wave solution with minimal wave speed in the time independent case.
\end{remark}
}

The rest of the paper is organized as follows. In Section 2, we establish some basic properties
of solutions of lattice equation \eqref{main-eqn} for the use in later sections. We study spatially homogeneous entire positive solutions of
\eqref{main-eqn} and prove Theorem \ref{stable-thm}  in Section 3. In Section 4, we investigate the (generalized) spreading speeds and prove
Theorem \ref{main-thm1}. Section 5 is devoted to the proof of the existence of transition fronts for lattice equation \eqref{main-eqn}.


\section{Preliminary}


In this section, we present some preliminary materials to be used in later sections. We first present a comparison
principle for sub-solutions and super-solutions of \eqref{main-eqn} and prove the convergence of solutions on compact subsets. Next, we introduce the concept of the so called part metric and prove the decreasing property
of the part metric between two positive solutions of \eqref{main-eqn} as time increases. Finally, we present a technical lemma from \cite{NaRo12}.

First of all,  consider the following space continuous version of \eqref{main-eqn},
\begin{equation}\label{main-eqn2} \pa_tv(x,t)=Hv(x,t)+v(x,t) f(t,v(x,t)), \quad\quad x\in \R,\, t\in\R,
\end{equation}
where  $$Hv(x,t)=v(x+1,t)+v(x-1,t)-2v(x,t), \quad x\in \R,\, t\in\R.$$
Recall
$$
l^\infty(\Z)=\{u:\Z\to\R\,:\, \sup_{x\in\Z}|u(x)|<\infty\}.
$$
Let
$$
l^\infty(\R)=\{u:\R\to\R\,:\, \sup_{x\in\R}|u(x)|<\infty\}
$$
with norm $\|u\|=\sup_{x\in\R}|u(x)|$.
Let  $$ l^{\infty,+}(\Z)=\{u\in l^{\infty}(\Z):\inf_{i\in\Z}u_i\ge 0\},\quad l^{\infty,+}(\R)=\{u\in l^\infty(\R)\,:\, \inf_{x\in\R}u(x)\ge 0\}$$
and
$$ l^{\infty,++}(\Z)=\{u\in l^{\infty}(\Z):\inf_{i\in\Z}u_i> 0\},\quad l^{\infty,++}(\R)=\{u\in l^{\infty}(\R): \inf_{x\in\R} u(x)>0\}.$$
For $u,v\in l^\infty(\R)$ (resp. $u,v\in l^\infty(\Z)$), we define
$$
u\ge v \quad {\rm if}\quad u-v\in l^{\infty,+}(\R)\quad {\rm (resp.}\,\,\, u-v\in l^{\infty,+}(\Z)),
$$
and
$$
u\gg v \quad {\rm if}\quad u-v\in l^{\infty,++}(\R)\quad {\rm (resp.}\,\,\, u-v\in l^{\infty,++}(\Z)).
$$

For any $u_0\in l^\infty(\R)$, let $u(x,t;s,u_0)$ be the solution of \eqref{main-eqn2} with $u(x,s;s,u_0)=u_0(x)$, and
for any $u^0\in l^\infty(\Z)$, let $u(t;s,u^0)=\{u_i(t;s,u^0)\}_{i\in\Z}$ be the solution of \eqref{main-eqn} with $u_i(s;s,u^0)=u^0_i$ for $i\in\Z$.
Observe that for given $u_0\in l^\infty(\R)$ and $x_0\in\R$, $u(x_0+i,t;s,u_0)$ only depends on $\{u_0(x_0+i)|i\in\Z\}$ and
$u(x_0+i,t;s,u_0)=u_i(t;s,u^0)$, where $u^0_i=u_0(x_0+i)$ for $i\in\Z$.

 A  function $v(x,t)$ on $\R\times[s,T)$ which is continuous in $t$
  is called a {\it super-solution} or {\it sub-solution} of \eqref{main-eqn2} (resp. \eqref{main-eqn}) if for any given $x\in\R$ (resp. $x\in\Z$), $v(x,t)$ is  absolutely continuous
in  $t\in [s,T)$,  and
$$ v_t(x,t)\ge Hv(x,t)+v(x,t) f(t,v(x,t))\quad {\rm for}\quad a.e. \,\, t\in [s,T)$$
or
$$ v_t(x,t)\le  Hv(x,t)+v(x,t) f(t,v(x,t))\quad {\rm for}\quad a.e. \,\, t\in[s,T).$$

\begin{proposition}[Comparison principle]\label{comparison}
\begin{itemize}
\item[(1)]
If $u_1(x,t)$ and $u_2(x,t)$ are bounded sub-solution and super-solution of \eqref{main-eqn2} (resp. \eqref{main-eqn}) on $[s,T)$, respectively, and $u_1(\cdot,0)\leq u_2(\cdot,0)$, then $u_1(\cdot,t)\leq u_2(\cdot,t)$ for $t\in[s,T)$.

\item[(2)] Suppose that $u_1(x,t)$, $u_2(x,t)$ are bounded and satisfy that for any given $x\in\R$ (resp. $x\in\Z$),
 $u_1(x,t)$ and $u_2(x,t)$ are absolutely continuous in $t\in[s,\infty)$, and

\medskip
 \item[]  $\pa_t u_2(x,t)-(Hu_2(x,t)+u_2(x,t)f(t,u_2(x,t)))>\pa_t u_1(x,t)-(Hu_1(x,t)+u_1(x,t)f(t,u_1(x,t)))$

\medskip
    \item[] for a.e. $t>s$.
    Moreover, suppose that $u_2(\cdot,s)\geq u_1(\cdot,s)$. Then $u_2(x,t)>u_1(x,t)$ for $x\in\R$ (resp. $x\in\Z$), $t>s$.

\item[(3)] If $u_0\in l^{\infty,+}(\R)$ (resp. $u^0\in l^{\infty,+}(\Z)$, then $u(x,t;s,u_0)$ (resp. $u(t;s,u^0)$) exists and
$u(\cdot,t;s,u_0)\ge 0$ (resp. $u(t;s,u^0)\ge 0$) for all $t\ge s$.
\end{itemize}
\end{proposition}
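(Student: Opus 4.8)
\emph{Proof proposal.} The first move is to reduce everything to the lattice system \eqref{main-eqn} on $\Z$: for each fixed $x_0\in\R$ the values $\{v(x_0+n,t):n\in\Z\}$ of a solution (resp.\ sub-/super-solution) of \eqref{main-eqn2} form a solution (resp.\ sub-/super-solution) of \eqref{main-eqn}, and the orderings $\le,\gg$ on $\R$ hold iff they hold on every translate $x_0+\Z$; so it suffices to treat \eqref{main-eqn}. Write $(Hw)_i:=w_{i+1}-2w_i+w_{i-1}$. For part (1), set $w=u_2-u_1$. Since $f(t,\cdot)$ is Lipschitz by (H0) and $u_1,u_2$ are bounded, we may write $u_{2,i}f(t,u_{2,i})-u_{1,i}f(t,u_{1,i})=a_i(t)w_i(t)$ with $\sup_{i,t}|a_i(t)|\le L$ for some $L\ge0$, so that $\dot w_i\ge (Hw)_i+a_i(t)w_i$ a.e.\ in $t$ and $w(\cdot,s)\ge0$ (I read the hypothesis as comparison at the initial time $s$). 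The obstacle is that $w(\cdot,t)$ need not attain its infimum over $i\in\Z$, so the classical first-touching argument is not directly available. I would circumvent this by the perturbation $v_i(t)=w_i(t)+\epsilon e^{(L+2)(t-s)}\cosh(\eta i)$ with $\epsilon>0$ small and $\eta>0$ small enough that $2(\cosh\eta-1)\le1$: since $(H\cosh(\eta\cdot))_i=2(\cosh\eta-1)\cosh(\eta i)\ge0$, a direct computation gives $\dot v_i\ge (Hv)_i+a_i(t)v_i+\epsilon e^{(L+2)(t-s)}\cosh(\eta i)$ a.e., a strict inequality with a positive gap, and $v(\cdot,s)>0$. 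Moreover $v_i(t)\to+\infty$ as $|i|\to\infty$ uniformly on every $[s,T']\subset[s,T)$, so for $\epsilon$ small (depending on $T'$) the minimum $\bar m(t):=\min_{i\in\Z}v_i(t)$ is attained on a fixed finite index set, hence $\bar m$ is continuous on $[s,T']$. If $\bar m$ ever reached $0$, let $t^*\in(s,T']$ be the first such time; choosing $\tau^*\in(s,t^*)$ close to $t^*$ with $\bar m'(\tau^*)$ existing and $<0$ (possible since $\bar m>0$ on $[s,t^*)$ and $\bar m(t^*)=0$), one has $\bar m'(\tau^*)=\dot v_{\bar\imath}(\tau^*)$ at a minimizer $\bar\imath$, while $(Hv)_{\bar\imath}(\tau^*)\ge0$ at a minimizer, so the differential inequality forces $\dot v_{\bar\imath}(\tau^*)\ge -L\bar m(\tau^*)+\epsilon>0$ once $\bar m(\tau^*)<\epsilon/L$, contradicting $\bar m'(\tau^*)<0$. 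Hence $v>0$ on $[s,T')$; fixing $i,t$ and letting $\epsilon\to0$ gives $w_i(t)\ge0$, i.e.\ $u_1\le u_2$ on $[s,T)$.

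For part (2), with $w=u_2-u_1$ the hypothesis gives the strict inequality $\dot w_i>(Hw)_i+a_i(t)w_i$ for a.e.\ $t>s$, together with $w(\cdot,s)\ge0$. The argument of part (1) used only the non-strict form of this inequality, so it already yields $w\ge0$. If now $w_{i_0}(t_0)=0$ for some $i_0$ and some $t_0>s$, put $g(t):=w_{i_0}(t)\ge0$; using $w_{i_0\pm1}\ge0$, $|a|\le L$, and $g\ge0$ one obtains $g'(\tau)>-(L+2)g(\tau)$ for a.e.\ $\tau>s$, hence $e^{(L+2)\tau}g(\tau)$ is strictly increasing on $[s,\infty)$, so $0=e^{(L+2)t_0}g(t_0)>e^{(L+2)s}g(s)\ge0$, a contradiction. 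Therefore $w_i(t)>0$ for all $i$ and all $t>s$.

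For part (3), local existence and uniqueness of $u(\cdot\,;s,u^0)$ in $l^\infty(\Z)$ is the Picard iteration already invoked in the introduction via (H0). Since $u\equiv0$ solves \eqref{main-eqn} (hence is both a sub- and a super-solution), part (1) applied with $u_1\equiv0$ and $u_2=u(\cdot\,;s,u^0)$ (a solution, so a super-solution, bounded on each compact subinterval of its interval of existence) gives $u_i(t;s,u^0)\ge0$ there. With $C:=\max\{M_0,\|u^0\|_\infty\}>0$ we have $f(t,C)<0$ by (H1), so the constant $C$ is a super-solution; part (1) with $u_1=u(\cdot\,;s,u^0)$, $u_2\equiv C$ gives $u_i(t;s,u^0)\le C$. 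Thus the solution remains in the bounded set $\{0\le u\le C\}$, which precludes finite-time blow-up and yields existence for all $t\ge s$; the bound being uniform over translates, the reduction above returns the corresponding statements for \eqref{main-eqn2}.

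The only genuine difficulty is in part (1): because $\Z$ is not compact, $w(\cdot,t)$ may fail to attain its infimum, so the usual maximum-principle comparison cannot be run verbatim. The weight $\cosh(\eta i)$ is the device that fixes this — it forces the infimum to be attained on a finite set (so that $\bar m$ is continuous and a first-touching time makes sense) and, combined with the factor $\epsilon e^{(L+2)(t-s)}$, it makes the differential inequality strictly positive, which is exactly what is needed to contradict a hypothetical touching of $0$. Once part (1) is established, parts (2) and (3) reduce to elementary Gronwall/ODE arguments.
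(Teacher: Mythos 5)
Your proposal is correct; parts (2) and (3) essentially mirror the paper's argument, but part (1) is proved by a genuinely different device. The paper sets $w=e^{ct}(u_2-u_1)$ with $c$ so large that the zero‑order coefficient $p(x,t)=a(x,t)-2+c$ is uniformly positive, restricts to a short time window $[s,T_0]$ with $(T_0-s)(2+p_0)<1$, and supposes $w_{\inf}<0$; taking a \emph{minimizing sequence} $(x_n,t_n)$ and feeding the integral form of the differential inequality into it yields $w_{\inf}\ge(t^0-s)(2+p_0)\,w_{\inf}>w_{\inf}$, a contradiction, and iterating over short windows covers $[s,T)$. You instead add the spatial weight $\epsilon e^{(L+2)(t-s)}\cosh(\eta i)$: the hyperbolic‑cosine factor satisfies $(H\cosh(\eta\cdot))_i=2(\cosh\eta-1)\cosh(\eta i)\ge 0$, the temporal factor absorbs the worst of $H$ and the linearized coefficient, and the whole perturbation diverges as $|i|\to\infty$, so the infimum of $v=w+\text{(weight)}$ is attained on a fixed finite index set and is continuous, making a genuine first-touching argument available. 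The two routes are interchangeable here: the paper's minimizing-sequence trick avoids any weight but needs the short-time restriction and a subsequent continuation; your barrier trick avoids the short-time restriction but needs the growth of $\cosh(\eta i)$ to localize the infimum. One small point worth tightening: you should state that you choose $\tau^*\in(\tau,t^*)$ in a set of positive measure where simultaneously $\bar m'$ exists and is negative, each $v_j$ ($|j|\le I$) is differentiable, and the differential inequality holds — all of which individually hold a.e. — and that $\tau$ is chosen close enough to $t^*$ that $\bar m<\epsilon/L$ on $[\tau,t^*]$ by continuity; with those qualifiers in place the identity $\bar m'(\tau^*)=\dot v_{\bar\imath}(\tau^*)$ at a minimizer and the contradiction are watertight.
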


\begin{proof} We prove the proposition for \eqref{main-eqn2}. It can be proved similarly for \eqref{main-eqn}.

\smallskip

(1) We prove (1)  by modifying the arguments of \cite[Proposition 2.4]{HuShVi08}.

Let $w(x,t)=e^{ct}(u_2(x,t)-u_1(x,t))$, where $c$ is a constant to be determined later. Then for any given $ x\in\R$,
there is a measurable subset $E$ of $[s,T]$ with Lebesgue measure $0$ such that
\begin{align}\label{difference}
\pa_tw(x,t)&\geq Hw(x,t)+(a(x,t)+c)w(x,t)\nonumber\\&=w(x+1,t)+w(x-1,t)+(a(x,t)-2+c)w(x,t)
\end{align}
for  $t\in[s,T]\setminus E$, where
$$a(x,t)=f(t,u_2(x,t))+u_1(x,t)\int^1_0f_u(t,su_1(x,t)+(1-s)u_2(x,t))ds\quad\mbox{ for }x\in\R,\,t\in[s,T].$$
Let $p(x,t)=a(x,t)-2+c$. By the boundedness of $u_1$ and $u_2$, we can choose $c>0$ such that
$$\inf _{(x,t)\in\R\times [s,T]}p(x,t)>0.$$
We claim that  $w(x,t)\geq 0$ for $x\in\R$ and $t\in[s,T]$.

Let $p_0=\sup_{(x,t)\in\R\times [s,T]}p(x,t)$. It suffices to prove the claim for $x\in\R$ and $t\in(s,T_0]$ with $T_0=s+\min(T-s,\frac{1}{p_0+2})$. Assume that there are $\tilde x\in\R$ and $\tilde t\in(s,T_0]$ such that $w(\tilde x,\tilde t)<0$. Then there is $t^0\in(s,T_0)$ such that
$$w_{\inf}:=\inf_{(x,t)\in\R\times [s,t^0]}w(x,t)<0.$$ Observe that there are $x_n\in\R$ and $t_n\in(s,t^0]$ such that
$$w(x_n,t_n)\to w_{\inf}\quad\mbox{ as }\,n\to\infty.$$
By \eqref{difference}  and the fundamental theorem of calculus for Lebesgue integrals, we get
\begin{align}
w(x_n,t_n)-w(x_n,s)&\geq\int^{t_n}_s[w(x_n+1,t)+w(x_n-1,t)+p(x_n,t)w(x_n,t)]dt \nonumber\\&\geq\int^{t_n}_s[2w_{\inf}+p(x_n,t)w_{\inf}]dt \nonumber\\&\geq (t^0-s)(2+p_0)w_{\inf}\quad\quad\mbox{ for }\,n\geq1.\nonumber
\end{align}
Note that $w(x_n,s)\geq0$, we then have
$$w(x_n,t_n)\geq (t^0-s)(2+p_0)w_{\inf}\quad\quad\mbox{ for }\,n\geq1.$$
Letting $n\to\infty$, we obtain
$$w_{\inf}\geq (t^0-s)(2+p_0)w_{\inf}>w_{\inf}.$$
A contradiction. Hence the claim is true and $u_1(x,t)\leq u_2(x,t)$ for $x\in\R$ and $t\in[s,T]$.

(2) By the similar arguments as getting \eqref{difference}, we can find $c$, $\mu>0$ such that
for any given $x\in\R$,
$$\pa_t w(x,t)>w(x+1,t)+w(x-1,t)+\mu w(x,t)\quad\mbox{ for }\, a.e.\, \, t>s,$$
where $w(x,t)=e^{ct}(u_2(x,t)-u_1(x,t))$. Then we have that for any given $x\in\R$,
$$
w(x,t)>w(x,s)+\int_s ^t \big(w(x+1,\tau)+w(x-1,\tau)+\mu w(x,\tau)\big)d\tau.
$$
By the arguments in (1), $w(x,t)\ge 0$ for all $x\in\R$ and $t\ge s$. It then follows that
$w(x,t)>w(x,s)\ge 0$ and hence $u_2(x,t)>u_1(x,t)$ for all $x\in\R$ and $t>s$.

(3) By (1) and (H1), for any $u_0\in l^{\infty,+}(\R)$, $0\le u(\cdot,t;s,u_0)\le \max\{\|u_0\|,M_0\}$ for all $t>s$ in the existence interval of $u(\cdot,t;s,u_0)$. It then follows that $u(\cdot,t;s,u_0)$ exists and $u(\cdot,t;s,u_0)\ge 0$ for all $t\ge s$.
\end{proof}

\begin{proposition}
\label{convergence-prop}
Suppose that $u_{0n},u_0\in l^{\infty,+}(\R)$ $(n=1,2,\cdots)$ with $\{\|u_{0n}\|\}$ being  bounded.
If $u_{0n}(x)\to u_0(x)$ as $n\to\infty$
 uniformly in $x$ on bounded sets,
{ then for each $t>0$}, $u(x,s+t;s,u_{0n})- u(x,s+t;s,u_0)\to 0$ as $n\to\infty$ uniformly in $x$ on bounded sets and $s\in\R$.
\end{proposition}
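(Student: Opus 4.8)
The plan is to compare the difference of the two solutions with the flow of the associated \emph{linearized} lattice equation, and then to split the initial error into a piece that is globally small — controlled by the hypothesis $u_{0n}\to u_0$ on bounded sets — and a piece that is merely bounded but supported far away; the first piece gets absorbed by a Gronwall-type constant super-solution, while the second piece cannot reach a prescribed bounded set of positions within the finite time $t$ by more than $\ep$, thanks to the super-exponential spatial decay of the discrete heat semigroup.

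Concretely, I would fix $s\in\R$ (all estimates below being uniform in $s$) and write $v_n(x,t)=u(x,s+t;s,u_{0n})$, $v(x,t)=u(x,s+t;s,u_0)$, which solve \eqref{main-eqn2} with nonlinearity $(t,u)\mapsto uf(s+t,u)$ and initial data $u_{0n}$, $u_0$. By Proposition \ref{comparison}(3) and its proof, $0\le v_n,v\le K:=\max\{\sup_n\|u_{0n}\|,M_0\}$ for all $t\ge 0$. Setting $g(t,u)=uf(t,u)$, assumptions (H0)--(H2) give that $g_u(t,u)=f(t,u)+uf_u(t,u)\le f(t,u)\le f(t,0)\le \|f(\cdot,0)\|_{L^\infty(\R)}=:L$ on $\R\times[0,K]$ (using $uf_u(t,u)\le 0$ and $f(t,\cdot)$ nonincreasing for $u\ge0$), and that $g_u$ is bounded there. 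Hence $w_n:=v_n-v$ solves the linear lattice equation
\begin{equation*}
\pa_t w_n(x,t)=w_n(x+1,t)+w_n(x-1,t)+(a_n(x,t)-2)w_n(x,t),\qquad w_n(x,0)=u_{0n}(x)-u_0(x),
\end{equation*}
with $a_n(x,t)=\int_0^1 g_u\big(s+t,\theta v_n(x,t)+(1-\theta)v(x,t)\big)\,d\theta$ bounded and $a_n\le L$, all bounds independent of $n$ and $s$.

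Next I would dominate $|w_n|$ by the linearized flow: letting $W=W_n$ solve $\pa_tW=W(\cdot+1,t)+W(\cdot-1,t)+(L-2)W$ with nonnegative data $W(\cdot,0)=|u_{0n}-u_0|$, the fact that $W\ge 0$ and $L\ge a_n$ makes $W$ a super-solution of the equation satisfied by $\pm w_n$, so the comparison argument of Proposition \ref{comparison}(1) (applied in its linearized form) yields $|w_n|\le W$. Now fix $R>0$, $\ep>0$ and $t>0$; for $R'>R$ write $|u_{0n}-u_0|=h_n^{(1)}+h_n^{(2)}$ with $h_n^{(1)}$ supported in $\{|x|\le R'\}$ and $h_n^{(2)}$ in $\{|x|>R'\}$, and split $W=W^{(1)}+W^{(2)}$ accordingly by linearity. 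Since $0\le h_n^{(1)}\le\delta_n:=\sup_{|x|\le R'}|u_{0n}(x)-u_0(x)|$ everywhere and the constant-in-$x$ function $\delta_ne^{L\tau}$ solves the linear equation, comparison gives $W^{(1)}(\cdot,t)\le\delta_n e^{Lt}$, with $\delta_n\to0$ by hypothesis (for this fixed $R'$). Since $0\le h_n^{(2)}\le 2K\,\mathbf 1_{\{|x|\ge R'\}}$ for every $n$, comparison with the exponential super-solution $\Psi(x,\tau)=2Ke^{\lambda\tau}\min\{1,e^{|x|-R'}\}$ — which, for $\lambda$ chosen large depending only on $L$, is readily checked to be a super-solution of the linear equation and dominates $h_n^{(2)}$ at $\tau=0$ — gives $W^{(2)}(x,t)\le 2Ke^{\lambda t}e^{R-R'}$ for $|x|\le R$, which tends to $0$ as $R'\to\infty$, uniformly in $s$ and $n$. (This is the quantitative form of the super-exponential decay in the spatial index of the fundamental solution of $v\mapsto v(\cdot+1)+v(\cdot-1)-2v$ along each coset $x+\Z$.) Choosing $R'$ so that $2Ke^{\lambda t}e^{R-R'}\le\ep/2$ and then $N$ so that $\delta_ne^{Lt}\le\ep/2$ for $n\ge N$ yields $|w_n(x,t)|\le\ep$ for $|x|\le R$, $n\ge N$ and all $s\in\R$, which is the assertion.

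The step I expect to be the crux is the estimate on $W^{(2)}$: because $\|u_{0n}-u_0\|$ need not tend to $0$, any argument based only on the sup norm (e.g.\ a bare Gronwall inequality for $\|w_n(\cdot,t)\|$) fails, and one genuinely has to use the decay of the discrete heat kernel — here packaged as the explicit exponential super-solution $\Psi$ — to prevent the far-field error from invading the fixed window $\{|x|\le R\}$ within the finite horizon $t$, and to make this quantitative bound uniform in the shift $s$. Everything else (the a priori $L^\infty$ bound, the linearization, the comparison principle in linearized form, the handling of $W^{(1)}$) is routine and follows the pattern already used in Proposition \ref{comparison}.
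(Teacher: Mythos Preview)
Your proof is correct, but it takes a different route from the paper's. The paper linearizes exactly as you do to obtain $\pa_t v^n = Hv^n + a_n v^n$ with $a_n$ uniformly bounded, but then works in the exponentially weighted space $X(\lambda)=\{u:\R\to\R:\;u(\cdot)e^{-\lambda|\cdot|}\in l^\infty(\R)\}$: writing $v^n$ via the variation-of-constants formula for the semigroup $e^{Ht}$ on $X(\lambda)$ and applying Gronwall gives $\|v^n(\cdot,t;s)\|_{X(\lambda)}\le C(t)\|v^n(\cdot,0;s)\|_{X(\lambda)}$, and the hypothesis ``locally uniform convergence plus global boundedness'' is precisely what makes $\|u_{0n}-u_0\|_{X(\lambda)}\to 0$. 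Your argument reaches the same conclusion by a more bare-hands comparison route: you dominate $|w_n|$ by the linear flow with constant top coefficient $L$, then split the initial error into a near piece (killed by $\delta_n e^{Lt}$) and a far piece (killed by the explicit barrier $\Psi(x,\tau)=2Ke^{\lambda\tau}\min\{1,e^{|x|-R'}\}$). The weighted-norm approach is slicker---one Gronwall replaces the whole near/far splitting, and the exponential weight automatically encodes the decay that your barrier $\Psi$ supplies by hand---while your approach is more elementary, relying only on the comparison principle already proved in Proposition~\ref{comparison} and avoiding any discussion of semigroups on $X(\lambda)$. Both arguments are really the same observation, that the discrete heat kernel decays fast enough in the spatial index that far-away errors cannot contaminate a fixed window in finite time, packaged differently.
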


\begin{proof}
It can be proved by the similar arguments in \cite[Proposition 3.3]{KoSh}. For the completeness, we provide a proof in the following.

 Let $v^n(x,t;s)=u(x,s+t;s,u_{0n})-u(x,s+t;s,u_0)$.
Then $v^n(t,x;s)$ satisfies
\begin{equation*}
v^n_t(x,t;s)=H v^n(x,t;s)+a_n(t,x;s)v^n(x,t;s),
\end{equation*}
where
\begin{align*}
a_n(t,x;s)=&f(s+t,u(x,s+t;s,u_{0n}))\\
&+u(x,s+t;s,u_{0})\cdot \int_0^1  f_u(s+t,r u(x,s+t;s,u_{0n})+
(1-r)u(x,s+t;s,u_0))dr.
\end{align*}
Observe that $\{a_n(t,x;s)\}$ is uniformly bounded.

Take a $\lambda>0$. Let
$$
X(\lambda)=\{u:\R\to\R\,|\, u(\cdot) e^{-\lambda |\cdot|}\in l^\infty(\R)\}
$$
with norm $\|u\|_\lambda=\|u(\cdot)e^{-\lambda |\cdot|}\|_{l^\infty(\R)}$.
Note that $H:X(\lambda)\to X(\lambda)$ generates an analytic  semigroup,
and there are $M>0$ and $\omega>0$ such that
$$
\|e^{H t}\|_{X(\lambda)}\leq M e^{\omega t}\quad \forall { t\ge 0}.
$$
Hence
\begin{align*}
v^n(\cdot,t;s)=&e^{ Ht}v^n(\cdot,0;s)+\int_0^t e^{ H(t-\tau)}a_n(\tau,\cdot;s)v^n(\cdot,\tau;s)
d\tau
\end{align*}
and
then
\begin{align*}
\|v^n(\cdot,t;s)\|_{X(\lambda)}&\leq M e^{\omega t}\|v^n(\cdot,0;s)\|_{X(\lambda)}+
M\sup_{s\in\R,\tau\in[0,t],x\in\R} |a_n(\tau,x;s)|\int_0^ t e^{\omega(t-\tau)}\|v^n(\cdot,\tau;s)\|_{X(\lambda)}d\tau.
\end{align*}
By Gronwall's inequality,
$$
\|v^n(\cdot,t;s)\|_{X(\lambda)}\leq e^{(\omega+M\sup_{s\in\R,\tau\in[0,t],x\in\R} |a_n(\tau,x;s)|)t}\big(M\|v^n(\cdot,0;s)\|_{X(\lambda)}\big).
$$
Note that $\|v^n(\cdot,0;s)\|_{X(\lambda)}\to 0$ uniformly in $s\in\R$. It then
follows that
$$
\|v^n(\cdot,t;s)\|_{X(\lambda)}\to 0\quad {\rm as}\quad n\to\infty
$$
uniformly in $s\in\R$ and then
$$
u(x,s+t;s,u_{0n})-u(x,s+t;s,u_0)\to 0 \quad {\rm as}\quad
n\to\infty
$$
uniformly in $x$ on bounded sets and $s\in\R$.
\end{proof}

Next, we introduce the so called part metric and prove the decreasing property of the part metric between two
positive solutions as time increases.
For given $u,v\in l^{\infty, +}(\Z)$ (resp. $u,v\in l^{\infty,+}(\R)$),  if
$$\{\alpha>1\, :\, \frac{1}{\alpha}v\leq u\leq \alpha v  \big\}\not =\emptyset,
$$
we define $\rho(u,v)$ by
$$\rho(u,v):=\inf\big\{\ln\alpha:\alpha>1,\frac{1}{\alpha}v\leq u\leq \alpha v  \big\}$$
and call
$\rho(u,v)$ the {\it part metric between $u$ and $v$}.

 Observe that if $u,v\in l^{\infty,++}(\Z)$ (resp. $u,v\in l^{\infty,++}(\R)$), then $\rho(u,v)$ is well defined.
Observe also that if $u,v\in l^{\infty,+}(\Z)$ (resp. $u,v\in l^{\infty,+}(\R)$), and $u_i>0,v_i>0$ for all $i\in\Z$ (resp. $u(x)>0,v(x)>0$ for all $x\in\R$), $\inf_{i\le i_0}u_i>0$, $\inf_{i\le i_0}v_i>0$ for any $i_0\in \Z$ (resp. $\inf_{x\le x_0}u(x)>0$, $\inf_{x\le x_0}v(x)>0$ for any $x_0\in\R$), and
$\lim_{i\to\infty}\frac{u_i}{e^{-\mu i}}=\lim_{i\to\infty}\frac{v_i}{e^{-\mu i}}=1$ (resp. $\lim_{x\to\infty}\frac{u(x)}{e^{-\mu x}}=1$,
 $\lim_{x\to\infty} \frac{v(x)}{e^{-\mu x}}=1$) for some $\mu>0$, then $\rho(u,v)$ is also  well defined.

\begin{proposition}[Part metric]
\label{part-metric}
\begin{itemize}
\item[(1)] For given  $u_0,v_0\in l^{\infty,+}(\R)$ with $u_0\not =v_0$, if $\rho(u_0,v_0)$ is well defined, then  $\rho(u(\cdot,t;s,u_0),u(\cdot,t;s,v_0))$ is also well defined for
every $t>s$ and $\rho(u(\cdot,t;s,u_0),u(\cdot,t;s,v_0))$  decreases as $t$ increases.

\item[(2)] For any $\epsilon>0$, $\sigma>0$, $M>0$,  and $\tau>0$  with $\epsilon<M$ and
$\sigma\le \ln \frac{M}{\epsilon}$, there is $\delta>0$   such that
for any $u_0,v_0\in l^{\infty,++}(\R)$ with $\epsilon\le u_0(x)\le M$, $\epsilon\le v_0(x)\le M$ for $x\in\R$ and
$\rho(u_0,v_0)\ge\sigma$, there holds
$$
\rho(u(\cdot,\tau+s;s,u_0),u(\cdot,\tau+s;s,v_0))\le \rho(u_0,v_0)- \delta\quad\forall\,\, s\in\R.
$$

\item[(3)] Suppose that $u_1(x,t)$ and $u_2(x,t)$ are two distinct positive entire solutions of \eqref{main-eqn2} and
that  there are $c(t)$ and $\mu>0$ such that
$$
\lim_{x\to\infty}\frac{u_i(x+c(t),t)}{e^{-\mu x}}=1
$$
uniformly in $t$ ($i=1,2$) and for any $x_0\in \R$,
$$
\inf_{x\le x_0,t\in\R}u_i(x+c(t),t)>0
$$
for $i=1,2$. Then for any $\tau>0$ and $T\in\R$, there is $\delta>0$ such that
$$
\rho(u_1(\cdot,s+\tau),u_2(\cdot,s+\tau))<\rho(u_1(\cdot,s),u_2(\cdot,s))-\delta
$$
for $s\le T$.
\end{itemize}
\end{proposition}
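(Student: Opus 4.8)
The plan is to derive all three parts from one mechanism, the strict sub-homogeneity of the reaction. Since $u\mapsto f(t,u)$ is strictly decreasing on $[0,\infty)$ by (H1), for any $\alpha>1$ and any nonnegative profile $w$ one has $\alpha\bigl(w\,f(t,w)\bigr)\ge(\alpha w)f(t,\alpha w)$ pointwise, strictly wherever $w>0$; hence $\alpha\,u(\cdot,\cdot;s,w_0)$ is a super-solution of \eqref{main-eqn2} and $\alpha^{-1}u(\cdot,\cdot;s,w_0)$ a sub-solution, each strict where the solution is positive. For~(1), take $\alpha_0=e^{\rho(u_0,v_0)}$; the infimum in the definition of $\rho$ is attained, so $\alpha_0^{-1}v_0\le u_0\le\alpha_0 v_0$. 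Combining this with Proposition~\ref{comparison}(1) yields $\alpha_0^{-1}u(\cdot,t;s,v_0)\le u(\cdot,t;s,u_0)\le\alpha_0\,u(\cdot,t;s,v_0)$ for $t\ge s$, so $\rho(u(\cdot,t;s,u_0),u(\cdot,t;s,v_0))$ is well-defined and $\le\rho(u_0,v_0)$; monotonicity in $t$ follows by running the same estimate from an intermediate time $t_1\in(s,t)$ in place of $s$. The part metric stays strictly positive because \eqref{main-eqn2} (likewise \eqref{main-eqn}) is an ODE in $l^\infty$ with locally Lipschitz right-hand side, hence uniquely solvable forward and backward in time, so two distinct solutions never merge.

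For~(2) I would quantify the super-solution defect. Writing $U(\cdot,t)=u(\cdot,t;s,v_0)$, $V(\cdot,t)=u(\cdot,t;s,u_0)$ and $\alpha_0=e^{\rho(u_0,v_0)}$, put $z=\alpha_0 U-V\ge0$; subtracting the equations and applying the mean value theorem to $u\mapsto uf(t,u)$ gives $z_t=Hz+\tilde a(x,t)\,z+g^*(x,t)$ with $\tilde a$ bounded and $g^*=\alpha_0 U\bigl(f(t,U)-f(t,\alpha_0 U)\bigr)\ge0$ the sub-homogeneity defect. The bounds $\epsilon\le u_0,v_0\le M$ give, via Proposition~\ref{comparison}, uniform-in-$s$ bounds $0<\epsilon'\le U,V\le M'$ on $[s,s+\tau]$, while $\sigma\le\rho(u_0,v_0)\le\ln(M/\epsilon)$ forces $e^\sigma\le\alpha_0\le M/\epsilon$; these, together with (H1)--(H2), should yield a uniform lower bound $g^*\ge c_0>0$. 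The variation-of-constants formula, positivity of $e^{tH}$, the fact that $e^{tH}$ preserves constant functions, and boundedness of $\tilde a$ then give $z(\cdot,s+\tau)\ge\eta>0$ uniformly in $x$ and $s$; combined with $U(\cdot,s+\tau)\le M'$ and $\alpha_0\le M/\epsilon$ this becomes $V(\cdot,s+\tau)\le\alpha_0(1-\theta)\,U(\cdot,s+\tau)$ with $\theta=\theta(\epsilon,\sigma,M,\tau)>0$. The symmetric estimate from $\alpha_0 V-U\ge0$ gives the reverse one-sided inequality, whence $\rho(u(\cdot,s+\tau;s,u_0),u(\cdot,s+\tau;s,v_0))\le\rho(u_0,v_0)-\delta$ with $\delta=-\ln(1-\theta)$.

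Part~(3) is the front analogue of~(2). First, $\rho(u_1(\cdot,s),u_2(\cdot,s))$ is well-defined: the asymptotics $u_i(x,t)\sim e^{-\mu(x-c(t))}$, uniform in $t$, force $u_1/u_2\to1$ ahead of the front uniformly, while behind it $\inf_{x\le x_0,t}u_i(x+c(t),t)>0$ and the a priori bound on entire solutions keep $u_1/u_2$ in a fixed compact subset of $(0,\infty)$; moreover, since part~(1) makes $\rho(u_1(\cdot,s),u_2(\cdot,s))$ nonincreasing in $s$, it is bounded below by $\sigma_T:=\rho(u_1(\cdot,T),u_2(\cdot,T))>0$ for all $s\le T$, playing the role of $\sigma$. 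The supremum defining $\alpha_0(s):=e^{\rho(u_1(\cdot,s),u_2(\cdot,s))}$ is therefore attained, up to translation by $c(s)$, on a window of $x$-values bounded uniformly in $s$, on which both profiles are bounded below uniformly in $s$; on that window the computation of~(2) reproduces the uniform drop, and the exponentially small tails at $+\infty$ are handled directly from the fact that $u_1$ and $u_2$ share the leading order $e^{-\mu(x-c(t))}$ there. Collecting constants, one obtains a single $\delta>0$ valid for all $s\le T$.

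I expect the crux to be the uniform lower bound $g^*\ge c_0>0$ on the sub-homogeneity defect in~(2) and its analogue in~(3). Strictness of $f(t,U)>f(t,\alpha_0 U)$ is immediate from $f_u<0$, but quantifying it away from zero on the relevant fixed compact range of $u$-values \emph{uniformly in $t$} --- where the time heterogeneity could spoil equicontinuity of $f_u(t,\cdot)$ --- is the delicate point; it follows from the linear two-sided bounds (H2) when $\rho(u_0,v_0)$ is large, and for $\rho(u_0,v_0)$ in a bounded range one must close the gap by a compactness argument against the strict comparison inequality underlying part~(1) (or by adapting the corresponding estimate for the nonlocal-dispersal model \eqref{eqn-nonlocal}). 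A secondary chore is keeping every constant genuinely independent of $s$, and, in~(3), the bookkeeping of the tails at $+\infty$.
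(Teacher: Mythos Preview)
Your approach is the paper's: all three parts rest on the strict sub-homogeneity $\alpha w f(t,w)\ge(\alpha w)f(t,\alpha w)$ for $\alpha>1$, quantified into a uniform drop of the part metric via comparison. In~(2) the paper unpacks your variation-of-constants estimate by writing the explicit defect constant $\delta_1=\epsilon_1^2 e^\sigma(1-e^\sigma)\sup_{t\in\R,\,u\in[\epsilon_1,M_1M/\epsilon]}f_u(t,u)$ and showing $u(\cdot,t;s,\alpha^* u_0)\le\alpha^* u(\cdot,t;s,u_0)-\tfrac{\delta_1}{2}(t-s)$ on a short interval $[s,s+\tau_1]$, then converting the additive gain into a multiplicative one. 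In~(3) the paper does not try to locate where the supremum of the ratio is attained; instead it fixes $M_\epsilon$ so that for $x\ge M_\epsilon$ the shared asymptotic already forces $u_1\le\frac{1+\epsilon}{1-\epsilon}u_2$, while for $x\le M_\epsilon+c(t)$ both profiles are uniformly bounded below and the pointwise ODE inequality $w_t\ge p(x,t)w+\delta_\epsilon$ (obtained by dropping the nonnegative neighbor terms $w(x\pm1,t)$) is integrated directly to get an additive gap $\tilde\delta_\epsilon$ at time $s+\tau$. Your ``window where the sup is attained'' framing amounts to the same split but is slightly less direct, since you cannot literally invoke~(2) on a window (it needs global lower bounds); you must redo the defect estimate there, which is exactly what the paper does. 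Your flagged concern about the uniform-in-$t$ lower bound on the defect is the right one; the paper addresses it simply by taking the displayed supremum of $f_u$ and using it as a negative constant.
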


\begin{proof}
(1) Suppose that $u_0,v_0\in l^{\infty,+}(\R)$ are such that $u_0\not =v_0$ and $\rho(u_0,v_0)$ is well defined. Then there is
 $\alpha>1$ such that $\rho(u_0,v_0)=\ln\alpha$ and
 $$
 \frac{1}{\alpha} v_0\le u_0\le \alpha v_0.
 $$
 By Proposition \ref{comparison}  and $f_u(t,u)<0$ for $u\geq0$ (which implies that $f(t,\alpha u)\le f(t,u)\le f(t,\frac{1}{\alpha}u)$), we have
$$
\frac{1}{\alpha} u(x,t;s,v_0)<u(x,t;s,\frac{1}{\alpha}v_0)\le u(x,t;s,u_0)\le u(x,t;s,\alpha v_0)<\alpha u(x,t;s,v_0)
$$
for all $x\in\R$, $t>s$. It then follows that
$$
\rho(u(\cdot,t;s,u_0),u(\cdot,t;s,v_0))<\rho(u_0,v_0)
$$
for any $t>s$ and then for any $t_2>t_1\ge s$,
\begin{align*}
\rho(u(\cdot,t_2;s,u_0),u(\cdot,t_2;s,v_0))&=\rho(u(\cdot,t_2;t_1,u(\cdot,t_1;s,u_0)),u(\cdot,t_2;t_1,u(\cdot,t_1;s,v_0)))\\
&<\rho(u(\cdot,t_1;s,u_0),u(\cdot,t_1;s,v_0)).
\end{align*}
(1) is thus proved.

(2) It can be proved by the similar arguments as in \cite[Proposition 3.4]{KoSh}. For the self-completeness, we provide a proof
in the following.

Let $\epsilon>0$, $\sigma>0$, $M>0$, and $\tau>0$ be given and $\epsilon<M$, $\sigma<\ln \frac{M}{\epsilon}$. First, note that by Proposition \ref{comparison},
 there are $\epsilon_1>0$ and $M_1>0$ such that
for any $u_0\in l^{\infty,++}(\R)$ with $\epsilon\le u_0(x)\le M$ for $x\in\R$, there holds
\begin{equation}
\label{part-metric-eq1}
\epsilon_1\le u(\cdot,t+s;s,u_0)\le M_1\quad \forall\,\, t\in[0,\tau],\,\, s\in\R.
\end{equation}
Let
\begin{equation}
\label{part-metric-eq2}
\delta_1=\epsilon_1^2 e^\sigma (1-e^\sigma)\sup_{t\in\R,u\in[\epsilon_1,M_1M/\epsilon]}f_u(t,u).
\end{equation}
Then $\delta_1>0$ and there is $0<\tau_1\le\tau$ such that
\begin{equation}
\label{part-metric-eq3-1}
\frac{\delta_1}{2}\tau_1<e^\sigma\epsilon_1
\end{equation}
and
\begin{equation}
\label{part-metric-eq3-2}
\Big|\frac{\delta_1}{2}tvf_u(t+s,w)\Big|+\Big|\frac{\delta_1}{2}tf(t+s,v-\frac{\delta_1}{2}t)\Big|\le\frac{\delta_1}{2}\quad \forall \,\,s\in\R,\,\, t\in [0,\tau_1],\,\, v,w\in[0,M_1M/\epsilon].
\end{equation}
Let
\begin{equation}
\label{part-metric-eq4}
\delta_2=\frac{\delta_1\tau_1}{2 M_1}.
\end{equation}
Then $\delta_2<e^\sigma$ and $0<\frac{\delta_2 \epsilon}{M}<1$. Let
\begin{equation}
\label{part-metric-eq5}
\delta=-\ln\big(1-\frac{\delta_2\epsilon}{M}\big).
\end{equation}
Then $\delta>0$. We prove that  $\delta$ defined in \eqref{part-metric-eq5}  satisfies the property in the proposition.

For any $u_0,v_0\in l^{\infty,++}(\R)$ with $\epsilon\le u_0(x)\le M$ and $\epsilon\le v_0(x)\le M$ for $x\in\R$ and
$\rho(u_0,v_0)\ge\sigma$,  there is $\alpha^*> 1$ such that  $\rho(u_0,v_0)=\ln \alpha^{*}$
and $\frac{1}{\alpha^{*}}u_0\leq v_0\leq \alpha^{*} u_0$. Note that $e^\sigma\le\alpha^*\le \frac{M}{\epsilon}$.
By (1),  $\rho(u(\cdot,t;s,u_0),u(\cdot,t;s,v_0))$ is non-increasing in $t>s$.
We prove that
$$
\rho(u(\cdot,s+\tau;s,u_0),u(\cdot,s+\tau;s,v_0))\le\rho(u_0,v_0)-\delta\quad\forall\,\, s\in\R.
$$

Let
 $$v(x,t)=\alpha^{*}u( x,t; s, u_0).$$
Note that $e^\sigma\le \alpha^*\le \frac{M}{\epsilon}$
and
\begin{align*}
v_{t}(x,t)&=H v(x,t)+v(x,t)f(t,u(x,t;s,u_0))\\
& =Hv(x,t)+v(x,t)f(t, v(x,t))+v(x,t)f(t, u(x,t;s,u_0))-v(x,t)f(t,v(x,t))\\
&\ge H v (x,t)+ v(x,t)f(t, v(x,t))+\delta_1\quad \forall\,  s<t\le s+\tau_1,\,\, s\in\R.
\end{align*}  This together with \eqref{part-metric-eq3-1}, \eqref{part-metric-eq3-2}
implies that 
$$
(v(x,t)-\frac{\delta_1}{2}(t-s))_t\ge H\big(v(x,t)-\frac{\delta_1}{2}(t-s)\big)+\big(v(x,t)-\frac{\delta_1}{2}(t-s)\big)f\big(t,v(x,t)-\frac{\delta_1}{2}(t-s)\big)
$$
for $s<t\le s+ \tau_1$.
Then by Proposition \ref{comparison} again,
$$
u(\cdot,t;s,\alpha^* u_0)\leq \alpha^* u(\cdot,t;s,u_0)-\frac{\delta_1}{2}(t-s)\quad {\rm for}\quad s<t\le s+\tau_1.
$$
By \eqref{part-metric-eq4},
$$
u(\cdot,s+\tau_1;s,v_0)\le (\alpha^*-\delta_2) u(\cdot,s+\tau_1;s,u_0).
$$
Similarly, it can be proved that
$$
\frac{1}{\alpha^*-\delta_2}u(\cdot,s+\tau_1;s,u_0)\le u(\cdot,s+\tau_1;s,v_0).
$$
It then follows that
$$
\rho(u(\cdot,s+\tau_1;s,u_0),u(\cdot,s+\tau_1;s,v_0))\le \ln(\alpha^*-\delta_2) =\ln\alpha^*+\ln (1-\frac{\delta_2}{\alpha^*})\le \rho(u_0,v_0)-\delta.
$$
and hence
$$
\rho(u(\cdot,s+\tau;s,u_0),u(\cdot,s+\tau;s,v_0))\le\rho(u(\cdot,s+\tau_1;s,u_0),u(\cdot,s+\tau_1;s,v_0))\le\rho(u_0,v_0)-\delta.
$$

(3) Without loss of generality, we assume that $T=0$ and  fix any $\tau>0$.  Let $\rho(t)=\rho(u_1(\cdot,t),u_2(\cdot,t))$. Then
there is $\alpha(t)>1$ such that $\rho(t)=\ln\alpha(t)$. We have
$$
\frac{1}{\alpha(0)}u_2(x,0)\le u_1(x,0)\le \alpha(0) u_2(x,0),\,\, \forall\,\, x\in\R.
$$
Note  that
\begin{equation}
  \label{limit-eq1-1}
  \lim_{x\to\infty}
\frac{u_i(x+c(t),t)} { e^{-\mu
x}}=1,
\end{equation}
 uniformly in $t$. This implies that for
 any $\epsilon>0$ with $\frac{1+\epsilon}{1-\epsilon}<\alpha(0)(\le \alpha(t)$ for $t\le 0)$,
there is $M_\epsilon>0$ such that
\begin{equation}
\label{part-metric-aux-eq1}
\frac{1-\epsilon}{1+\epsilon} \, u_2(x+c(t),t)\le
u_1(x+c(t),t)\le\frac{1+\epsilon}{1-\epsilon}\,  u_2(x+c(t),t)
\end{equation}
for $x\ge M_\epsilon$ and all $t$. Note also
that there is $\sigma_\epsilon>0$ such that for $x\le
M_\epsilon$ and all $t$, there holds,
\begin{equation}
\label{part-metric-aux-eq2}
u_i(x+c(t),t)\ge \sigma_\epsilon.
\end{equation}

For any given $s\le 0$, let
 $\tilde u(x,t)=\alpha(s) u_2(x,t)$. By \eqref{part-metric-aux-eq2},  there is $\delta_\epsilon>0$ such that
\begin{align}
\label{part-metric-aux-eq3}
\tilde u_t(x,t)&=H \tilde u(x,t)+\tilde
u(x,t)f(t, u_2(x,t))\nonumber\\
&\ge H \tilde u(x,t)+\tilde
u(x,t)f(t, \tilde u(x,t))+\delta_\epsilon
\end{align}
for $x\le M_\epsilon +c(t)$, $s\le t\le s+\tau$, and $s\le 0$.
Let $\hat u(x,t)=u(x,t;s,\alpha(s)u_2(\cdot,s))$. Note that
$$
\hat u_t(x,t)=H\hat u(x,t)+\hat u(x,t)f(t,\hat u(x,t))
$$
for all $x\in\R$ and  $\tilde u(x,t)> \hat u(x,t)$ for $x\in\R$ and $t\ge s$.
Let $w(x,t)=\tilde u(x,t)-\hat u(x,t)$. Then
\begin{align*}
w_t(x,t)&\ge w(x+1,t)+w(x-1,t)-2 w(x,t)+\tilde u(x,t)f(t,\tilde u(x,t))-\hat u(x,t)f(t,\hat u(x,t))+\delta_\epsilon\\
&\ge p(x,t)w(x,t)+\delta_\epsilon
\end{align*}
for $x\le M_\epsilon +c(t)$, where
$$
p(x,t)=-2+\Big[\tilde u(x,t)f(t,\tilde u(x,t))-\hat u(x,t)f(t,\hat u(x,t))\Big]/[\tilde u(x,t)-\hat u(x,t)].
$$
It then follows that
$$
w(x,t)\ge \int_s^t e^{\int_r^t (-2+p(x,\tau))d\tau}\delta_\epsilon dr
$$
for $x\le M_\epsilon+c(t)$.
This implies that there is $\tilde \delta_\epsilon>0$ such that
\begin{equation*}
\tilde u(x,s+\tau)\ge \hat u(x,s+\tau)+\tilde \delta_\epsilon
\end{equation*}
for $x\le M_\epsilon+c(s+\tau)$. It follows that
\begin{equation}
\label{part-metric-aux-eq4}
u_1(x,s+\tau)\le  \hat u(x,s+\tau)\le \alpha(s) u_2(x,s+\tau)-\tilde \delta_\epsilon
\end{equation}
for $x\le M_\epsilon+c(s+\tau)$.

By \eqref{part-metric-aux-eq1} and \eqref{part-metric-aux-eq4},  there is  $0<\delta<\alpha(0)(<\alpha(s))$ such that
$$
u_1(x,s+\tau)\le (\alpha(s)-\delta) u_2(x,s+\tau)$$
for $x\in\R$ and $s\le 0$. Similarly, we can prove that
$$
u_1(x,s+\tau)\ge \frac{1}{\alpha(s)-\delta}u_2(x,s+\tau)
$$
for all $x\in\R$ and $s\le 0$. (3) is thus proved.
\end{proof}

Finally, we present a technical lemma from  \cite{NaRo12}.
Let
$$
\bar f_T=\inf_{k\in \N}\frac{1}{T}\int_{(k-1)T}^{kT} f(\tau,0)d\tau.
$$

 \begin{lemma}
\label{technical-lemma}
\begin{itemize}
\item[(1)] Let $B\in L^{\infty}(\R)$. Then
$$\bar B_{\inf}=\sup_{A\in W^{1,\infty}(\R)} \essinf_{t\in\R}(A^{\prime}+B)(t).$$

\item[(2)]
For given $T>0$,
there is $A\in W^{1,\infty}((0,\infty))$ such that
$$
\essinf_{t\in (0,\infty)}\big(A^{'}(t)+f(t,0)\big)=\bar f_T.
$$


\item[(3)] $$\bar{f}^+_{\inf}=\lim_{T\to\infty}\inf_{t\geq 0}\frac{1}{T}\int^{t+T}_tf(\tau,0)d\tau
$$
and
$$\bar{f}_{\inf}=\lim_{T\to\infty}\inf_{t\in\R}\frac{1}{T}\int^{t+T}_tf(\tau,0)d\tau
$$

\end{itemize}
\end{lemma}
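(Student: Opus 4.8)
The plan is to prove the three parts by elementary real analysis; no PDE input is needed, only the boundedness of $f(\cdot,0)$ from (H1) and the definitions of the various averaged quantities.

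\emph{Part (1).} Write $\bar B_{\inf}=\liminf_{t-s\to\infty}\frac{1}{t-s}\int_s^t B(\tau)\,d\tau$. For the inequality ``$\ge$'', fix $A\in W^{1,\infty}(\R)$ and set $c=\essinf_{t\in\R}(A'+B)(t)$, so that $A'(t)+B(t)\ge c$ for a.e.\ $t$. Integrating from $s$ to $t$ and using that $A$ is absolutely continuous, $\frac{1}{t-s}\int_s^t B\ge c-\frac{A(t)-A(s)}{t-s}\ge c-\frac{2\|A\|_{L^\infty}}{t-s}$; letting $t-s\to\infty$ gives $\bar B_{\inf}\ge c$, hence $\bar B_{\inf}\ge\sup_A\essinf_t(A'+B)$. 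For ``$\le$'', fix $\epsilon>0$. By definition of the $\liminf$ there is $L>0$ with $\int_s^t(B-\bar B_{\inf}+\epsilon)\ge 0$ whenever $t-s\ge L$, and trivially this integral is $\ge-K$ for all $s\le t$, where $K:=(\|B\|_{L^\infty}+|\bar B_{\inf}|+\epsilon)L$. Setting $g:=B-\bar B_{\inf}+\epsilon$ and $G(t):=\int_0^t g$, this reads $G(s)\le G(t)+K$ for all $s\le t$, so $\bar G(t):=\sup_{s\le t}G(s)$ is finite, satisfies $G\le\bar G\le G+K$, is nondecreasing, and — being the running maximum of a function that is Lipschitz with constant $\|g\|_{L^\infty}$ — is itself Lipschitz with the same constant. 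Then $A:=\bar G-G$ lies in $W^{1,\infty}(\R)$ (it is Lipschitz and takes values in $[0,K]$), and for a.e.\ $t$ one has $A'(t)+B(t)=\bar G'(t)-g(t)+\bigl(g(t)+\bar B_{\inf}-\epsilon\bigr)=\bar G'(t)+\bar B_{\inf}-\epsilon\ge\bar B_{\inf}-\epsilon$, since $\bar G'\ge 0$ a.e. Thus $\sup_A\essinf_t(A'+B)\ge\bar B_{\inf}-\epsilon$ for every $\epsilon>0$, which gives the reverse inequality.

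\emph{Part (2).} Let $a_k:=\frac1T\int_{(k-1)T}^{kT}f(\tau,0)\,d\tau$ and let $\tilde f$ be the step function on $(0,\infty)$ with $\tilde f\equiv a_k$ on $[(k-1)T,kT)$. Define $A(t):=\int_0^t\bigl(\tilde f(\tau)-f(\tau,0)\bigr)\,d\tau$ for $t\ge 0$. Since $\int_{(k-1)T}^{kT}(\tilde f-f(\cdot,0))=0$ for every $k$, the function $A$ vanishes at every multiple of $T$, so $|A(t)|\le 2T\|f(\cdot,0)\|_{L^\infty}$; moreover $A'=\tilde f-f(\cdot,0)\in L^\infty$, hence $A\in W^{1,\infty}((0,\infty))$. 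Finally $A'(t)+f(t,0)=\tilde f(t)$ a.e., and because $\tilde f$ attains each value $a_k$ on a set of positive measure, $\essinf_{t>0}\bigl(A'(t)+f(t,0)\bigr)=\essinf_{t>0}\tilde f(t)=\inf_{k\in\N}a_k=\bar f_T$.

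\emph{Part (3).} For $T>0$ put $\phi^+(T):=\inf_{s\ge 0}\int_s^{s+T}f(\tau,0)\,d\tau$ and $\phi(T):=\inf_{s\in\R}\int_s^{s+T}f(\tau,0)\,d\tau$, so that $\frac1T\phi^+(T)=\inf_{t\ge 0}\frac1T\int_t^{t+T}f(\tau,0)\,d\tau$ and likewise for $\phi$. Both are superadditive: splitting a window of length $T_1+T_2$ at its interior point at distance $T_1$ from the left end, $\int_s^{s+T_1}f\ge\phi^+(T_1)$ and $\int_{s+T_1}^{s+T_1+T_2}f\ge\phi^+(T_2)$ (the second window starts at $s+T_1\ge 0$), whence $\phi^+(T_1+T_2)\ge\phi^+(T_1)+\phi^+(T_2)$, and similarly for $\phi$. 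Since $|\phi^+(T)|\le T\|f(\cdot,0)\|_{L^\infty}$ and $|\phi(T)|\le T\|f(\cdot,0)\|_{L^\infty}$, each is measurable (an infimum of continuous functions of $T$) and locally bounded, so the continuous Fekete lemma yields that $\lim_{T\to\infty}\frac1T\phi^+(T)=\sup_{T>0}\frac1T\phi^+(T)$ and $\lim_{T\to\infty}\frac1T\phi(T)=\sup_{T>0}\frac1T\phi(T)$ exist and are finite. On the other hand, rewriting the $\liminf$ defining $\bar f^+_{\inf}$ as a limit of infima over tails, and observing that ranging over all windows $[s,t]$ with $s\ge0$ and $t-s\ge L$ is the same as ranging over all $\tau\ge L$ and all such windows of exact length $\tau$, one obtains $\bar f^+_{\inf}=\lim_{L\to\infty}\inf_{\tau\ge L}\frac1\tau\phi^+(\tau)=\liminf_{T\to\infty}\frac1T\phi^+(T)$, which equals the limit just shown to exist; the argument for $\bar f_{\inf}$ is identical (drop the constraint $s\ge0$). (One may alternatively avoid continuous Fekete by a direct ``pack short windows inside a long good window'' estimate.)

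\emph{Main obstacle.} The only genuinely delicate step is the ``$\le$'' half of (1): one must manufacture a \emph{bounded} corrector $A$ with $A'+B$ essentially bounded below by $\bar B_{\inf}-\epsilon$, and the running maximum $\bar G$ does exactly this — it stays within distance $K$ of $G$ precisely because the defining $\liminf$, together with $B\in L^\infty$, gives the uniform lower bound $\int_s^t(B-\bar B_{\inf}+\epsilon)\ge-K$, while the elementary facts that a running maximum of a Lipschitz function is Lipschitz and nondecreasing supply the needed regularity and sign of $\bar G'$. Everything else is bookkeeping; the point to be careful about in (3) is simply the legitimacy of invoking the continuous version of Fekete's lemma, for which local boundedness of $\phi^+$ and $\phi$ (immediate from $f(\cdot,0)\in L^\infty$) is enough.
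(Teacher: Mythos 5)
Your proof is correct. The difference from the paper is one of level of detail rather than of mathematical route: the paper simply disposes of all three items by citing Nadin--Rossi \cite{NaRo12} (Lemma 3.2, Remark 3.3, Proposition 3.1), whereas you have reconstructed a self-contained proof. For (1), your running-maximum corrector $A=\bar G-G$ is exactly the right device: the ``$\ge$'' half is soft once one integrates $A'+B\ge c$ and uses boundedness of $A$, and the ``$\le$'' half hinges on the uniform lower bound $\int_s^t(B-\bar B_{\inf}+\epsilon)\ge -K$ (which you correctly extract from the $\liminf$ definition together with $B\in L^\infty$), on the fact that the running maximum of an $L$-Lipschitz function is $L$-Lipschitz and nondecreasing (so $\bar G'\ge 0$ a.e.), and on the resulting inclusion $A\in[0,K]$. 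All of this checks. For (2), the piecewise-constant $\tilde f$ and $A(t)=\int_0^t(\tilde f-f(\cdot,0))$ indeed gives $A(kT)=0$ for all $k\in\N$, hence $|A|\le 2T\|f(\cdot,0)\|_\infty$, and $A'+f(\cdot,0)=\tilde f$ a.e.\ with $\essinf\tilde f=\inf_k a_k=\bar f_T$. For (3), the reparameterization $\{(s,t):t\ge s\ge 0,\, t-s\ge L\}\leftrightarrow\{(s,\tau):s\ge 0,\,\tau\ge L\}$ converts the paper's two-parameter $\liminf$ into $\liminf_{T\to\infty}\phi^+(T)/T$, and superadditivity of $\phi^+$ (which uses $s+T_1\ge 0$ so that the second window is admissible) plus the continuous Fekete lemma upgrades the $\liminf$ to a genuine limit; the measurability you invoke is in fact automatic since $\phi^+$, as an infimum of $\|f(\cdot,0)\|_\infty$-Lipschitz functions, is itself Lipschitz. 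The only thing you should say a word about, as a matter of exposition, is that the limits over $L$ in the definitions of $\bar f^+_{\inf}$ and $\bar f_{\inf}$ exist because the infima over tails are monotone in $L$; this is implicit in your argument but worth stating. No gaps otherwise.
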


\begin{proof}
(1) It follows  from \cite[Lemma 3.2]{NaRo12}.

(2) It follows from  \cite[Lemma 3.2, Remark 3.3]{NaRo12}.

(3) It follows from \cite[Proposition 3.1]{NaRo12}.
\end{proof}

\section{Entire positive solutions}

In this section, we study entire positive solutions of \eqref{main-eqn} and prove Theorem \ref{stable-thm}.

\begin{proof}[Proof of Theorem \ref{stable-thm}]
(1)
First, we consider
\begin{equation}\label{ode}
\dot u=uf(t,u),\quad\quad t\in\R.
\end{equation}
For any $u_0\in\R$, let $u(t;s,u_0)$ be the solution of \eqref{ode} with $u(s;s,u_0)=u_0$. We prove that \eqref{ode} has
an entire   solution $u^+(t)$ with $\inf_{t\in\R}u^+(t)>0$.

Consider the  linearization of \eqref{ode} at 0,
\begin{equation}\label{ode linearization}
\dot v=f(t,0)v,\quad\quad t\in\R.
\end{equation} Let $v(t;s,v_0)$ be the solution of \eqref{ode linearization} with $v(s;s,v_0)=v_0$. Then
$$v(t;s,v_0)=e^{\int^t_sf(\tau,0)d\tau}v_0.$$
By (H1) we can find $\epsilon_0>0$ and $T>0$ such that
$$
\frac{\int^{s+T}_sf(\tau,0)d\tau}{T}>\epsilon_0\quad \forall\,\, s\in\R.
$$
Note that for the above  $\epsilon_0>0$, there is $\delta_0>0$ such that
$$f(t,u)\geq f(t,0)-\epsilon_0\quad\mbox{ for all }\,t\in\R\,,|u|\leq \delta_0.$$
 Let $v_0>0$ be such that $e^{\int^t_sf(\tau,0)d\tau}v_0\leq\delta_0$ for all $s\in\R$ and $t\in[s,s+T]$. Then by the comparison principle
 for scalar ODEs,
$$u(t;s,v_0)\geq e^{\int^t_sf(\tau,0)d\tau-\epsilon_0(t-s)}v_0\quad\mbox{ for }\,s\in\R,\,t\in[s,s+T].
$$
In particular,
$$
u(s+T;s,v_0)\geq e^{\int^{s+T}_sf(\tau,0)d\tau-\epsilon_0T}v_0\ge v_0.
$$
By induction, we have
\begin{equation}
\label{positive-solu-eq1}
u(t;s,v_0)\geq e^{\int^t_{s+nT}f(\tau,0)d\tau-\epsilon_0(t-s-nT)}v_0\quad\mbox{ for }\,s\in\R,\,t\in[s+nT,s+(n+1)T],
\end{equation}
where $n=0,1,2,\cdots$.
By (H1), $f(t,u)<0$ for all $t\in\R$ and $u\ge M_0$. Then
\begin{equation}
\label{positive-solu-eq2}
u(t;s,M_0)<M_0\quad\mbox{ for }\,t>s.
\end{equation}

Let
$$u^n(t)=u(t;-nT,M_0),\quad t\ge -nT.$$
Then we get
$$u(t;-(n+1)T,v_0)<u^{n+1}(t)<u^n(t),\quad t\ge -nT.$$
Let $$u^+(t)=\lim_{n\to\infty}u^n(t).$$ We have that $u^+(t)$ is an entire  solution of
 \eqref{ode} and then that $\{u^+_i(t)=u^+(t)\}_{i\in\Z}$ is a spatially homogeneous solution of \eqref{main-eqn}.
 By \eqref{positive-solu-eq1},
 \begin{equation}
 \label{positive-solu-eq3}
 \inf \limits_{t\in\R}u^+(t)>0.
\end{equation}
   Hence $\{u^+_i(t)=u^+(t)\}_{i\in\Z}$ is a spatially homogeneous entire positive solution of \eqref{main-eqn}.
If no confusion occurs, we may still  write  $\{u^+_i(t)=u^+(t)\}_{i\in\Z}$ as $u^+(t)$.

Next, we claim  that for any $u^0\in l^{\infty,++}(\Z)$,
$$\|u(t+s;s,u^0)-u^+(t+s)\|_\infty \to 0\quad {\rm as}\quad t\to\infty$$
uniformly in $s\in\R$.
Assume that there is $u^0\in l^{\infty,++}(\Z)$ such that $\|u(t+s;s,u^0)-u^+(t+s)\|_\infty$ does not converge to $0$ as $t\to\infty$
uniformly in $s\in\R$. Then there are $\tilde \epsilon_{0}>0$, $s_n\in \R$, and $t_n\in\R$ with $t_n\to\infty$ as $n\to\infty$ such that
\begin{equation}
\label{positive-solu-eq4}
\|u(t_n+s_n;s_n,u^0)-u^+(t_n+s_n)\|_\infty\ge \tilde \epsilon_{0}\quad \forall\,\, n\ge 1.
\end{equation}
By Proposition \ref{part-metric}(1),
$$
\rho(u(t+s_n;s_n,u^0),u^+(t+s_n))<\rho(u^0,u^+(s_n))\quad \forall\,\, t>0.
$$
This together with \eqref{positive-solu-eq3} implies that there are $0<\epsilon<M$ such that
\begin{equation}
\label{positive-solu-eq5}
\epsilon\le u(t+s_n;s_n,u^0)\le M,\quad \epsilon\le u^+(t+s_n)\le M\quad \forall \,\, t\ge s_n,\,\, n=1,2,\cdots.
\end{equation}
By \eqref{positive-solu-eq4}, \eqref{positive-solu-eq5},  and Proposition \ref{part-metric}(2),
 there are $\tilde \sigma_0>0$, $\tilde\delta_0>0$, and $\tau>0$ such that
\begin{align*}
\tilde \sigma_0&\le\rho(u(t_n+s_n;s_n,u^0),u^+(t_n+s_n))\\
 &\le \rho(u(k\tau+s_n;s_n,u^0),u^+(k\tau+s_n))\\
 &\le \rho(u^0,u^+(s_n))-k\tilde\delta_0\quad \forall\,\, n\ge 1,\,\, 1\le k\le [t_n/\tau].
\end{align*}
This is a contradiction. Hence the claim holds.

By the claim in the above, \eqref{main-eqn} has only one spatially homogeneous entire positive solution. (1) is thus proved.

(2) We prove \eqref{convergence-aux-eq2}. \eqref{convergence-aux-eq1} can be proved similarly.

Let
$$
\delta_0=\liminf_{s\in\R,|i|\le\gamma^{'} t, t\to\infty} u_i(s+t;s,u^0).
$$
Then there is $T>0$ such that
$$
u_i(s+t;s,u^0)\ge \frac{\delta_0}{2}\quad \forall\,\, s\in\R,\,\, |i|\le \gamma^{'}t,\,\, t\ge T.
$$
Assume that there is $0<\gamma_0<\gamma^{'}$ such that \eqref{convergence-aux-eq2} does not hold. Then there are $\epsilon_0>0$,
$s_n\in\R$, $i_n\in\Z$, $t_n>0$ such that $|i_n|\le \gamma_0 t_n$, $t_n\to\infty$, and
\begin{equation}
\label{convergence-aux-eq3}
|u_{i_n}(s_n+t_n;s_n,u^0)-u^+(s_n+t_n)|\ge\epsilon_0.
\end{equation}

Let $\tilde  u^0=\{\tilde u^0_i\}$ and $\hat u^0=\{\hat u^0_i\}$, where $\tilde u^0_i=\frac{\delta_0}{2}$
 and $\hat u^0_i=\|u^0\|$ for all $i\in\Z$. By (1), there is $\tilde T\ge T$ such that
\begin{equation}
\label{convergence-aux-eq4}
|u_i(s+t;s,\tilde u^0)-u^+(s+t)|<\frac{\epsilon_0}{2}\quad \forall\,\, i\in\Z,\,\, s\in\R,\,\, t\ge \tilde T
\end{equation}
and
\begin{equation}
\label{convergence-aux-eq4-1}
u_i(s+t;s,u^0)\le u_i(s+t;s,\hat u^0)\le u^+(s+t)+\epsilon_0\quad \forall\,\, i\in\Z,\,\, s\in\R,\,\, t\ge \tilde T.
\end{equation}

Let $\tilde u^n=\{\tilde u^n_i\}$ be given by
$$
\tilde u^n_i=\begin{cases} \frac{\delta_0}{2}\quad \forall \,\, |i|\le (\gamma^{'}-\gamma_0)(t_n-\tilde T)\cr
0\quad \text{for otherwise}.
\end{cases}
$$
Then
$$
\lim_{n\to\infty} \tilde u^n_i= \tilde u^0_i\quad \text{locally uniformly}.
$$
By Proposition \ref{convergence-prop},
\begin{equation}
\label{convergence-aux-eq5}
\lim_{n\to\infty} \big(u_i(s_n+t_n;s_n+t_n-\tilde T,\tilde u^n)-u_i(s_n+t_n;s_n+t_n-\tilde T,\tilde u^0)\big)=0
\end{equation}
locally uniformly in $i\in\Z$.

Observe that
\begin{align*}
u_{i_n}(s_n+t_n;s_n,u^0)&=u_{i_n}(s_n+t_n;s_n+t_n-\tilde T,u(s_n+t_n-\tilde T;s_n,u^0))\\
&=u_0(s_n+t_n;s_n+t_n-\tilde T,u_{\cdot+i_n}(s_n+t_n-\tilde T;s_n,u^0))\\
&\ge u_0(s_n+t_n;s_n+t_n-\tilde T,\tilde u^n)\quad {\rm for}\,\, n\gg 1.
\end{align*}
This together with \eqref{convergence-aux-eq4}, \eqref{convergence-aux-eq4-1}, and \eqref{convergence-aux-eq5} implies that
\begin{equation}
\label{convergence-aux-eq6}
u^+(s_n+t_n)-\epsilon_0<u_{i_n}(s_n+t_n;s_n,u^0)< u^+(s_n+t_n)+\epsilon_0
\end{equation}
for $n\gg 1$, which contradicts to \eqref{convergence-aux-eq3}.
Hence \eqref{convergence-aux-eq2} holds.
\end{proof}

\section{Spreading speeds}

In this section, we investigate spreading speeds of \eqref{main-eqn} and prove Theorem \ref{main-thm1}.
First we present two lemmas.

For given $T>0$, recall that
$$
\bar f_T=\inf_{k\in \N}\frac{1}{T}\int_{(k-1)T}^{kT} f(\tau,0)d\tau.
$$
By Lemma \ref{technical-lemma}(3),
$$\bar{f}^+_{\inf}=\lim_{T\to\infty}\inf_{t\geq 0}\frac{1}{T}\int^{t+T}_tf(\tau,0)d\tau.$$
So we have

\begin{lemma}
\label{technical-lm2}
For given $\gamma^{'}<c_0^-$, there is $T>0$ such that
$$
\gamma^{'} <\inf_{\mu>0}\frac{e^{-\mu}+e^\mu-2+\bar f_T}{\mu}.
$$
\end{lemma}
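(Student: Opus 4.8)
The plan is to reduce the statement to Lemma \ref{technical-lemma}(3) together with an elementary continuity observation about the map $\bar f \mapsto \inf_{\mu>0}\frac{e^{-\mu}+e^\mu-2+\bar f}{\mu}$. First I would recall that, by definition, $c_0^- = \inf_{\mu>0}\frac{e^{-\mu}+e^\mu-2+\bar f_{\inf}^+}{\mu}$, so the hypothesis $\gamma' < c_0^-$ means $\gamma'$ is strictly below this infimum. Denote, for a real parameter $a$, the function $g(a) := \inf_{\mu>0}\frac{e^{-\mu}+e^\mu-2+a}{\mu}$. The key point is that $g$ is nondecreasing in $a$ and, more importantly, continuous (indeed it is concave as an infimum of affine functions of $a$, each $a\mapsto \frac{e^{-\mu}+e^\mu-2+a}{\mu}$ being affine with positive slope $1/\mu$; this makes $g$ concave, hence continuous on the interior of its domain, and clearly nondecreasing). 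Thus $g$ depends continuously and monotonically on its scalar argument.

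Next I would invoke Lemma \ref{technical-lemma}(3), which gives $\bar f_{\inf}^+ = \lim_{T\to\infty}\inf_{t\ge 0}\frac{1}{T}\int_t^{t+T} f(\tau,0)\,d\tau$. A fortiori, restricting the infimum to the subfamily of intervals $[(k-1)T,kT]$ with $k\in\N$ (rather than all shifts $[t,t+T]$ with $t\ge 0$) only increases the quantity, and in the limit one still has $\bar f_T := \inf_{k\in\N}\frac{1}{T}\int_{(k-1)T}^{kT} f(\tau,0)\,d\tau \ge \inf_{t\ge 0}\frac{1}{T}\int_t^{t+T} f(\tau,0)\,d\tau$; combined with the convergence statement, we get $\liminf_{T\to\infty} \bar f_T \ge \bar f_{\inf}^+$. (Conversely $\bar f_T$ is dominated by the averages over intervals $[t,t+T]$ as $t\to\infty$ along the sequence $t=(k-1)T$, but we only need the lower bound direction here.) Hence for all sufficiently large $T$ we have $\bar f_T > \bar f_{\inf}^+ - \eta$ for any prescribed $\eta>0$.

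Now I would choose $\eta>0$ small enough that $g(\bar f_{\inf}^+ - \eta) > \gamma'$; this is possible because $g$ is continuous at $\bar f_{\inf}^+$ and $g(\bar f_{\inf}^+) = c_0^- > \gamma'$. Fixing such $\eta$, pick $T>0$ large enough (using the previous paragraph) that $\bar f_T > \bar f_{\inf}^+ - \eta$. Then by monotonicity of $g$,
$$
\inf_{\mu>0}\frac{e^{-\mu}+e^\mu-2+\bar f_T}{\mu} = g(\bar f_T) \ge g(\bar f_{\inf}^+ - \eta) > \gamma',
$$
which is exactly the claimed inequality.

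The only mild subtlety — the step I would be most careful about — is making the passage from $\bar f_{\inf}^+$ (a $\liminf$ over arbitrary shifts) to $\bar f_T$ (an $\inf$ over the specific grid of intervals $[(k-1)T,kT]$) airtight, since these are genuinely different quantities for fixed $T$; the resolution is that we only need $\liminf_{T\to\infty}\bar f_T \ge \bar f_{\inf}^+$, and that follows because each grid average $\frac1T\int_{(k-1)T}^{kT} f(\tau,0)\,d\tau$ is one of the shifted averages $\frac1T\int_t^{t+T}f(\tau,0)\,d\tau$ with $t=(k-1)T\ge 0$, so $\bar f_T$ is bounded below by $\inf_{t\ge 0}\frac1T\int_t^{t+T}f(\tau,0)\,d\tau$, whose limit as $T\to\infty$ is $\bar f_{\inf}^+$ by Lemma \ref{technical-lemma}(3). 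Everything else is routine once the continuity/monotonicity of $g$ is recorded.
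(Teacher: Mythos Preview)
Your proposal is correct and follows the same route the paper intends: the paper simply records (just before stating the lemma) that $\bar f_{\inf}^+ = \lim_{T\to\infty}\inf_{t\ge 0}\frac{1}{T}\int_t^{t+T} f(\tau,0)\,d\tau$ by Lemma~\ref{technical-lemma}(3) and leaves the rest implicit. You have merely made explicit the two steps the paper suppresses, namely the inequality $\bar f_T \ge \inf_{t\ge 0}\frac{1}{T}\int_t^{t+T} f(\tau,0)\,d\tau$ and the continuity/monotonicity of $a\mapsto \inf_{\mu>0}\frac{e^{-\mu}+e^\mu-2+a}{\mu}$ at $a=\bar f_{\inf}^+>0$.
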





\begin{lemma}
\label{technical-lm4}
For any given $ M>0$,
consider
\begin{equation}
\label{aux-eq1}
\dot u_i(t)=u_{i+1}(t)-2u_i(t)+u_{i-1}(t)+u_i(t)  \big(\bar f_T - Mu_i(t)\big).
\end{equation}
Let $[c_{*,T},c^{*,T}]$ be the spreading speed interval of \eqref{aux-eq1}. Then
$$
c_{*,T}=c^{*,T}=\inf_{\mu>0}\frac{e^{-\mu}+e^\mu-2+\bar f_T}{\mu}.
$$
\end{lemma}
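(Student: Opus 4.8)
The plan is to treat \eqref{aux-eq1} as what it is: an \emph{autonomous} lattice KPP equation with $t$-independent nonlinearity $\tilde f(t,u)\equiv\bar f_T-Mu$, which satisfies (H0)--(H2) (in the range of $T$ where Lemma~\ref{technical-lm2} is used one has $\bar f_T>0$, so \eqref{assumption-eq} holds), and whose unique spatially homogeneous entire positive solution furnished by Theorem~\ref{stable-thm} is the constant $p:=\bar f_T/M$. Abbreviate $c_0:=\inf_{\mu>0}\frac{e^{-\mu}+e^{\mu}-2+\bar f_T}{\mu}$; since $c_{*,T}\le c^{*,T}$ always, it is enough to prove $c^{*,T}\le c_0$ and $c_{*,T}\ge c_0$. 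For the upper bound I would use the standard exponential super-solution: given $c>c_0$, pick $\mu>0$ with $\frac{e^{-\mu}+e^{\mu}-2+\bar f_T}{\mu}<c$; since $\tilde f(t,u)\le\bar f_T$, a one-line computation shows $\bar u_i(t)=Ae^{-\mu(i-ct)}$ is a super-solution of \eqref{aux-eq1}, and for $u^0\in l^\infty_0(\Z)$ one can take $A$ so large that $\bar u_i(0)\ge u_i^0$. Proposition~\ref{comparison} then gives $0\le u_i(t;0,u^0)\le Ae^{-\mu(i-ct)}$, so $\sup_{i\ge c't}u_i(t;0,u^0)\to0$ for every $c'>c$; by the symmetry $i\mapsto-i$ of \eqref{aux-eq1} the same holds for $i\le-c't$, whence $c^{*,T}\le c_0$.

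The lower bound $c_{*,T}\ge c_0$ is the substantive part, and I would build it from two ingredients. The first is \emph{local non-degeneracy}: for every $u^0\in l^\infty_0(\Z)$ and every $R>0$, $\liminf_{t\to\infty}\inf_{|i|\le R}u_i(t;0,u^0)>0$. This I would obtain by comparing, on the finite lattice $\{|i|\le N\}$ with zero exterior values, with the solution of the associated logistic ODE system started from a small constant at some time $t_0>0$ (legitimate by Proposition~\ref{comparison} and the strict positivity of $u_i(t;0,u^0)$ for $t>0$). For $N$ large the zero state of that finite system is unstable, because the largest eigenvalue of the discrete Dirichlet Laplacian on $\{|i|\le N\}$ tends to $0$ and is therefore dominated by $\bar f_T>0$; hence the finite system converges to its unique positive equilibrium $p^N$, and $\min_{|i|\le R}p^N_i>0$ as soon as $N>R$ is large. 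Comparison then transfers this lower bound to $u(t;0,u^0)$.

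The second ingredient is a \emph{compactly supported moving sub-solution}. Fix $\gamma\in(0,c_0)$. Since $\mu\mapsto\frac{e^{-\mu}+e^{\mu}-2+\bar f_T}{\mu}$ stays above $\gamma$ on $(0,\infty)$ and blows up as $\mu\to0^+$, one can choose $\delta>0$ small so that still $\gamma<\inf_{\mu>0}\frac{e^{-\mu}+e^{\mu}-2+(\bar f_T-M\delta)}{\mu}$; then the entire function $g(\mu)=e^{-\mu}+e^{\mu}-2+(\bar f_T-M\delta)-\gamma\mu$ has no real zero, hence a non-real zero $\mu=a+ib$ with $b>0$ (the usual complex characteristic exponent for speeds below critical). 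In the moving frame $\xi=i-\gamma t$ the function $e^{-a\xi}\cos(b\xi)$ solves the linearization $\dot w_i=Hw_i+(\bar f_T-M\delta)w_i$; truncating it to the bounded interval $\{|b\xi|<\pi/2\}$ and rescaling by a small $\kappa>0$ so that its amplitude is $\le\delta$ produces, for every shift $z\in\R$, a sub-solution of \eqref{aux-eq1},
\[
\underline u_i(t)=\kappa\,\max\bigl\{0,\ e^{-a(i-\gamma t-z)}\cos\bigl(b(i-\gamma t-z)\bigr)\bigr\},
\]
because on its support it satisfies the linear equation and $(\bar f_T-M\delta)\underline u_i\le\underline u_i(\bar f_T-M\underline u_i)$, while at the edges of the support the downward kink only helps, $e^{-a\xi}\cos(b\xi)\le0$ there. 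Now, given $\gamma'\in(0,\gamma)$ and any sequence $(i_n,t_n)$ with $0\le i_n\le\gamma' t_n$ and $t_n\to\infty$, put $s_n=t_n-i_n/\gamma$ and $z_n=i_n-\gamma t_n$, so that $s_n\ge(1-\gamma'/\gamma)t_n\to\infty$ and the support of $\underline u_{\cdot}(s_n)$ is a bounded set of integers centered at $0$, independent of $n$; by the first ingredient (after possibly shrinking $\kappa$, depending on $u^0$) $u_i(s_n;0,u^0)\ge\underline u_i(s_n)$ for all large $n$, and Proposition~\ref{comparison} on $[s_n,t_n]$ gives $u_{i_n}(t_n;0,u^0)\ge\underline u_{i_n}(t_n)=\kappa$. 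Together with the symmetric argument for $i_n\le0$ this yields $\liminf_{|i|\le\gamma' t,\,t\to\infty}u_i(t;0,u^0)\ge\kappa>0$, and then Theorem~\ref{stable-thm}(2) (with $u^+\equiv p$) gives $\limsup_{|i|\le\gamma'' t,\,t\to\infty}|u_i(t;0,u^0)-p|=0$ for every $\gamma''<\gamma'$ and every $u^0\in l^\infty_0(\Z)$. Hence $c_{*,T}\ge\gamma''$ for all $\gamma''<c_0$, so $c_{*,T}\ge c_0$, and with the upper bound we conclude $c_{*,T}=c^{*,T}=c_0$.

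The main obstacle is precisely this lower bound: one must show that the leading edge of a compactly supported datum advances with speed at least $c_0$, which is exactly where the complex characteristic exponent and the ``hair-trigger'' local instability of $0$ (the first ingredient, used to launch the moving sub-solution) are needed; the upper bound and the bookkeeping are routine. Alternatively, one can shortcut the lower bound by noting that \eqref{aux-eq1}, being autonomous, is a special case of the spreading-speed results for lattice KPP equations in \cite{We82,LiZh1,LiZh2}, which give $c_{*,T}=c^{*,T}=c_0$ directly.
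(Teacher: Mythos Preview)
The paper does not prove this lemma at all: its entire proof is the one-line citation ``See \cite[Theorem 2.3]{Sh09}.'' Your closing remark that one may invoke the autonomous lattice KPP literature (\cite{We82,LiZh1,LiZh2}, or indeed \cite{Sh09}) is therefore exactly what the paper does, and for the purposes of this paper that suffices.

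Your direct argument is essentially the standard proof that such a citation would unpack, and it is sound in outline. Two technical points deserve a bit more care if you want it to stand on its own. First, the inference ``$g$ has no real zero, hence a non-real zero'' needs justification: it is not automatic for entire functions, but here $g(\mu)=2\cosh\mu-2+(\bar f_T-M\delta)-\gamma\mu$ has order $1$ and is clearly not of the form $e^{a\mu+b}$, so Hadamard's theorem forces (infinitely many) zeros, necessarily in conjugate pairs. Second, for the truncated profile $\kappa\max\{0,e^{-a\xi}\cos(b\xi)\}$ to be a sub-solution of the \emph{lattice} equation you need that when $i$ lies in the support but $i\pm1$ does not, the untruncated value $e^{-a(\xi\pm1)}\cos(b(\xi\pm1))$ is $\le0$; this requires $b<\pi$, which holds for the pair of complex roots closest to the real axis (they bifurcate from the real double root as $\gamma$ drops below the critical speed for $\bar f_T-M\delta$, so $b$ can be made small by taking $\delta$ small and $\gamma$ close to $c_0$, and then a continuation argument covers all $\gamma\in(0,c_0)$). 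With these details filled in, your argument is a complete self-contained proof, whereas the paper simply defers to the reference.
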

\begin{proof}
See \cite[Theorem 2.3]{Sh09}.
\end{proof}

We now prove Theorem \ref{main-thm1}.

\begin{proof}[Proof of Theorem \ref{main-thm1}]

(1) First we prove that for any given $\gamma^{'}<c_0^-$ and $u^0\in l^\infty_0(\Z)$,
\begin{equation}
\label{lower-bound-eq1}
\liminf_{|i|\le \gamma^{'} t, t\to\infty} u_i(t;0,u^0)>0.
\end{equation}

For the given $\gamma^{'}<c_0^-$, let $T>0$ be as in Lemma \ref{technical-lm2} and $A(t)$ be as in Lemma \ref{technical-lemma}(2).
Put $v_i(t)=u_i(t;0,u^0)e^{A(t)}$. Then $v_i(t)$ is  absolutely continuous in and differentiable in $t\in [0,\infty)$  and  satisfies
\begin{align}
\label{aux-eq2}
\dot v_i(t)&=\dot u_i(t;0,u^0) e^{A(t)}+A^{'}(t) u_i(t;0,u^0)e^{A(t)}\nonumber\\
&=v_{i+1}(t)+v_{i-1}(t)-2v_i(t)+v_i(t)\big( f(t,u_i(t;0,u^0))+A^{'}(t)\big)\nonumber\\
&\ge v_{i+1}(t)+v_{i-1}(t)-2v_i(t)+v_i(t)\big( f(t,0)-\tilde M_0 u_i(t;0,u^0)+A^{'}(t)\big)\nonumber\\
&\ge v_{i+1}(t)+v_{i-1}(t)-2v_i(t)+v_i(t)\big(\bar f_T-\tilde M_0u_i(t;0,u^0)\big)\nonumber\\
&= v_{i+1}(t)+v_{i-1}(t)-2v_i(t)+v_i(t)\big(\bar f_T-\tilde M_0 e^{-A(t)}v_i(t)\big)\nonumber\\
&\ge  v_{i+1}(t)+v_{i-1}(t)-2v_i(t)+v_i(t)\big(\bar f_T-\tilde M v_i(t)\big)
\end{align}
for a.e. $t>0$, where $\tilde M=\tilde M_0 \sup\limits_{t>0} e^{-A(t)}$. By Lemmas \ref{technical-lm2} and \ref{technical-lm4},
$$
\liminf_{|i|\le \gamma^{'} t,t\to\infty} v_i(t)>0.
$$
This implies that \eqref{lower-bound-eq1} holds.

For any $\gamma<c_0^-$, let $\gamma^{'}\in (\gamma,c_0^-)$. By \eqref{lower-bound-eq1}
and  Theorem \ref{stable-thm}(2),
$$
\limsup_{|i|\le \gamma t, t\to\infty}|u_i(t;0,u^0)-u^+(t)|=0.
$$
 Thus $c_0^-\leq c_*$.



Next we prove that for any $\gamma>c_0^+$ and $u^0\in l^\infty_0(\Z)$,
\begin{equation}
\label{upper-bound-eq1}
\limsup_{|i|\ge \gamma t,t\to\infty}u_i(t;0,u^0)=0.
\end{equation}

For the given $\gamma>c_0^+$, there is $\tilde T>0$ such that
\begin{equation}\label{greater}
\gamma>\inf_{\mu>0}\frac{e^{-\mu}+e^\mu-2+\tilde f_{\tilde T}}{\mu}\end{equation}
with $$\tilde f_{\tilde T}=\sup_{k\in \N}\frac{1}{\tilde T}\int_{(k-1)\tilde T}^{k\tilde T} f(\tau,0)d\tau.$$
Then by Lemma \ref{technical-lemma}(2) with $f(t,0)$ and $T$ replaced by $-f(t,0)$ and $\tilde T>0$, respectively, there is $\tilde A(t)\in W^{1,\infty}((0,\infty))$ such that
\begin{equation}
-\tilde f_{\tilde T}=\inf_{k\in \N}\frac{1}{\tilde T}\int_{(k-1)\tilde T}^{k\tilde T} \big(-f(\tau,0)\big)d\tau=
 \essinf_{t\in (0,\infty)}\big(-\tilde A^{'}(t)-f(t,0)\big).
\end{equation}
Put $\tilde v_i(t)=u_i(t;0,u^0)e^{\tilde A(t)}$. By (H2),
$f(t,u)\le f(t,0)-\tilde m_0 u$.
 Then $\tilde v_i(t)$ is absolutely continuous in $t\in [0,\infty)$  and satisfies
\begin{align}
\label{aux-eq2}
\dot {\tilde v}_i(t)&=\dot u_i(t;0,u^0) e^{\tilde A(t)}+\tilde A^{'}(t) u_i(t;0,u^0)e^{\tilde A(t)}\nonumber\\
&=\tilde v_{i+1}(t)+\tilde v_{i-1}(t)-2\tilde v_i(t)+\tilde v_i(t)\big( f(t,u_i(t;0,u^0))+\tilde A^{'}(t)\big)\nonumber\\
&\le \tilde v_{i+1}(t)+\tilde v_{i-1}(t)-2\tilde v_i(t)+\tilde v_i(t)\big( f(t,0)-\tilde m_0u_i(t;0,u^0)+\tilde A^{'}(t)\big)\nonumber\\
&\le \tilde v_{i+1}(t)+\tilde v_{i-1}(t)-2\tilde v_i(t)+\tilde v_i(t)\big(\tilde f_{\tilde T}-\tilde m_0u_i(t;0,u^0)\big)\nonumber\\
&= \tilde v_{i+1}(t)+\tilde v_{i-1}(t)-2\tilde v_i(t)+\tilde v_i(t)\big(\tilde f_{\tilde T}-\tilde m_0 e^{-\tilde A(t)}\tilde v_i(t)\big)\nonumber\\
&\le  \tilde v_{i+1}(t)+\tilde v_{i-1}(t)-2\tilde v_i(t)+\tilde v_i(t)\big(\tilde f_{\tilde T}-\tilde m \tilde v_i(t)\big)
\end{align}
for a.e. $t>0$,
where $\tilde m=\tilde m_0 \inf\limits_{t>0} e^{-\tilde A(t)}$. By Lemma \ref{technical-lm4} and  \eqref{greater},
$$
\limsup_{|i|\ge \gamma t,t\to\infty} \tilde v_i(t)=0.
$$
This implies that \eqref{upper-bound-eq1} holds. Thus $c_0^+\ge c^*$.

(2)  Note that from the proof of \cite[Lemma 3.2]{NaRo12}, we can also get that for given $T>0$,
there is $\hat A\in W^{1,\infty}(\R)$ such that
$$
\essinf_{t\in \R}\big(\hat A^{'}(t)+f(t,0)\big)=\inf_{k\in \Z}\frac{1}{T}\int_{(k-1)T}^{kT} f(\tau,0)d\tau.
$$ Then the results can be proved by the similar arguments as in (1).
\end{proof}

\section{Transition fronts}

In this section, we study transition fronts and prove Theorems \ref{main-thm2} and \ref{main-thm3}.
 We first prove some important lemmas.



For given $\mu>0$, let
$$c(t;\mu)=\frac{e^{-\mu}+e^{\mu}-2+f(t,0)}{\mu}.
$$
Recall that
$$
 \tilde c_0^-=\inf_{\mu>0}\frac{e^{-\mu}+e^{\mu}-2+\bar f_{\inf}}{\mu}.
$$

\begin{lemma}
\label{mu-star-lemma}
 There is a unique $\mu^*>0$ such that
$$
\tilde c_0^-=\frac{e^{-\mu^*}+e^{\mu^*}-2+\bar f_{\inf}}{\mu^*}
$$
and
for any $\gamma>\tilde c_0^-$,
 the equation $\gamma=\frac{e^{-\mu}+e^{\mu}-2+\bar f_{\inf}}{\mu}$ has exactly two positive solutions for $\mu$.
 \end{lemma}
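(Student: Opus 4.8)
The plan is to reduce the statement to an elementary one-variable calculus fact about the function
\[
g(\mu)=\frac{e^{-\mu}+e^{\mu}-2+\bar f_{\inf}}{\mu},\qquad \mu>0.
\]
First I would record that $\bar f_{\inf}>0$: this is precisely \eqref{assumption-eq} in (H1), since $\bar f_{\inf}=\liminf_{t\ge s,\,t-s\to\infty}\frac{1}{t-s}\int_s^t f(\tau,0)\,d\tau$. Writing $h(\mu):=2\cosh\mu-2+\bar f_{\inf}$, we have $g=h/\mu$, with $h$ smooth and strictly convex, $h(0)=\bar f_{\inf}>0$, and $h'(\mu)=2\sinh\mu>0$ for $\mu>0$. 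From the asymptotics $g(\mu)\to+\infty$ as $\mu\to0^{+}$ (numerator $\to\bar f_{\inf}>0$, denominator $\to0^{+}$) and $g(\mu)\to+\infty$ as $\mu\to\infty$ (numerator grows like $e^{\mu}$), together with continuity of $g$ on $(0,\infty)$, the infimum $\tilde c_0^-=\inf_{\mu>0}g(\mu)$ is attained at an interior point.

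Next I would pin down the critical points of $g$. Writing $g'(\mu)=\phi(\mu)/\mu^{2}$ with $\phi(\mu):=\mu h'(\mu)-h(\mu)=2\mu\sinh\mu-2\cosh\mu+2-\bar f_{\inf}$, a direct computation gives $\phi(0)=-h(0)=-\bar f_{\inf}<0$ and $\phi'(\mu)=\mu h''(\mu)=2\mu\cosh\mu>0$ for all $\mu>0$, so $\phi$ is strictly increasing on $(0,\infty)$; since moreover $\phi(\mu)\to+\infty$ as $\mu\to\infty$, there is a unique $\mu^{*}>0$ with $\phi(\mu^{*})=0$, and $\phi<0$ on $(0,\mu^{*})$, $\phi>0$ on $(\mu^{*},\infty)$. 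Hence $g$ is strictly decreasing on $(0,\mu^{*})$ and strictly increasing on $(\mu^{*},\infty)$, so $\mu^{*}$ is the unique minimizer and $\tilde c_0^-=g(\mu^{*})$, which is the first assertion.

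Finally, for $\gamma>\tilde c_0^-=g(\mu^{*})$ I would count intersections of the graph of $g$ with the horizontal line $y=\gamma$. On $(0,\mu^{*}]$ the function $g$ decreases continuously from $+\infty$ down to $g(\mu^{*})<\gamma$, so by the intermediate value theorem and strict monotonicity it takes the value $\gamma$ exactly once; on $[\mu^{*},\infty)$ it increases continuously from $g(\mu^{*})<\gamma$ to $+\infty$, so it takes the value $\gamma$ exactly once there as well; strict monotonicity on each piece excludes any further solution. Thus $\gamma=g(\mu)$ has exactly two positive roots. There is no serious obstacle here; the only points needing care are (i) invoking \eqref{assumption-eq} to guarantee $\bar f_{\inf}>0$, which is what forces $g(\mu)\to+\infty$ as $\mu\to0^{+}$ and hence the existence of an interior minimizer, and (ii) the sign analysis of $\phi$ that produces a \emph{single} critical point and therefore the exact count of two.
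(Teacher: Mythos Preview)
Your proposal is correct and follows essentially the same route as the paper: both analyze the numerator $\mu^2 g'(\mu)$ (the paper writes this as $\mu^2\chi_1'(\mu)$, you call it $\phi$), observe that its derivative equals $\mu h''(\mu)=\mu(e^{\mu}+e^{-\mu})>0$, and combine the resulting uniqueness of the critical point with the limits $g(\mu)\to+\infty$ at $\mu\to0^+$ (via \eqref{assumption-eq}) and $\mu\to\infty$. Your write-up is slightly more explicit in checking $\phi(0)<0$ and in spelling out the intermediate-value/strict-monotonicity count of the two roots, but the underlying argument is the same.
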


 \begin{proof}
Let $\chi_1(\mu)=\frac{e^{-\mu}+e^{\mu}-2+\bar f_{\inf}}{\mu}$ and $\chi_2(\mu)=\frac{\pa}{\pa \mu}(\mu\chi_1(\mu))$. Then
$$\frac{\pa \chi_2}{\pa \mu}(\mu)=e^\mu-e^{-\mu}>0,$$
$$\frac{\pa \chi_1}{\pa \mu}(\mu)=\frac{1}{\mu}[\chi_2(\mu)-\chi_1(\mu)],$$
$$\frac{\pa}{\pa\mu}(\mu^2\frac{\pa \chi_1}{\pa \mu}(\mu))=\mu\frac{\pa\chi_2}{\pa\mu}(\mu)>0 \mbox{ for }\mu>0.$$
Hence there is at most one $\mu>0$ such that
$$
\frac{\pa \chi_1}{\pa \mu}(\mu)=0.
$$
The lemma then follows from
$$\lim_{\mu\rightarrow +\infty}\chi_1(\mu)=+\infty,$$
and  $$\lim_{\mu\rightarrow 0^+}\chi_1(\mu)=+\infty \,\,(\mbox{by \eqref{assumption-eq}}).$$
\end{proof}

\begin{lemma}\label{subsuper-solu-lemma}
For any $\gamma>\tilde c_0^-$, let $0<\mu<\mu^*$ be such that $\chi_1(\mu)=\gamma$ and $c(t)=c(t;\mu)$. Then there are
 $\bar{\phi}(x,t)$ and $\underline{\phi}(x,t)$ satisfying the following properties.
 \begin{itemize}
\item[(1)]  $\bar{\phi}(x,t)$ and $\underline{\phi}(x,t)$ are continuous functions in $t\in\R$ and $x\in\R$,
$$
0<\underline{\phi}(x,t)<\bar{\phi}(x,t)\le u^+(t),\,\,\,\bar{\phi} \mbox{ is nonincreasing in } x\in \R,
$$
\item[(2)] For any $M\in\R$,
$$
\inf_{x\le M,t\in\R} \underline{\phi}(x,t)>0, \,\, \inf_{x\le M,t\in\R} \bar{\phi}(x,t)>0.
$$

\item[(3)] The limits
$$\lim_{x\to\infty}\frac{\bar \phi(x,t)}{e^{-\mu x}}=\lim_{x\to\infty}\frac{\underline \phi(x,t)}{e^{-\mu x}}=1$$
exist and are
uniform in $t\in\R$.

\item[(4)] Let
$$\bar{v}(\cdot,t)=\bar{\phi}(\cdot-\int^t_0c(\tau)d\tau,t)\quad {\rm and}\quad \underline{v}(\cdot,t)=\underline{\phi}(\cdot-\int^t_0c(\tau)d\tau,t).
$$
Then
$$
u(x,t;s,\bar v(\cdot,s))\le \bar v(x,t),\quad u(x,t;s,\underline v(\cdot,s))\ge \underline v(x,t)
$$
for all $x\in\R$ and $t\ge s$.
\end{itemize}
\end{lemma}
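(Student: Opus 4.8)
The plan is to build $\bar\phi$ and $\underline\phi$ out of exponential profiles in the moving coordinate $\xi=x-\int_0^tc(\tau)\,d\tau$, exploiting that $c(t)=c(t;\mu)$ is tuned exactly so that $e^{-\mu\xi}$ solves \eqref{main-eqn2} linearized at $0$: since $\partial_t e^{-\mu\xi}=\mu c(t;\mu)e^{-\mu\xi}=(e^{-\mu}+e^{\mu}-2+f(t,0))e^{-\mu\xi}$ and $H(e^{-\mu\xi})=(e^{-\mu}+e^{\mu}-2)e^{-\mu\xi}$, one has $\partial_t e^{-\mu\xi}=H(e^{-\mu\xi})+e^{-\mu\xi}f(t,0)$. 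For the super-solution I would take $\bar\phi(\xi,t)=\min\{u^+(t),e^{-\mu\xi}\}$, that is $\bar v(x,t)=\min\{u^+(t),e^{-\mu(x-\int_0^tc(\tau)d\tau)}\}$. As $f(t,\cdot)$ is decreasing on $[0,\infty)$, $e^{-\mu\xi}f(t,0)\ge e^{-\mu\xi}f(t,e^{-\mu\xi})$, so $e^{-\mu\xi}$ is a super-solution of \eqref{main-eqn2}; and $u^+(t)$ is an entire, spatially homogeneous solution, hence a super-solution. For \eqref{main-eqn2} the pointwise minimum of two super-solutions is again a super-solution (at a point realizing the minimum $Hv$ can only decrease while $vf(t,v)$ is unchanged), so $\bar v$ is a super-solution and (4) for $\bar v$ follows from Proposition \ref{comparison}(1). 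Properties (1)--(3) for $\bar\phi$ are then immediate: $\bar\phi(\xi,t)=e^{-\mu\xi}$ once $e^{-\mu\xi}<\inf_t u^+(t)$, giving (3) uniformly in $t$, while $\bar\phi\ge\min\{\inf_t u^+(t),e^{-\mu M}\}$ on $\{\xi\le M\}$ gives (2).

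The sub-solution is the harder part; I would set $\underline\phi=\max\{0,\psi,\chi\}$, where $\psi$ carries the leading-edge (right) behaviour and $\chi$ keeps $\underline\phi$ away from $0$ on left half-lines. For $\psi$ choose a second rate $\nu\in(\mu,\min\{2\mu,\mu^*\})$ (nonempty because $0<\mu<\mu^*$) and put $\psi(\xi,t)=e^{-\mu\xi}-q(t)e^{-\nu\xi}$ with $q$ bounded away from $0$ and $\infty$. Using \textbf{(H2)} in the form $f(t,u)\ge f(t,0)-\tilde M_0u$, a direct computation gives, where $\psi>0$,
$$\partial_t\psi-H\psi-\psi f(t,\psi)\ \le\ \big[-q'(t)+\nu q(t)\big(c(t;\nu)-c(t;\mu)\big)\big]e^{-\nu\xi}+\tilde M_0\psi^2,$$
and since $0<\psi\le e^{-\mu\xi}$ there, the right-hand side is $\le 0$ as soon as $-q'(t)+\nu q(t)(c(t;\nu)-c(t;\mu))\le-\delta_0$ with $\delta_0$ large enough to dominate $\tilde M_0e^{-(2\mu-\nu)\xi}$ on the half-line where $\psi>0$ (this is where $\nu<2\mu$ is used). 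To produce such a $q$, write $q(t)=q_{\min}e^{r(t)}$ and reduce the task to finding $r\in W^{1,\infty}(\R)$, $r\ge0$, with $\essinf_t\big(r'(t)-\nu(c(t;\nu)-c(t;\mu))\big)>0$. By Lemma \ref{technical-lemma}(1) applied to $B(t)=-\nu(c(t;\nu)-c(t;\mu))$, such $r$ exists provided $\bar B_{\inf}>0$, and averaging identifies $\bar B_{\inf}=\nu\big(\chi_1(\mu)-\chi_1(\nu)\big)$, which is strictly positive precisely because $\mu<\nu<\mu^*$ and $\chi_1$ is strictly decreasing on $(0,\mu^*)$ by Lemma \ref{mu-star-lemma}. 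Choosing $q_{\min}$ large then makes $\delta_0$ large.

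For $\chi$ I would take a ``back'' profile $\chi(\xi,t)=u^+(t)-a(t)e^{\beta\xi}$, truncated below at $0$ (and below a fixed fraction of $u^+$ so as to remain a sub-solution on its whole positivity range), with $\beta>0$ small and $a\in W^{1,\infty}(\R)$ bounded away from $0$ and $\infty$ — again produced by Lemma \ref{technical-lemma}(1), now applied to \eqref{main-eqn2} linearized at $u^+$, whose relevant average rate is negative (as in the space-continuous references; $\beta$ small is what makes the averaging constant negative). Since $\psi\to0$ and $\chi$ turns negative as $\xi\to+\infty$, (3) for $\underline\phi$ is inherited from $\psi$ (uniformly in $t$ because $\sup_t q(t)<\infty$); $\chi$ keeps $\underline\phi$ above a positive constant on $\{\xi\le\Xi_0\}$ while $\psi\ge\tfrac12e^{-\mu\xi}$ for $\xi\ge\Xi_0$, so choosing parameters so these ranges overlap yields (2); and $0<\underline\phi<\bar\phi$ in (1) follows by taking $q_{\min}$ large enough that $\sup_{\xi,t}\underline\phi<\inf_t u^+(t)$. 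Finally, the pointwise maximum of sub-solutions of \eqref{main-eqn2} is again a sub-solution (the mirror of the fact used for $\bar v$), so $\underline v$ is a sub-solution and (4) for $\underline v$ is Proposition \ref{comparison}(1) once more.

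The main obstacle is the sub-solution, and within it the construction of the time-dependent coefficient $q(t)$: the time heterogeneity forces $q$ to depend genuinely on $t$, and the crucial inequality $\bar B_{\inf}=\nu(\chi_1(\mu)-\chi_1(\nu))>0$ is exactly where the hypothesis $0<\mu<\mu^*$ is decisive, combining the monotonicity information of Lemma \ref{mu-star-lemma} with the averaging principle of Lemma \ref{technical-lemma}(1). The secondary difficulty is the bookkeeping needed to attach the back profile $\chi$ so as to secure the uniform lower bound (2) without destroying the leading-edge asymptotics (3).
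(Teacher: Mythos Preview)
Your approach is essentially the paper's: the super-solution $\bar\phi(x,t)=\min\{u^+(t),e^{-\mu x}\}$ is identical, and your leading-edge sub-solution $\psi(\xi,t)=e^{-\mu\xi}-q(t)e^{-\nu\xi}$ with $q(t)=q_{\min}e^{r(t)}$ is exactly the paper's $\psi(x,t)=e^{-\mu x}-e^{A(t)-\tilde\mu x}$ (your $\nu$ is their $\tilde\mu$, your $r$ is their $A$ up to an additive constant). The key step --- producing the bounded time-dependent exponent via Lemma~\ref{technical-lemma}(1) and identifying the relevant average as $\nu(\chi_1(\mu)-\chi_1(\nu))>0$ thanks to the monotonicity of $\chi_1$ on $(0,\mu^*)$ --- is precisely what the paper does, and you have located the crux correctly.

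The one genuine difference is the back profile. You propose $\chi(\xi,t)=u^+(t)-a(t)e^{\beta\xi}$, which would require a second invocation of Lemma~\ref{technical-lemma}(1) (linearizing now at $u^+$) together with some truncation bookkeeping. The paper takes a much shorter route: it uses the spatially constant function $u_K^+(t)$, the entire positive solution of $\dot u=u(f(t,0)-Ku)$, which by (H2) is automatically a sub-solution of \eqref{main-eqn2} and can be made as small as one likes by taking $K$ large. One then glues $u_K^+$ to $\psi$ at the leftmost crossing point $X_1(t)$; the only extra condition needed is $X_2(t)-X_1(t)>1$ (so that the nonlocal operator $H$ does not mix the two pieces badly near the seam), and this is arranged simply by choosing $K$ large. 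Your $\chi$ would also work, but the paper's shortcut eliminates the second $W^{1,\infty}$ coefficient and the linearization at $u^+$ altogether, so the ``secondary difficulty'' you flag never arises.
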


\begin{proof}
First of all, we may assume that $f(t,u)=f(t,0)$ for $u<0$. For otherwise, we may replace $f(t,u)$ be $\tilde f(t,u)$, where
$\tilde f(t,u)=f(t,u)$ for $u\ge 0$ and $\tilde f(t,u)=f(t,0)$ for $u<0$.

We first construct $\bar \phi(x,t)$ satisfying $\bar\phi(x,t)\le u^+(t)$ and (2)-(4).
Let  $\varphi(x)=e^{-\mu x}$. Then $\varphi (x)$ is a solution of the equation
\begin{align}
0&=\pa_t\varphi-H\varphi-c(t)\pa_x\varphi-f(t,0)\varphi\nonumber
\\&=\varphi[c(t)\mu-(e^{\mu}+e^{-\mu}-2+f(t,0))], \,\mbox{ for }     x\in \R, \,t\in\R.\nonumber
\end{align}
Let $\hat v(x,t)=\varphi(x-\int^t_0c(\tau)d\tau)=e^{-\mu (x-\int^t_0c(\tau)d\tau)}$. Then $\hat v(x,t)$ satisfies
$$\pa_t\hat v(x,t)=H\hat v(x,t)+f(t,0)\hat v(x,t)\ge H\hat v+\hat vf(t,\hat v), \quad\quad x\in \R,\, t\in\R.$$ Thus it is a super-solution of \eqref{main-eqn2}. Moreover, for any constant $C$,
$\hat u(x,t):=e^{Ct} \hat v(x,t)$  satisfies
$$
\pa_t \hat u(x,t)=(\pa_t\hat  v(x,t)+ C\hat v(x,t))e^{Ct}\ge H \hat u(x,t)+C \hat u(x,t)+\hat u(x,t) f(t,\hat v(x,t)),
$$
hence
$$
\hat u(x,t)\ge \hat u(x,s)+\int_s ^t \Big(H \hat u(x,\tau)+C \hat u(x,\tau)+
\hat u(x,\tau) f(\tau,\hat v(x,\tau))\Big) d\tau,
$$
and $\tilde u(x,t)=e^{Ct} u^+(t)$ satisfies
$$
\pa_t \tilde u(x,t)=(\pa_t  u^+(t) + C u^+(t))e^{Ct}= H \tilde u(x,t)+C \tilde u(x,t)+\tilde u(x,t) f(t,u^+(t)),
$$
hence
$$
\tilde u(x,t)= \tilde u(x,s)+\int_s ^t \Big(H \tilde u(x,\tau)+C \tilde u(x,\tau)+\tilde u(x,\tau) f(\tau, u^+(\tau))\Big) d\tau.
$$

Let
$$
\bar \phi(x,t)= \min\{\varphi(x),u^+(t)\}
 $$
 and
 $$
 \bar v(x,t)=\bar \phi(x-\int_0^t c(\tau)d\tau,t).
 $$
 It is clear that
 $$
 \bar \phi(x,t)\le u^+(t)\quad \forall\,\, x\in\R,\,\, t\in\R
 $$
 and that $\bar \phi(x,t)$ satisfies (2) and (3). We prove that $\bar\phi(x,t)$ also satisfies (4).

 Recall that $\bar v(x,t)=\bar\phi(x-\int_0^t c(\tau)d\tau,t)$. Note that  for any constant $C$, $u(x,t)=e^{Ct} \bar v(x,t)$ satisfies
$$
u(x,t)\ge u(x,s)+\int_s ^t \Big(H u(x,\tau)+C u(x,\tau)+u(x,\tau) f(\tau, \bar v(x,\tau))\Big) d\tau.
$$
Let
$w(x,t)=e^{Ct} \big(\bar v(x,t)-u(x,t;s,\bar v(\cdot;s))\big)$. Then
$$
w(x,t)\ge  w(x,s)+\int_s^t \Big(H w(x,\tau)+C w(x,\tau) +a(x,\tau) w(x,\tau)\Big)d\tau,
$$
where
$$a(x,\tau)=f(\tau,u(x,\tau;s,\bar v(\cdot,s)))+\bar v(x,\tau)\int^1_0f_u(\tau,r\bar v(x,\tau)+(1-r)u(x,\tau;s,\bar v(\cdot,s)))dr.
$$
Choose $C>0$ such that $C-2+a(x,t)>0$ for all $t\in\R$ and $x\in\R$.
By the arguments of Proposition \ref{comparison},
we have
$$
w(x,t)\ge w(x,s)=0,
$$
and hence
$$
u(x,t;s,\bar v(\cdot,s))\le \bar v(x,t)\quad \forall\,\, x\in\R,\,\, t\ge s.
$$
Hence $\bar\phi(x,t)$ also satisfies (4).

Next, we construct $\underline\phi(x,t)$ satisfying (1)-(4).
Let $\tilde M_0$ be as in (H2).
Let  $B(t)=-(e^{-\tilde{\mu}}+e^{\tilde{\mu}}-2)+c(t)\tilde{\mu}-f(t,0)$.
Note that
\begin{eqnarray*}
\bar B_{\inf}&=&-(e^{-\tilde{\mu}}+e^{\tilde{\mu}}-2)+\gamma \tilde{\mu}-\bar f_{\inf}\\&=&\tilde{\mu}(\gamma-\frac{e^{-\tilde{\mu}}+e^{\tilde{\mu}}-2+\bar f_{\inf}}{\tilde{\mu}}),
\end{eqnarray*}
thus we can choose $\tilde{\mu}\in(\mu,2\mu)$ such that $\bar B_{\inf}>0$.
Due to Lemma \ref{technical-lemma},  we can then find $A\in W^{1,\infty}(\R)$ such that $\essinf\limits_{t\in\R}(A^{\prime}+B)>0$.

Let $\psi(x,t)=e^{-\mu x}-e^{A(t)-\tilde\mu x}$.  Then for each $x$, $\psi(x,t)$ is absolutely continuous in $t$.
We claim that $\psi(x,t)$
 satisfies that for each $x\in\R$,
\begin{equation}\label{subsolution}
\pa_t\psi-H\psi-c(t)\pa_x\psi\leq \psi f(t,\psi)
\end{equation}
for   a.e. $t\in\R$.
Note that for each $x$, \begin{align}&\pa_t\psi-H\psi-c(t)\pa_x\psi-f(t,0)\psi\nonumber\\&=[-A^{\prime}(t)+e^{-\tilde{\mu}}
+e^{\tilde{\mu}}-2-c(t)\tilde{\mu}+f(t,0)]e^{A(t)-\tilde{\mu}x}\quad {\rm for}\quad a.e.\,\, t\in\R.\nonumber
\end{align}
Note also that
$$
f(t,0)-[A^{'}(t)-(e^{-\tilde \mu}+e^{\tilde \mu}-2)+c(t)\tilde \mu]<0\quad \forall\,\, t\in\R.
$$
For given $x\in\R$ and $t\in\R$ such that $\pa_t \psi(x,t)$ exists, if
 $\psi(x,t)\le 0$, then
\begin{align}&\pa_t\psi-H\psi-c(t)\pa_x\psi\nonumber\\&= f(t,0)\psi(x,t)+[-A^{\prime}(t)+e^{-\tilde{\mu}}
+e^{\tilde{\mu}}-2-c(t)\tilde{\mu}+f(t,0)]e^{A(t)-\tilde{\mu}x}\nonumber\\
&\le  f(t,0)\psi(x,t)\nonumber\\
& = \psi(x,t) f(t,\psi(x,t)).\nonumber
\end{align}
On the other hand, if $\psi(x,t)>0$, then   $x\geq (\tilde{\mu}-\mu)^{-1}A(t)$ and we have
$$\tilde M_0\psi^2e^{\tilde{\mu}x-A(t)}\leq \tilde M_0e^{(\tilde{\mu}-2\mu)x-A(t)}\leq \tilde M_0e^{-A(t)} \,\,\mbox{ for }\,x\geq\max(0,(\tilde{\mu}-\mu)^{-1}A(t)),\,t\in\R.$$
By adding a large constant $\al$ to $A(t)$, we have $(\tilde{\mu}-\mu)^{-1}A(t)>0$ and
\begin{equation}\label{inequality}
A^{\prime}(t)+B(t)\ge \tilde M_0\psi^2e^{\tilde{\mu}x-A(t)} \,\,\mbox{ for a.e. }\,t\in\R.
\end{equation}
This   implies
\begin{align}&\pa_t\psi-H\psi-c(t)\pa_x\psi\nonumber\\&= f(t,0)\psi(x,t)+[-A^{\prime}(t)+e^{-\tilde{\mu}}
+e^{\tilde{\mu}}-2-c(t)\tilde{\mu}+f(t,0)]e^{A(t)-\tilde{\mu}x}\nonumber\\
&\le  f(t,0)\psi(x,t)-\tilde M_0 \psi^2(x,t)\nonumber\\
& \le \psi(x,t) f(t,\psi(x,t)).\nonumber
\end{align}
Therefore, the claim holds.

Let $u^+_K(t)$ be the unique entire positive solution of
$$
\dot u= u(f(t,0)-K u).
$$
For $K\gg 1$, $\sup_{t\in\R} u^+_K(t)\ll \sup_{x\in\R,t\in\R} \psi (x,t)$, and $u^+_K(t)$ is a sub-solution of \eqref{main-eqn2}.
Note that for each $t$, there are $X_1(t)<X_2(t)$ such that
$u^+_K(t)=\psi(x-\int_0^t c(\tau)d\tau,t)$ for $x=X_i(t)$ ($i=1,2$),
$u^+_K(t)>\psi(x-\int_0^t c(\tau)d\tau,t)$ for $x<X_1(t)$ or $x>X_2(t)$, and
$u^+_K(t)<\psi(x-\int_0^t c(\tau)d\tau,t)$ for $X_1(t)<x<X_2(t)$. When $K\gg 1$, $X_2(t)-X_1(t)>1$.

Let $$
\underline \phi(x,t)=\begin{cases}
\psi(x,t),\quad x\ge X_1(t)-\int_0^t c(\tau)d\tau\cr
u^+_K(t),\quad x<X_1(t)-\int_0^t c(\tau)d\tau
\end{cases}
$$
and $\underline v(x,t)=\underline\phi(x-\int_0^ tc(\tau)d\tau,t)$.
By the similar arguments as in the construction of $\bar\phi$,  $u(x,t)= e^{Ct}\underline{v}(\cdot,t)$  satisfies
 $$
 u(x,t)\le  u(x,s)+\int_s^t \Big(H u(x,\tau)+C u(x,\tau)+u(x,\tau) f(\tau,\underline v(x,\tau))\Big)d\tau
 $$
 and
 $$
 u(x,t;s,\underline v(\cdot,s))\ge \underline v(x,t)
 $$
 for all $x\in\R$ and $t\ge s$. It is clear that $\underline \phi(x,t)$ satisfies (1)-(3). The lemma is thus proved.
\end{proof}

\begin{remark}
\label{super-solu-rk1}
(1) If $f(t,u)\equiv f(u)$, then $\bar \phi(x,t)\equiv \bar \phi(x)$.

\smallskip

(2) If $f(t,u)=f(t+T,u)$, then $\bar \phi(x,t+T)=\bar \phi(x,t)$.
\end{remark}


\begin{lemma}\label{zero-number-lemma}
Suppose that $u(t)=\{u_j(t)\}\in l^\infty(\Z)$ and $v(t)=\{v_j(t)\}\in l^\infty(\Z)$ are nonnegative solutions of \eqref{main-eqn} on $[t_0,\infty)$
and $u_j(t_0)\not\equiv v_j(t_0)$.
If there is $j_0$ such that $u_{j}(t_0)\ge v_j(t_0)$ for $j\le j_0$ and $u_j(t_0)\le v_j(t_0)$ for $j>j_0$, then
for any $t>t_0$, there is $j_t\in \Z\cup\{-\infty,\infty\}$ such that $u_j(t)\ge v_j(t)$ for $j\le j_t$ and $u_j(t)\le v_j(t)$ for
$j>j_t$. Moreover, if  $u_{i_t}(t)=v_{i_t}(t)$ for some $i_t\in\Z$ and $t\in (t_0,\infty)$,
then $u_j(t)\ge v_j(t)$ for $j\le i_t$ and $u_j(t)\le v_j(t)$ for $j>i_t$.
\end{lemma}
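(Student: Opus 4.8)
The plan is to reduce the assertion to a discrete variation-diminishing property for the linear lattice equation satisfied by the difference. Set $w_j(t):=u_j(t)-v_j(t)$. Since $u$ and $v$ are bounded nonnegative solutions, the mean value theorem applied to $s\mapsto s\,f(t,s)$ yields, using (H0)--(H2), coefficients $a_j(t)$ bounded uniformly in $j\in\Z$ and (measurably in) $t$ for which
$$
\dot w_j(t)=w_{j+1}(t)+w_{j-1}(t)-2w_j(t)+a_j(t)\,w_j(t),\qquad j\in\Z,\ t\ge t_0 .
$$
This is a cooperative lattice system, so the comparison principle of Proposition \ref{comparison} applies to it (its proof goes through verbatim), and moreover it obeys a strong maximum principle: if $w(t_0)\ge 0$ with $w(t_0)\not\equiv 0$, then $w_j(t)>0$ for all $j$ and all $t>t_0$, because the discrete heat semigroup $e^{(t-t_0)H}$ has strictly positive entries and $\{a_j(t)\}$ is a bounded perturbation. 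The hypothesis on $u_j(t_0),v_j(t_0)$ says exactly that $j\mapsto w_j(t_0)$ has at most one sign change, from $\ge 0$ to $\le 0$; the conclusion is that this structure persists. Equivalently, I must rule out the appearance of a \emph{bad pair}: indices $p<q$ and a time $t>t_0$ with $w_p(t)<0<w_q(t)$.

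The tool I would exploit is that the Green's function $G_{jk}(t,t_0)$ of the linear equation above is totally positive of order two: $G_{jk}(t,t_0)\ge 0$, and $G_{jk}(t,t_0)G_{j'l}(t,t_0)\ge G_{jl}(t,t_0)G_{j'k}(t,t_0)$ whenever $j<j'$ and $k<l$, for each $t>t_0$. Granting this, decompose the initial datum at $j_0$: $w_j(t_0)=\phi^+_j-\psi_j$ with $\phi^+_j=\max(w_j(t_0),0)$ (supported in $j\le j_0$) and $\psi_j=\max(-w_j(t_0),0)\ge 0$ (supported in $j>j_0$). Then $w_j(t)=P_j(t)-Q_j(t)$ where $P_j(t)=\sum_{k\le j_0}G_{jk}(t,t_0)\phi^+_k\ge 0$ and $Q_j(t)=\sum_{k>j_0}G_{jk}(t,t_0)\psi_k\ge 0$, and, suppressing the arguments $(t,t_0)$ of $G$,
$$
P_{j+1}(t)Q_j(t)-P_j(t)Q_{j+1}(t)=\sum_{k\le j_0<l}\big(G_{j+1,k}G_{j,l}-G_{j,k}G_{j+1,l}\big)\,\phi^+_k\psi_l\le 0,
$$
every summand being $\le 0$ by the $TP_2$ inequality with rows $j<j+1$ and columns $k<l$ (here $k<l$ since $k\le j_0<l$). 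For $t>t_0$ the set $\{j:Q_j(t)>0\}$ is either all of $\Z$ or (if $\psi\equiv 0$) empty, by the strong maximum principle; hence $j\mapsto P_j(t)/Q_j(t)$ is non-increasing wherever defined, so $\{j:w_j(t)>0\}=\{j:P_j(t)>Q_j(t)\}$ is a left half-interval and no bad pair exists. Taking $j_t:=\sup\{j:w_j(t)>0\}$ (with the obvious conventions when $\phi^+\equiv 0$ or $\psi\equiv 0$) gives the required interface index. For the ``moreover'': if $w_{i_t}(t)=0$ with $w(t)$ not sign-definite, then $\phi^+\not\equiv 0$ and $\psi\not\equiv 0$, so $P_{i_t}(t)=Q_{i_t}(t)>0$; the non-increasing ratio $P/Q$ thus equals $1$ at $i_t$, so it is $\ge 1$ for $j\le i_t$ and $\le 1$ for $j\ge i_t$, i.e. $w_j(t)\ge 0$ for $j\le i_t$ and $w_j(t)\le 0$ for $j\ge i_t$, and $i_t$ itself serves as an interface index.

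The crux, and the only step that is not routine bookkeeping, is the rigorous proof that $G(t,t_0)$ is $TP_2$ on the bi-infinite lattice with merely $L^\infty_{\mathrm{loc}}$-in-$t$ coefficients. I would argue by time-discretization: over a short step the evolution is, up to an error tending to $0$, the composition of the discrete heat semigroup $e^{\Delta t\,H}$ with a positive diagonal multiplier $\operatorname{diag}\!\big(e^{\Delta t\,a_j(t_i)}\big)$; the Toeplitz kernel $e^{-2\Delta t}I_{j-k}(2\Delta t)$ of $e^{\Delta t H}$ is totally positive because the modified Bessel sequence $\{I_n\}_{n\in\Z}$ is log-concave (Tur\'an's inequality), hence a P\'olya frequency sequence, while a positive diagonal matrix is trivially $TP_2$; both nonnegativity and $TP_2$ are preserved under matrix products (Cauchy--Binet, the entries decaying fast enough for the infinite sums to be harmless) and under passage to the limit, whence $G(t,t_0)$ is $TP_2$. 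A more self-contained alternative that avoids total positivity altogether is a Matano--Angenent-type zero-number argument: suppose a new sign change of $w(\cdot,t)$ first appears at some time $\tau>t_0$ (using continuity of $t\mapsto w_j(t)$ to see the set of ``good'' times is closed), and extract a contradiction from the strong maximum principle applied at the resulting tangency configuration at $\tau$. I expect either route to carry essentially all the weight of the proof, the reductions in the first two paragraphs being routine.
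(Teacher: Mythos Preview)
Your approach is correct in outline but genuinely different from the paper's. The paper does not invoke total positivity or Green's functions at all. Instead it argues in three short steps: (i) dispose of the degenerate cases ($u_j(t_0)=v_j(t_0)$ for all $j\le j_0$, or for all $j>j_0$) by the comparison principle; (ii) in the nondegenerate case, use continuity in $t$ at the strict crossing together with a one-sided comparison (fixing a ``barrier'' index where the inequality is strict) to obtain strict separation $u_j(t)>v_j(t)$ for $j\le j_0$ and $u_j(t)<v_j(t)$ for $j>j_0$ on a short interval $(t_0,t_0+\epsilon]$, and then simply \emph{cite} \cite[Lemma~4]{HaZi93} (Hankerson--Zinner) for the persistence of the single sign change; (iii) for the ``moreover'' clause, argue by contradiction via an elementary ODE tangency argument at the index $i_t$: if $w_{i_t}(t)=0$ with $w_{i_t-1}(t)<0$ and $w_{i_t+1}(t)\le 0$, then $\dot w_{i_t}(t)<0$, forcing $w_{i_t}(t-\epsilon)>0$ and hence $w_{i_t-1}(t-\epsilon)>0$, contradicting $w_{i_t-1}(t)<0$.

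What each buys: the paper's route is short and elementary but outsources the heart of the matter to \cite{HaZi93}. Your $TP_2$ argument is self-contained and conceptually clean---the monotone ratio $P/Q$ yields both the main statement and the ``moreover'' clause at once---but the weight shifts to proving $TP_2$ for the nonautonomous Green's function on the bi-infinite lattice with only $L^\infty_{\rm loc}$-in-$t$ coefficients. Your time-discretization sketch (Tur\'an for $\{I_n\}$, Cauchy--Binet for products, limit) is the right idea, but as you acknowledge it is the one step requiring real care: you would need to control the discretization error uniformly in $j$ and justify the infinite Cauchy--Binet sums. The Matano--Angenent alternative you mention is in fact closer in spirit to what \cite{HaZi93} does and to the paper's step~(iii).
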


\begin{proof}
If $u_j(t_0)=v_j(t_0)$ for all $j\le j_0$, then $u_j(t_0)\le v_j(t_0)$ for all $j\in\Z$. By Proposition \ref{comparison},
$u_j(t)< v_j(t)$ for all $t>t_0$ and $j\in Z$. The lemma then follows.

Similarly, if $u_j(t_0)=v_j(t_0)$ for all $j>j_0$, then $u_j(t)>v_j(t)$ for all $t>t_0$ and $j\in\Z$. The lemma also follows.

Assume that there are $j_1\le j_0$ and $j_2>j_0$ such that $u_{j_1}(t_0)>v_{j_1}(t_0)$ and $u_{j_2}(t_0)<v_{j_2}(t_0)$. Without loss of generality, we may assume that $j_1=j_0$. Then there is $\epsilon>0$ such that $u_{j_0}(t)>v_{j_0}(t)$ for $t_0\le t\le t_0+\epsilon$.
It follows from the arguments of Proposition \ref{comparison}(1) that
$$
u_j(t)>v_j(t) \quad {\rm for}\quad t\in (t_0,t_0+\epsilon],\,\,\, j\le j_0,
$$
and
$$
u_j(t)<v_j(t)\quad {\rm for}\quad t\in (t_0,t_0+\epsilon],\,\,\, j>j_0.
$$
By \cite[Lemma 4]{HaZi93}, for any $t>t_0$, there is $j_t\in \Z\cup\{-\infty,\infty\}$ such that $u_j(t)\ge v_j(t)$ for $j\le j_t$ and $u_j(t)\le v_j(t)$ for
$j>j_t$.

 Moreover, suppose that $u_{i_t}(t)=v_{i_t}(t)$ for some $t\in (t_0,\infty)$ and $i_t\in\Z$. We claim that $u_j(t)\ge v_j(t)$ for $j\le i_t$. For otherwise, assume that there is $j^*<i_t$ such that
$u_{j^*}(t)<v_{j^*}(t)$. Then $j_t\le j^*$ and $u_j(t)\le v_j(t)$ for $j^*\le j\le i_t$. Without loss of generality, we may assume that
$u_{i_t-1}(t)<v_{i_t-1}(t)$. Then $\dot u_{i_t}(t)<\dot v_{i_t}(t)$ and hence
$$
u_{i_t}(t-\epsilon)>v_{i_t}(t-\epsilon)\quad {\rm for}\quad 0<\epsilon\ll 1.
$$
Then $j_{t-\epsilon}>i_t$ and $u_{i_t-1}(t-\epsilon)>v_{i_t-1}(t-\epsilon)$ for $0<\epsilon\ll 1$. This implies that
$u_{i_t-1}(t)\ge v_{i_t-1}(t)$, which is a contradiction. Therefore, $u_j(t)\ge v_j(t)$ for $j\le i_t$.
Similarly, we can prove that $u_j(t)\le v_j(t)$ for $j> i_t$.
The lemma is thus proved.
\end{proof}

Next, we prove Theorem \ref{main-thm2}.

\begin{proof}[Proof of Theorem \ref{main-thm2}]

(1) Let $c(t)$, $\bar{v}$, $\underline{v}$ be as in Lemma \ref{subsuper-solu-lemma} with $\bar c_{\inf}=\gamma$. For $\tau \geq 0$, let $v^\tau(x,t)$, $t\geq -\tau$ be the solution of
\begin{equation}
\left\{\begin{split} &\pa_tv-Hv=v f(t,v), \quad\quad\quad\quad\, x\in \R,\, t> -\tau,
\\
&v(x,-\tau)=\bar{v}(x,-\tau),\quad\quad\quad\quad x\in\R
 \end{split}
 \right.
\end{equation}
and
let $v_\tau(x,t)$, $t\geq -\tau$ be the solution of
\begin{equation}
\left\{\begin{split} &\pa_tv-Hv=v f(t,v), \quad\quad\quad\quad\, x\in \R,\, t> -\tau,
\\
&v(x,-\tau)=\underline v (x,-\tau),\quad\quad\quad\quad x\in\R.
 \end{split}
 \right.
\end{equation}
By Lemma \ref{subsuper-solu-lemma}, we have
$$
\underline{v}(x,t)\leq v_\tau(x,t)\le v^\tau(x,t)\leq \bar{v}(x,t)\quad \forall\,\, x\in\R,\,\, t\ge -\tau.
$$ Since $\bar{v}(x, t)$ is nonincreasing in $x$,  $v^\tau(x,t)$ is nonincreasing in $x$.
Moreover,  if $\tau_2>\tau_1\geq 0$, then
$$
v_{\tau_1}(x,t)\le v_{\tau_2}(x,t)\le v^{\tau_2}(x,t)\leq v^{\tau_1}(x,t)
$$ for all $(x,t)\in\R\times[-\tau_1,\infty]$. Therefore $\lim_{\tau\to\infty}v_\tau(x,t)$ and
 $\lim_{\tau\to\infty}v^\tau(x,t)$ exist, and  $\lim_{\tau\to\infty}v_\tau(x,t)$ is lower-semicontinuous  and
 $\lim_{\tau\to\infty}v^\tau(x,t)$ is upper-semicontinuous.
Let  $v^\pm:\,\R\times\R\rightarrow\R$ be such that
$$\lim_{\tau\to\infty}v_\tau(x,t)=v^-(x,t)\,\,\mbox{ pointwise in }\,(x,t)\in\R\times\R$$
and
$$\lim_{\tau\to\infty}v^\tau(x,t)=v^+(x,t)\,\,\mbox{ pointwise in }\,(x,t)\in\R\times\R.$$
Then
\begin{equation}
\label{lower-upper-cont}
\liminf_{(x,t)\to (x_0,t_0)}v^-(x,t)\ge v^-(x_0,t_0),\quad \limsup_{(x,t)\to (x_0,t_0)}v^+(x,t)\le v^+(x_0,t_0)
\end{equation}
for any $(x_0,t_0)\in\R\times\R$.
Since we have for any $x\in \R$ and $t\geq -\tau$,
$$v_\tau(x,t)=v_\tau(x,0)+\int^t_0Hv_\tau(x,s)ds+\int^t_0 v_\tau f(s,v_\tau)ds$$
and
$$v^\tau(x,t)=v^\tau(x,0)+\int^t_0Hv^\tau(x,s)ds+\int^t_0 v^\tau f(s,v^\tau)ds.$$
Let $\tau\to\infty$, it follows from the dominated convergence theorem that for any $x\in\R$ and $t\in\R$,
$$v^\pm(x,t)=v^\pm(x,0)+\int^t_0Hv^\pm(x,s)ds+\int^t_0 v^\pm f(s,v^\pm)ds.$$
Then we find that $v^\pm (x,t)$ is differentiable in $t$ and satisfies
\begin{equation*} \pa_tv^\pm =Hv^\pm+v^\pm f(t,v^\pm), \quad\quad x\in \R,\, t\in\R.\end{equation*}

By Lemma \ref{subsuper-solu-lemma}, we have
$$
\underline v(x,t)\le v^-(x,t)\le v^+(x,t)\le\bar v(x,t)\quad \forall\,\, x\in\R,\,\, t\in\R.
$$
 This implies that $\rho(v^-(\cdot,t),v^+(\cdot,t))$ is bounded in $t$, and if $v^-(x,t)\not\equiv v^+(x,t)$, then
 $u_1(x,t)=v^-(x,t)$ and $u_2(x,t)=v^+(x,t)$ satisfy the conditions in Proposition \ref{part-metric}(3).
 Assume that $v^-(x,t)\not\equiv v^+(x,t)$. Then
by Proposition \ref{part-metric}(3), we have
that for any $\tau>0$ and $T\in\R$, there is $\delta>0$ such that
$$
\rho(v^-(\cdot,s+\tau),v^+(\cdot,s+\tau))<\rho(v^-(\cdot,s),v^+(\cdot,s))-\delta
$$
for $s\le T$. This implies that
\begin{align*}
\rho(v^-(\cdot,-n),v^+(\cdot,-n))&=\rho(v^-(\cdot,-(n+\tau)+\tau),v^+(\cdot,-(n+\tau)+\tau))\\
&<\rho(v^-(\cdot,-(n+\tau)),v^+(\cdot,-(n+\tau)))-\delta\\
&<\rho(v^-(\cdot,-(n+k\tau)),v^+(\cdot,-(n+k\tau)))-k\delta
\end{align*}
for all $n\in\N$ with $n\ge -T$ and $k\in \N$. This is a contradiction since $\rho(v^-(\cdot,-(n+k\tau)),v^+(\cdot,-(n+k\tau)))-k\delta<0$
for $k\gg 1$. Therefore,
$$
v^-(x,t)=v^+(x,t).
$$
Then by \eqref{lower-upper-cont},  $v(x,t):=v^+(x,t)$ is continuous in $x\in\R$ and $t\in\R$ and is nonincreasing in $x$.
 Moreover, we have $$\lim\limits_{x\to\infty}\frac{v(x+\int^t_0c(\tau)d\tau,t)}{e^{-\mu x}}=1\quad\mbox{ uniformly in }\,t\in\R.$$
 By Lemma \ref{subsuper-solu-lemma},
$$
\delta_0:=\inf_{x\le 0,t\in\R} v(x+\int_0^t c(\tau)d\tau,t)>0.
$$
By Theorem \ref{stable-thm}(1) and Proposition \ref{convergence-prop}, we have
 $$\lim\limits_{x\to-\infty}(v(x+\int^t_0c(\tau)d\tau,t)-u^+(t))=0\quad\mbox{ uniformly in }\,t\in\R.$$
Then we can get the desired function $\phi(x,t)=v(x+\int^t_0c(\tau)d\tau,t)$.

 (2) By Theorem \ref{main-thm1}, $\tilde c_*\ge \tilde c_0^-$.  Assume that $\tilde c_*>\tilde c_0^-$.
Fix  $\gamma$, $c^{'}$, and $c^{''}$ such that
$$
\tilde c_0^-<\gamma< c^{'}< c^{''}<\tilde c_*.
$$
Observe that $\tilde c_0^->0$. By Theorem \ref{main-thm1}, for any $u^0\in l_0^\infty(\Z)$,
\begin{equation}
\label{aux-new-eq1}
\limsup_{|i|\le  c^{''} t,t\to\infty} |u_i(t+s;s,u^0)-u^+(t+s)|=0
\end{equation}
uniformly in $s\in\R$. 

Let $\phi(x,t)=v(x+\int^t_0c(\tau)d\tau,t)$ be as in (1).
Let
$$
u^s_i=\phi(i+[\int_0^s c(\tau)d\tau],s)\quad \forall\,\, s\in\R.
$$
By (1), there is $u^0\in l_0^\infty(\Z)$ such that
\begin{equation*}
u^0\le u^s\quad \forall s\in\R.
\end{equation*}
Hence
\begin{equation*}
u_i(t;s,u^0)\le u_i(t;s,u^s)\quad \forall\,\, i\in\Z, \,\, s\in\R,\,\, t\ge s.
\end{equation*}
This together with \eqref{aux-new-eq1} implies that
\begin{equation}
\label{aux-new-eq2}
\limsup_{|i|\le c^{''}t,t\to\infty}|u_i(t+s;s,u^s)-u^+(t+s)|=0
\end{equation}
uniformly in $s\in\R$.

By (1) again,
$$
u_i(t;s,u^s)=\phi(i-\int_0^t c(\tau)d\tau+[\int_0^s c(\tau)d\tau)],t)\le \phi(i-\int_s^t c(\tau)d\tau-1,t)
$$ 
and then
\begin{equation}
\label{aux-new-eq3}
\limsup_{i\ge (c^{''}-c^{'}) t+\int_s^{t+s} c(\tau) d\tau,t\to\infty}u_i(t+s;s,u^s)=0
\end{equation}
uniformly in $s\in\R$. It follows from \eqref{aux-new-eq2} and \eqref{aux-new-eq3} that
$$
\bar c_{\inf}\ge c^{'}>\gamma,
$$
which is a contradiction.
Therefore,  $\tilde c_*=\tilde c_0^-$.

\end{proof}

\begin{remark}
\label{super-solu-rk2}
(1) If $f(t,u)\equiv f(u)$, then $\bar \phi(x, t)\equiv \bar \phi(x)$. We claim that
 $\phi(x,t)\equiv \phi(x)$. In fact, when $f(t,u)\equiv f(u)$, we have
 $$
 \int_{t_1}^{t_2} c(s)ds=\int_0^{t_2-t_1} c(s)ds\quad \forall\,\, t_1,t_2\in\R
 $$
 and
 $$
 u(x,t;s,u_0)=u(x,0;s-t,u_0)\quad \forall\,\, t\ge s,\,\, u_0\in l^{\infty,+}(\R).
 $$
 We then have
 \begin{align*}
 \phi(x,t)&=v^+(x+\int_0^t c(\tau)d\tau,t)\\
 &=\lim_{\tau\to\infty} v^\tau(x+\int_0^ t c(s)ds,t)\\
 &=\lim_{\tau\to \infty} u(x+\int_0^t c(s)ds,t;-\tau,\bar v(\cdot,-\tau))\\
 &=\lim_{\tau\to \infty} u(x,t;-\tau,\bar\phi(\cdot+\int_0^ tc(s)ds-\int_0^{-\tau} c(s)ds))\\
 &=\lim_{\tau\to\infty} u(x,t;-\tau,\bar \phi(\cdot-\int_0^{-(t+\tau)}c(s)ds))\\
 &=\lim_{\tau\to\infty} u(x,0;-(t+\tau),\bar\phi(\cdot-\int_0^{-(t+\tau)}c(s)ds))\\
 &= v^+(x,0)=\phi(x)\quad \forall\,\, t\in\R.
 \end{align*}
 The claim thus follows.

(2) If $f(t,u)=f(t+T,u)$, then $\bar \phi(x,t+T)=\bar\phi(x,t)$. We claim that
$\phi(x,t+T)=\phi(x,t)$. In fact, when $f(t+T,u)=f(t,u)$, we have
$$
\int_{T}^{t+T} c(s)ds=\int_0^t c(s) ds\quad \forall\,\, t\in\R
$$
and
$$
u(x,t+mT;nT,u_0)=u(x,t;(n-m)T,u_0)\quad \forall \,\, n,m\in\Z.
$$
We then have
\begin{align*}
\phi(x,t+T)&=v^+(x+\int_0^{t+T} c(s)ds,t+T)\\
&=\lim_{n\to\infty} v^{nT}(x+\int_0^{t+T} c(s)ds,t+T)\\
&=\lim_{n\to\infty} u(x+\int_0^{t+T} c(s)ds,t+T;-nT,\bar v(\cdot,-nT))\\
&=\lim_{n\to\infty} u(x+\int_0^{t+T} c(s)ds,t+T;-nT,\bar\phi(\cdot-\int_0^{-nT} c(s)ds,-nT))\\
&=\lim_{n\to\infty}u(x+\int_0^t c(s)ds,t+T;-nT,\bar\phi(\cdot+\int_0^T c(s)ds-\int_0^{-nT} c(s)ds,-nT))\\
&=\lim_{n\to\infty} u(x+\int_0^ tc(s)ds,t;-(n+1)T,\bar\phi(\cdot-\int_0^{-(n+1)T} c(s)ds, -(n+1)T))\\
&=v^+(x+\int_0^ tc(s)ds,t)=\phi(x,t)\quad\forall\,\,t\in\R.
\end{align*}
The claim thus also holds.
\end{remark}

We now prove Theorem \ref{main-thm3}.

\begin{proof}[Proof of Theorem \ref{main-thm3}]
First of all, let $\mu^*$ be as in Lemma \ref{mu-star-lemma}. For given $0<\mu\le \mu^*$, let
$c_\mu(t)=\frac{e^{-\mu}+e^{\mu}-2+f(t,0)}{\mu}$,
 $\varphi_\mu(x)=e^{-\mu x}$,
and $\bar\phi_\mu(x,t)=\min\{\varphi_\mu(x),u^+(t)\}$. Let
$$
\bar v_\mu(x,t)=\bar\phi_\mu(x-\int_0^t c_\mu(s)ds,t).
$$
By Theorem \ref{main-thm2}, for given $0<\mu<\mu^*$, for each $t\in\R$,
$$
v_\mu(x,t):=\lim_{\tau\to \infty}u(x,t;-\tau, \bar v_\mu(\cdot,-\tau))
$$
uniformly in $x\in \R$, and
\begin{equation}
\label{new-eq0}
\lim_{x\to -\infty} v_\mu(x+\int_0^t c_\mu(s)ds,t)=u^+(t),\quad \lim_{x\to\infty} v_\mu(x+\int_0^tc_\mu(s)ds,t)=0
\end{equation}
uniformly in $t\in\R$.

Next, for given $0<\mu<\mu^*$, $n\in\N$, and $t>-n$, let $x(\mu,t,n)$ be such that
$$
u(x(\mu,t,n),t;-n,\bar\phi_\mu(\cdot,-n))=u(0,t;-n,\bar\phi_\mu(\cdot+x(\mu,t,n),-n))=\frac{u^+(t)}{2}.
$$
Note that
$$
v_\mu(x+\int_0^t c_\mu(s)ds,t)=\lim_{n\to\infty} u(x,t;-n,\bar\phi_{\mu}(\cdot+\int_{-n}^t c_\mu(s)ds,-n)).
$$
By \eqref{new-eq0}, we have that there is $M>0$ such that for any $t\in\R$,
\begin{equation}
\label{new-eq0-1}
|x(\mu,t,n)-\int_{-n}^t c_\mu(s)ds|\le M\quad \forall \,\, n\gg 1.
\end{equation}
Moreover, for any $\epsilon>0$, there is $\tilde M_\epsilon>0$ such that for any $t\in\R$,
\begin{equation}
\label{new-eq0-2}
J^+(t,\mu,n)-J^-(t,\mu,n)<\tilde M_\epsilon\quad \forall\,\, n\gg 1,
\end{equation}
where $J^\pm(t,\mu,n)$ are such that
$$
u(j+x(\mu,t,n),t;-n,\bar\phi_\mu(\cdot,-n))\begin{cases} \ge u^+(t)-\epsilon\quad {\rm for}\quad j\le J^-(t,\mu,n)\cr
\le \epsilon\quad {\rm for}\quad {\rm for}\quad j\ge J^+(t,\mu,n).
\end{cases}
$$

\smallskip

Now, for given $n\in\N$ and $0<\mu<\mu^*$, there are $x(\mu^*,n)$ and $x(\mu,n):=x(\mu,0,n)$ such that
\begin{equation}
\label{new-eq1}
u(x(\mu^*,n),0;-n,\bar\phi_{\mu^*}(\cdot,-n))=\frac{u^+(0)}{2},\quad u(x(\mu,n),0;-n,\bar\phi_\mu(\cdot,-n))=\frac{u^+(0)}{2}.
\end{equation}
Note that
$$
u(x+x(\mu^*,n),t;-n,\bar \phi_{\mu^*}(\cdot,-n))=u(x,t;-n,\bar\phi_{\mu^*}(\cdot+x(\mu^*,n),-n))
$$
and
$$
u(x+x(\mu,n),t;-n,\bar\phi_\mu(\cdot,-n))=u(x,t;-n,\bar\phi_\mu(\cdot+x(\mu,n),-n))
$$
for $t\ge -n$ and $x\in\R$. Note also that there is $j_n\in\Z$ such that
\begin{equation}
\label{new-eq2}
\bar\phi_{\mu^*}(j+x(\mu^*,n),-n)\begin{cases} \ge \bar\phi_\mu(j+x(\mu,n),-n)\quad {\rm for}\quad j\le j_n\cr
< \bar\phi_\mu(j+x(\mu,n),-n)\quad {\rm for}\quad j>j_n.
\end{cases}
\end{equation}
By \eqref{new-eq1}, \eqref{new-eq2}, and Lemma \ref{zero-number-lemma},
\begin{equation}
\label{new-eq3}
u(j+x(\mu^*,n),0;-n,\bar\phi_{\mu^*}(\cdot,-n))\begin{cases} \ge u(j+x(\mu,n),0;-n,\bar\phi_\mu(\cdot,-n))\quad {\rm for}\quad j\le 0\cr
\le u(j+x(\mu,n),0;-n,\bar\phi_\mu(\cdot,-n))\quad {\rm for}\quad j> 0.
\end{cases}
\end{equation}
Note that there is $n_k\to\infty$ such that $\lim_{n_k\to\infty} u(j+x(\mu^*,n_k),0;-n_k,\bar\phi_{\mu^*}(\cdot,-n_k))$ exists for all
$j\in\Z$.
Without loss of generality, we may assume  that  $\lim_{n_k\to\infty} u(j+x(\mu^*,n_k),-m$; $-n_k,\bar\phi_{\mu^*}(\cdot,-n_k))$ exists for all
$j\in\Z$ and $m\in\N$.
Let
$$
u_j^{-m,*}=\lim_{n_k\to\infty} u(j+x(\mu^*,n_k),-m;-n_k,\bar\phi_{\mu^*}(\cdot,-n_k))\quad\forall j\in\Z.
$$
By Proposition \ref{convergence-prop},
$$
u(j,0;-m,u^{-m,*})=\lim_{n_k\to\infty}  u(j+x(\mu^*,n_k),0;-n_k,\bar\phi_{\mu^*}(\cdot,-n_k))=u_j^{0,*}\quad\forall j\in\Z,\,\, m\in\N,
$$
and then
$$
u(j,t;0,u^{0,*})=\lim_{n_k\to \infty}  u(j+x(\mu^*,n_k),t;-n_k,\bar\phi_{\mu^*}(\cdot,-n_k))\quad\forall j\in\Z,\quad t\in\R.
$$
Hence $u(j,t;0,u^{0,*})$ is an entire solution of \eqref{main-eqn}.
It is clear that $u(j,t;0,u^{0,*})$ is nonincreasing in $j\in\Z$.

We claim that $u(j,t;0,u^{0,*})$ is a transition front solution satisfying the properties in Theorem \ref{main-thm3}.

To prove the claim,  we first prove that
 \begin{equation}
\label{new-eq4}
\lim_{j\to -\infty}u(j,t;0,u^{0,*})=u^+(t),\quad \lim_{j\to \infty}u(j,t;0,u^{0,*})=0\quad \forall\,\, t\in\R.
\end{equation}
 Note that without loss of generality, we may assume $\lim_{n_k\to\infty}u(j,0$; $-n_k,\bar\phi_\mu(\cdot+x(\mu,n_k),-n_k))$
exists for all $j\in\Z$. Let
$$
\tilde v^\mu_j=\lim_{n_k\to\infty}u(j,0;-n_k,\bar\phi_\mu(\cdot+x(\mu,n_k),-n_k))\quad\forall\,\, j\in\Z.
$$
By  Proposition \ref{convergence-prop} again,
$$
\tilde v_\mu(j,t):=u(j,t;0,\tilde v^\mu)=\lim_{n_k\to\infty}u(j,t;-n_k,\bar\phi_\mu(\cdot+x(\mu,n_k),-n_k))\quad\forall\,\, j\in\Z,\,\,\, t\in\R.
$$
Note that
$$
v_\mu(x,t)=\lim_{n\to\infty} u(x,t;-n,\bar\phi_\mu(\cdot-\int_0^{-n}c_\mu(\tau)d\tau,-n))\quad \forall \,\, x\in\R.
$$
By \eqref{new-eq1},
$$
\tilde v_\mu(0,0)=\frac{u^+(0)}{2}.
$$
By \eqref{new-eq0-1}, $x(\mu,n_k)+\int_0^{-n_k} c_\mu(\tau)d\tau$ is bounded. Hence
$$
\lim_{j\to -\infty} \tilde v_\mu(j,0)=u^+(0),\quad \lim_{j\to\infty} \tilde v_ \mu(j,0)=0.
$$
It then follows from the monotonicity of $\tilde v_\mu(j,t)$ in $j\in\Z$ that
$$
\lim_{j\to -\infty} \tilde v_\mu(j,t)=u^+(t),\quad \lim_{j\to\infty} \tilde v_ \mu(j,t)=0\quad \forall \,\, t\in\R.
$$
This together with \eqref{new-eq3} implies that \eqref{new-eq4}.

Next, let $J(t)\in\Z$ be such that
$$
u(j,t;0,u^{0,*})\begin{cases} \ge \frac{u^+(t)}{2}\quad {\rm for}\quad j\le J(t)\cr
<\frac{u^+(t)}{2}\quad {\rm for}\quad j>J(t).
\end{cases}
$$
By \eqref{new-eq4}, $J(t)$ is well defined for each $t\in\R$. We prove that
\begin{equation}
\label{new-eq5}
\lim_{j\to -\infty}u(j+J(t),t;0,u^{0,*})=u^+(t),\quad \lim_{j\to \infty}u(j+J(t),t;0,u^{0,*})=0
\end{equation}
uniformly in $t\in\R$.
To end this, fix $t\in\R$. Note that there is
$x^*_{t,n_k}\in\R$ such that
$$
u(x^*_{t,n_k}+J(t)+x(\mu^*,n_k),t;-n_k,\bar\phi_{\mu^*}(\cdot,-n_k))=\frac{u^+(t)}{2}
$$
Then
$$
u(1+J(t)+x(\mu^*,n_k),t;-n_k,\bar \phi_{\mu^*}(\cdot,-n_k))<u(x^*_{t,n_k}+J(t)+x(\mu^*,n_k),t;-n_k,\bar\phi_{\mu^*}(\cdot,-n_k))
$$
for $k\gg 1$. Recall that
$$
u(x(\mu,t,n_k),t;-n_k,\bar\phi_\mu(\cdot,-n_k))=\frac{u^+(t)}{2}.
$$
By the similar arguments for \eqref{new-eq3},
\begin{align}
\label{new-eq6}
&u(j+x^*_{t,n_k}+J(t)+x(\mu^*,n_k),t;-n_k,\bar\phi_{\mu^*}(\cdot,-n_k))\nonumber\\
&\begin{cases}\ge u(j+x(\mu,t,n_k),t;-n_k,\bar\phi_\mu(\cdot,-n_k))\quad\forall\,\, j\le 0\cr
\le u(j+x(\mu,t,n_k),t;-n_k,\bar\phi_\mu(\cdot,-n_k))\quad \forall\,\, j>0.
\end{cases}
\end{align}
This together with \eqref{new-eq0-2} implies that  there is $J^*$ such that
$$
u(J^*+x^*_{t,n_k}+J(t)+x(\mu^*,n_k),t;-n_k,\bar\phi_{\mu^*}(\cdot,-n_k))<\frac{u^+(t)}{4}\quad \forall \,\, k\gg 1.
$$
It then follows that
$$
u(J(t)+x(\mu^*,n_k),t;-n_k,\bar \phi_{\mu^*}(\cdot,-n_k))>u(J^*+x^*_{t,n_k}+J(t)+x(\mu^*,n_k),t;-n_k,\bar\phi_{\mu^*}(\cdot,-n_k))
$$
for $k\gg 1$. Therefore
$$
x^*_{t,n_k}+J(t)-1+x(\mu^*,n_k)<J(t)+x(\mu^*,n_k)<J^*+x^*_{t,n_k}+J(t)+x(\mu^*,n_k)
$$
and then
\begin{align}
\label{new-eq7}
&u(j+J^*+x^*_{t,n_k}+J(t)+x(\mu^*,n_k),t;-n_k,\bar\phi_{\mu^*}(\cdot,-n_k))\nonumber\\
&\le u(j+J(t)+x(\mu^*,n_k),t;-n_k,\bar\phi_{\mu^*}(\cdot,-n_k))\nonumber\\
&\le u(j+x^*_{t,n_k}+J(t)-1+x(\mu^*,n_k),t;-n_k,\bar \phi_{\mu^*},-n_k))
\end{align}
for $k\gg 1$.
This together with \eqref{new-eq0-2} and \eqref{new-eq6} implies \eqref{new-eq5}.

We now prove that
\begin{equation}
\label{new-eq8}
\liminf_{t-s\to\infty}\frac{J(t)-J(s)}{t-s}=\tilde c_0^-.
\end{equation}
By Theorem \ref{main-thm1}, we have
$$
\liminf_{t-s\to\infty}\frac{J(t)-J(s)}{t-s}\ge \tilde c_0^-.
$$
Fix $s<t$ and $0<\mu<\mu^*$. By \eqref{new-eq6} and \eqref{new-eq7},
\begin{equation}
\label{new-eq9}
u(j+J(s)+x(\mu^*,n_k),s;-n_k,\bar\phi_{\mu^*}(\cdot,-n_k))\le  u(j-1+x(\mu,s,n_k),s;-n_k,\bar\phi_\mu(\cdot,-n_k))
\end{equation}
for $j\ge 2$ and $k\gg 1$. By \eqref{new-eq0-1}, without loss of generality, we may assume that there is $j_s$, which is bounded in $s$ such that
$$
\lim_{n_k\to\infty} u(j-1+x(\mu,s,n_k),s;-n_k,\bar\phi_\mu(\cdot,-n_k))=v_\mu (j+j_s+\int_0^s c_\mu(r)dr,s)\quad \forall\,\, j\in\Z.
$$
This together with \eqref{new-eq9} implies that
\begin{equation}
\label{new-eq10}
u(j+J(s),s;0,u^{0,*})\le v_\mu (j+j_s+\int_0^s c_\mu(r)dr,s)\quad \forall\,\, j\ge 2.
\end{equation}
Note that
$$
\inf_{s\in\R} v_\mu(1+j_s+\int_0^s c_\mu(r)dr,s)>0.
$$
Let
$$
\kappa =\begin{cases}\inf_{j\le 0}\frac{v_\mu(j_s+\int_0^s c_\mu(r)dr,s)}{u(j+J(s),s;0,u^{0,*})}\quad {\rm if}\quad u(1+J(s),s;0,u^{0,*})\le
v_\mu(1+j_s+\int_0^s c_\mu(r)dr,s)\cr
\cr
\inf_{j\le 1} \frac{v_\mu(1+j_s+\int_0^s c_\mu(r)dr,s)}{u(j+J(s),s;0,u^{0,*})}\quad {\rm if}\quad u(1+J(s),s;0,u^{0,*})>
v_\mu(1+j_s+\int_0^s c_\mu(r)dr,s).
\end{cases}
$$
Then $0<\kappa<1$ and
$$
\kappa u(j+J(s),s;0,u^{0,*})\le v_\mu (j+j_s+\int_0^s c_\mu(r)dr,s)\quad \forall j\in\Z.
$$
Observe that  for $t\ge s$,
$$
\kappa u(j+J(s),t;0,u^{0,*})\le u(j,t;s, \kappa  u(\cdot+J(s),s;0,u^{0,*}))\le v_\mu(j+j_s+\int_0^s c_\mu(r)dr, t)\quad \forall\, j\in\Z.
$$
This implies that there is $L>0$ independent of $s$ and $t$  such that
$$
J(t)-J(s)\le \int_s^t c_\mu(r)dr+L.
$$
It then follows that
$$
\liminf_{t-s\to\infty}\frac{J(t)-J(s)}{t-s}\le \liminf_{t-s\to\infty}\frac{\int_s^ tc_\mu(r)dr}{t-s}
$$
for any $0<\mu<\mu^*$. Then by Theorem \ref{main-thm2},
$$
\liminf_{t-s\to\infty}\frac{J(t)-J(s)}{t-s}\le \tilde c_0^-.
$$
Therefore, \eqref{new-eq8} holds and Theorem \ref{main-thm3} is thus proved.
\end{proof}


\bibliographystyle{amsplain}

\end{document}